\DeclareFontFamily{OMS}{rsfs}{\skewchar\font'60}
\DeclareFontShape{OMS}{rsfs}{m}{n}{<-5>rsfs5 <5-7>rsfs7 <7->rsfs10 }{}
\DeclareSymbolFont{rsfs}{OMS}{rsfs}{m}{n}
\DeclareSymbolFontAlphabet{\scr}{rsfs}
\newtheorem{theorem}{Theorem}[section]
\newtheorem{lemma}[theorem]{Lemma}
\newtheorem{proposition}[theorem]{Proposition}
\newtheorem{corollary}[theorem]{Corollary}
\newtheorem*{mainthm}{Main Theorem}
\newtheorem*{atheorem}{Theorem A}
\newtheorem*{ntheorem}{Theorem B}
\newtheorem*{ctheorem}{Theorem C}
\theoremstyle{definition}
\newtheorem{definition}[theorem]{Definition}
\newtheorem{example}[theorem]{Example}
\newtheorem{observation}[theorem]{Observation}
\theoremstyle{remark}
\newtheorem{remark}[theorem]{Remark}
\newtheorem{question}[theorem]{Question}
\newcommand{\sH}{\scr{H}}
\DeclareMathOperator{\Ann}{{Ann}}
\DeclareMathOperator{\divisor}{{div}}
\DeclareMathOperator{\Div}{{div}}
\DeclareMathOperator{\Hom}{Hom}
\DeclareMathOperator{\sHom}{{\sH}om}
\DeclareMathOperator{\Spec}{{Spec}}
\DeclareMathOperator{\Ext}{Ext}
\newcommand{\tld}{\widetilde }
\newcommand{\ba}{\mathfrak{a}}
\newcommand{\bm}{\mathfrak{m}}
\newcommand{\blank}{\underline{\hskip 10pt}}
\newcommand{\bF}{\mathbb{F}}
\newcommand{\bQ}{\mathbb{Q}}
\newcommand{\bZ}{\mathbb{Z}}
\newcommand{\sL}{\scr{L}}
\newcommand{\mydot}{{{\,\begin{picture}(1,1)(-1,-2)\circle*{2}\end{picture}\ }}}
\newcommand{\tensor}{\otimes}
\renewcommand{\O}{\mbox{$\mathcal{O}$}}
\begin{document}

\title{$F$-adjunction}
\author{Karl Schwede}

\thanks{The author was partially supported by a National Science Foundation postdoctoral fellowship.}
\address{Department of Mathematics\\ University of Michigan\\ East Hall
530 Church Street \\ Ann Arbor, Michigan, 48109}
\email{kschwede@umich.edu}
\subjclass[2000]{14B05, 13A35}
\keywords{F-pure, F-split, test ideal, log canonical, center of log canonicity, subadjunction, adjunction conjecture, different}
\begin{abstract}
In this paper we study singularities defined by the action of Frobenius in characteristic $p > 0$.  We prove results analogous to inversion of adjunction along a center of log canonicity.   For example, we show that if $X$ is a Gorenstein normal variety then to every normal center of sharp $F$-purity $W \subseteq X$ such that $X$ is $F$-pure at the generic point of $W$, there exists a canonically defined $\bQ$-divisor $\Delta_{W}$ on $W$ satisfying $(K_X)|_W \sim_{\bQ} K_{W} + \Delta_{W}$.  Furthermore, the singularities of $X$ near $W$ are ``the same'' as the singularities of $(W, \Delta_{W})$.  As an application, we show that there are finitely many subschemes of a quasi-projective variety that are compatibly split by a given Frobenius splitting.  We also reinterpret Fedder's criterion in this context, which has some surprising implications.
\end{abstract}

\maketitle

\section{Introduction}

Suppose that $X$ is a variety and $Y$ is an effective integral Weil divisor on $X$ such that $n(K_X + Y)$ is Cartier.  If the singularities of $X$ are mild (for example, if $X$ is Cohen-Macaulay and normal) one has a restriction theorem $\omega_X(Y) / \omega_X = \omega_Y$.  However $\O_X(n(K_X + Y))|_Y$ is not necessarily equal to $nK_Y$; there is an additional residue of $\O_X(n(K_X + Y))|_Y$ which (when divided by $n$) is called ``the different'', see \cite[Lemma 5-1-9]{KawamataMatsudeMatsuki} and \cite[Chapter 16]{KollarFlipsAndAbundance}.  Even when $Y$ is an arbitrary subvariety (that is, not a divisor) similar phenomena have been observed, see for example  \cite{KawamataSubadjunctionOne}, \cite{KawamataSubadjunction2}, \cite{KawakitaComparisonNonLCI} and \cite{EinMustataJetSchemesAndSings}.  In this paper we explore a related phenomenon in characteristic $p > 0$ which we call \emph{$F$-adjunction} (or Frobenius-adjunction).
In particular, we prove results very similar to the parts of what was known as the adjunction conjecture of Kawamata and Shokurov, see \cite{AmbroAdjunctionConjecture}, which relates the singularities of $X$ near a center of log canonicity $W \subseteq X$ with the singularities of $W$.

Suppose that $R$ is a Gorenstein (or a sufficiently nice log-$\bQ$-Gorenstein) normal $F$-finite ring.
Then to every center of sharp $F$-purity $Q \in \Spec R$ (centers of sharp $F$-purity are characteristic $p$ analogs of centers of log canonicity)
such that $R_Q$ is $F$-pure and $R/Q$ is normal we show that there exists a canonically defined $\bQ$-divisor $\Delta_{R/Q}$ on $\Spec R/Q$ such that
the singularities of $R$ near $Q$ are ``the same'' as the singularities of $(R/Q, \Delta_{R/Q})$.

A center of sharp $F$-purity is a characteristic $p > 0$ analog of a center of log canonicity; see for example \cite[Definition 1.3]{KawamataOnFujitasFreenessConjectureFor3Folds} and \cite{SchwedeCentersOfFPurity}.  Technically speaking, a possibly non-closed point $Q \in \Spec R$ is a \emph{center of sharp $F$-purity} if, for every $R$-linear map $\phi : R^{1 \over p^e} \rightarrow R$, we have $\phi(Q^{1/p^e}) \subseteq Q$.  In particular, if $\Spec R$ is $F$-split, then $\Spec R/Q$ is compatibly split with every Frobenius splitting of $\Spec R$.  Unfortunately, there may be infinitely many different maps that one needs to check to determine whether $Q$ is a center of sharp $F$-purity.  However, when $R$ is Gorenstein and sufficiently local, there exists a ``generating'' map $\psi : R^{1/p} \rightarrow R$ such that $Q$ is a center of sharp $F$-purity if and only if $\psi(Q^{1/p}) \subseteq Q$ for this single map $\psi$, see Proposition \ref{PropUniformlyFCompatIfAndOnlyIf}.  A similar result also holds when $R$ is $\bQ$-Gorenstein with index not divisible by $p > 0$.  It is the existence of this ``generating map'' that we use to prove our results.

We will now briefly outline the construction of $\Delta$ on $R/Q$.  On any scheme $X = \Spec R$ such that $R$ is a normal local ring of characteristic $p > 0$, there is a bijection of sets
\[
\left\{ \begin{matrix}\text{Effective $\bQ$-divisors $\Delta$ such }\\\text{that $(p^e - 1)(K_X + \Delta)$ is Cartier}\end{matrix} \right\} \leftrightarrow \left\{ \text{Non-zero elements of $\Hom_{\O_X}(F^e_* \O_X, \O_X)$} \right\} \Big/ \sim
\]
where the equivalence relation on the right identifies two maps $\phi$ and $\psi$ if there is a unit $u$ such that $\phi(u \times \blank) = \psi(\blank)$; see Theorem \ref{TheoremDivisorsInduceMaps}.  Statements related to this correspondence are well known and have appeared in several previous contexts, see \cite[Proof \#2 of Theorem 3.1]{HaraWatanabeFRegFPure} and \cite{MehtaRamanathanFrobeniusSplittingAndCohomologyVanishing}.  However, we do not think it has been explicitly described in the context of $\bQ$-divisors and singularities defined by Frobenius.

With this bijection in mind, assume $(p^e - 1)K_X$ is Cartier, then the divisor $0$ on $X = \Spec R$ determines a map $\phi \in \Hom_{\O_X}(F^e_* \O_X, \O_X)$.  Setting $W = \Spec R/Q$, the map $\phi$ can be restricted to a map $\phi_Q \in \Hom_{\O_W}(F^e_* \O_W, \O_W)$ precisely because $W$ is a center of sharp $F$-purity (the map is $\phi_Q$ is non-zero because $R_Q$ is $F$-pure).  But then $\phi_Q$ corresponds to a divisor $\Delta_{R/Q}$ on $W = \Spec R/Q$.

Once we have constructed $\Delta_{R/Q}$, we can relate the singularities of $X$ and $W$.  Roughly speaking, we can do this because the $F$-singularities of $R$ (respectively, the $F$-singularities of $R/Q$) can all be defined by the images of certain $\phi \in \Hom_{\O_X}(F^e_* \O_X, \O_X)$ (respectively $\phi_Q \in \Hom_{\O_W}(F^e_* \O_W, \O_W)$).  Some of these results are summarized below:

\begin{mainthm}[Theorem \ref{ThmFirstFAdjunction}, Corollary \ref{CorRestrictionTheoremForAdjoint2}, Remark \ref{RemarkGlobalGluing}]
\label{ThmMain}
Suppose that $X$ is an integral separated normal $F$-finite noetherian scheme essentially of finite type\footnote{The essentially finite type hypothesis can be removed if one is willing to work on sufficiently small affine chart or if $X$ is the spectrum of a local ring.} over an $F$-finite field of characteristic $p > 0$.  Further suppose that $\Delta$ is an effective $\bQ$-divisor on $X$ such that $K_X + \Delta$ is $\bQ$-Cartier with index not divisible by $p$.  Let $W \subseteq X$ be an closed subscheme that satisfies the following properties:
\begin{itemize}
\item[(a)]  $W$ is integral and normal.
\item[(b)]  $(X, \Delta)$ is sharply $F$-pure at the generic point of $W$.
\item[(c)]  The ideal sheaf of $W$ is locally a center of sharp $F$-purity for $(X, \Delta)$.
\end{itemize}
Then there exists a canonically determined effective divisor $\Delta_W$ on $W$ satisfying the following properties:
\begin{itemize}
\item[(i)]  $(K_W + \Delta_W) \sim_{\bQ} (K_X + \Delta)|_W$
\item[(ii)]  Furthermore, if $(p^e - 1)(K_X + \Delta)$ is Cartier then $(p^e - 1)(K_W + \Delta_W)$ is Cartier and $(p^e - 1)\Delta_W$ is integral.
\item[(iii)]  For any real number $t > 0$ and any ideal sheaf $\ba$ on $X$ which is does not vanish on $W$, we have that
    $(X, \Delta, \ba^t)$ is sharply $F$-pure near $W$ if and only if $(W, \Delta_{W}, {\overline \ba}^t)$ is sharply $F$-pure.
\item[(iv)]  $W$ is minimal among centers of sharp $F$-purity for $(X, \Delta)$, with respect to containment of topological spaces (in other words, the ideal sheaf of $W$ is of maximal height as a center of sharp $F$-purity), if and only if $(W, \Delta_{W})$ is strongly $F$-regular.
\item[(v)]  There is a natural bijection between the centers of sharp $F$-purity of $(W, \Delta_{W})$, and the centers of sharp $F$-purity of $(X, \Delta)$ which are properly contained in $W$ as topological spaces.
\item[(vi)]  There is a naturally defined ideal sheaf $\tau_{b}(X, \nsubseteq W; \Delta, \ba^t)$, which philosophically corresponds to an analog of an adjoint ideal in arbitrary codimension, such that $\tau_{b}(X, \nsubseteq W; \Delta, \ba^t)|_{W} = \tau_{b}(W; \Delta_{W}, \overline{\ba}^t) = \text{``the big test ideal of $(R, \Delta, \overline{\ba}^t$)"}$.  Here $\ba$ and $t > 0$ are as in (iii).
\end{itemize}
\end{mainthm}
\vskip 12pt
When the center $W$ is not a normal scheme, some of these results can still be lifted to the normalization of $W$, see Proposition \ref{PropositionPropertiesOfNormalizedRestrictedDelta}.  Also see the concluding remarks to this paper.  Part (vi) should be viewed as an ultimate generalization of the $F$-restriction theorems for test ideals found in \cite{TakagiPLTAdjoint} and \cite{TakagiHigherDimensionalAdjoint}, also compare with \cite[Theorem 4.9, Remark 4.10]{HaraWatanabeFRegFPure}.

The construction of $\Delta_W$ is local and does not require $X$ to be projective.  In particular, the statement of Theorem \ref{ThmFirstFAdjunction} is ring theoretic and might be more familiar to commutative algebraists.  However the $\Delta_W$ constructed is canonical.  In particular, the $\Delta_W$ glue together to give us the result in the global setting, see Remark \ref{RemarkGlobalGluing}.

When we combine this theory with the work of Fedder, see \cite{FedderFPureRat}, we obtain the following:
\begin{atheorem}[Theorem \ref{ThmMainAppOfFedder}]
Suppose that $S$ is a regular $F$-finite ring such that $F^e_*S$ is a free $S$ module (for example, if $S$ is local) and that $R = S/I$ is a quotient that is a normal domain.  Further suppose that $\Delta_R$ is an effective $\bQ$-divisor on $\Spec R$ such that $\Hom_R(F^e_* R((p^e - 1) \Delta ), R)$ is a rank one free $F^e_* R$-module (for example, if $R$ is local and $(p^e - 1)(K_R + \Delta$) is Cartier).  Then there exists an effective $\bQ$-divisor $\Delta_S$ on $\Spec S$ such that:
\begin{itemize}
\item[(a)]  $(p^e - 1)(K_S + \Delta_S)$ is Cartier.
\item[(b)]  $I$ is $(\Delta_S, F)$-compatible and $(S, \Delta_S)$ is sharply $F$-pure at the minimal associated primes of $I$ (that is, the generic points of $\Spec S/I$).
\item[(b)]  $\Delta_S$ induces $\Delta_R$ as in the Main Theorem.
\end{itemize}
\end{atheorem}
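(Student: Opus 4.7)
The plan is to invert the construction outlined in the introduction: convert the input datum $\Delta_R$ into an $R$-linear map $\phi_R \in \Hom_R(F^e_* R, R)$ via the bijection of Theorem~\ref{TheoremDivisorsInduceMaps}, then lift $\phi_R$ to an $S$-linear map $\phi_S \in \Hom_S(F^e_* S, S)$ that is compatible with $I$ using a Fedder-type correspondence, and finally translate $\phi_S$ back into the desired $\bQ$-divisor $\Delta_S$ on $\Spec S$ via Theorem~\ref{TheoremDivisorsInduceMaps} applied to $S$.

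More precisely, the hypothesis that $\Hom_R(F^e_* R((p^e-1)\Delta_R), R)$ is a free rank-one $F^e_* R$-module says exactly that $(p^e - 1)(K_R + \Delta_R)$ is Cartier, and Theorem~\ref{TheoremDivisorsInduceMaps} then supplies a nonzero map $\phi_R : F^e_* R \to R$, unique up to a unit of $R$, whose associated divisor is $\Delta_R$. For the lift, Fedder's standard computation gives, under the assumption that $S$ is regular with $F^e_* S$ free over $S$, a canonical identification
\[
\Hom_R(F^e_* R, R) \;\cong\; F^e_*\bigl( (I^{[p^e]}:I)/I^{[p^e]} \bigr),
\]
and via the isomorphism $\Hom_S(F^e_* S, S) \cong F^e_* S$ obtained from a generator of the free rank-one module, any preimage of $\phi_R$ inside $F^e_* (I^{[p^e]}:I)$ yields an $S$-linear map $\phi_S$ satisfying $\phi_S(F^e_* I) \subseteq I$ and reducing modulo $I$ to $\phi_R$.

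Applying Theorem~\ref{TheoremDivisorsInduceMaps} to the nonzero map $\phi_S : F^e_* S \to S$ produces an effective $\bQ$-divisor $\Delta_S$ on $\Spec S$ with $(p^e - 1)(K_S + \Delta_S)$ Cartier, yielding (a). The containment $\phi_S(F^e_* I) \subseteq I$ is exactly the $(\Delta_S, F)$-compatibility of $I$, giving the first half of (b). To see sharp $F$-purity of $(S, \Delta_S)$ at each minimal prime $P$ of $I$, I would localize the Fedder correspondence at $P$: the hypothesis that $\Hom_{R_P}(F^e_* R_P, R_P)$ is a free rank-one module over $F^e_* R_P$ forces the reduction of $\phi_S$ at $P$ to be surjective, and this surjectivity lifts back to a statement about $\phi_{S,P}$ modulo $PS_P$ which, under the map-to-divisor dictionary, is precisely the sharp $F$-purity of $(S, \Delta_S)$ at $P$. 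Finally, (c) should be immediate from the construction: the restriction procedure in the Main Theorem turns $\phi_S$ into $\phi_R$ by definition of the lift, and hence turns $\Delta_S$ into $\Delta_R$.

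The main obstacle I expect is the bookkeeping around the Fedder-type identification in sufficient generality to keep track of the twist by $(p^e - 1)\Delta_R$ and of the freeness hypotheses: specifically, showing that the (a priori local) preimage of the generator $\phi_R$ can be chosen globally and that its nonvanishing modulo $I$ at each minimal associated prime of $I$ transfers cleanly to sharp $F$-purity of the pair $(S, \Delta_S)$ at those primes. Once these technicalities are handled, properties (a), (b), (c) fall out of the commutativity of the diagram relating $\phi_R$, $\phi_S$, $\Delta_R$, and $\Delta_S$ under the two instances of Theorem~\ref{TheoremDivisorsInduceMaps}.
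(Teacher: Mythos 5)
Your proposal is correct and follows exactly the paper's route: convert $\Delta_R$ to a nonzero generator $\phi_R \in \Hom_R(F^e_* R, R)$ via Theorem~\ref{TheoremDivisorsInduceMaps}, lift it to $\phi_S \in \Hom_S(F^e_* S, S)$ with $\phi_S(F^e_* I) \subseteq I$ using Fedder's Lemma~1.6, and convert $\phi_S$ back to $\Delta_S$ via Theorem~\ref{TheoremMapsInduceDivisors}. One imprecision worth flagging: the justification for sharp $F$-purity at the generic point $P = I$ is not that $\Hom_{R_P}(F^e_* R_P, R_P)$ is free rank-one (that is automatic over the field $R_P = \Frac(R)$ and by itself forces nothing about $\phi_S$), but that $\phi_R$ is nonzero and $R$ is a domain, so $(\phi_R)_P$ is a nonzero map over a field and hence surjective; then $\phi_{S,P}$ is surjective by Observation~\ref{ObsMapsRestrictToCenters}, which gives the claim via Theorem~\ref{TheoremMapsInduceDivisors}(d) — this is precisely what the paper compresses into ``condition (b) follows immediately since the map $F^e_* R \to R$ we are concerned with is non-zero.''  Similarly, the containment $\phi_S(F^e_* I) \subseteq I$ gives $(\Delta_S, F)$-compatibility of $I$ not by definition but via Proposition~\ref{PropUniformlyFCompatIfAndOnlyIf}, which reduces checking all $e$ and all maps to checking the single generator when the relevant $\Hom$ module is free and $I$ is radical.
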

\hskip -12pt We do not know of any similar result proved in characteristic $0$ (except when $R$ is a complete intersection, see \cite{EinMustataYasuda}).   The $\Delta_S$ in Theorem \ref{ThmMainAppOfFedder} is not canonically determined and therefore we do not see how to globalize this statement.

We also prove the following result.
\begin{ntheorem} [Corollary \ref{CorCompatFrobeniusSplitIdealsAreCenters}, Remark \ref{RemarkGlobalGluing}]
Suppose that $X$ is a normal variety of finite type over an $F$-finite field $k$.  Suppose that $\phi : F^e_* \O_X \rightarrow \O_X$ is a (global) splitting of Frobenius.  Then there exists an effective divisor $\Delta$ on $X$ (determined uniquely by $\phi$) such that
\begin{itemize}
\item[(1)]  $K_X + \Delta \sim_{\bQ} 0$,
\item[(2)]  $(X, \Delta)$ is sharply $F$-pure,
\item[(3)]  The irreducible subvarieties compatibly split by $\phi$ coincide exactly with the centers of sharp $F$-purity of $(X, \Delta)$.
\end{itemize}
\end{ntheorem}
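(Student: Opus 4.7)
The plan is to run the bijection from Theorem \ref{TheoremDivisorsInduceMaps} in reverse: starting from the Frobenius splitting $\phi \in \Hom_{\O_X}(F^e_* \O_X, \O_X)$, produce an effective $\bQ$-divisor $\Delta$ on $X$ with $(p^e-1)(K_X + \Delta)$ Cartier, and then check that the pair $(\Delta, \phi)$ witnesses all three assertions. Uniqueness of $\Delta$ is automatic from the bijection, since the only freedom in the equivalence class comes from pre-composition with a unit, and the input $\phi$ is fixed.

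For (1), I would observe that $\sHom_{\O_X}(F^e_* \O_X, \O_X)$ is reflexive of rank one with $F^e_*\O_X((1-p^e)K_X)$ as its reflexive hull (this is the usual duality for Frobenius on a normal variety, valid because the regular locus is big). A non-zero global section $\phi$ of this sheaf has a well-defined divisor of zeros $D \geq 0$ satisfying $D \sim (1-p^e)K_X$; setting $\Delta = \frac{1}{p^e - 1} D$ gives $(p^e-1)(K_X + \Delta) \sim 0$, and in particular $K_X + \Delta \sim_\bQ 0$. Property (2) then comes for free from the construction: $(X, \Delta)$ is sharply $F$-pure precisely when the map in $\Hom_{\O_X}(F^e_* \O_X((p^e-1)\Delta), \O_X)$ corresponding to $\Delta$ locally sends a Frobenius generator to $1$, and this is exactly the splitting hypothesis on $\phi$.

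For (3), because $(p^e-1)(K_X+\Delta)$ is Cartier, Proposition \ref{PropUniformlyFCompatIfAndOnlyIf} applies locally and tells us that the center-of-sharp-$F$-purity condition for $(X, \Delta)$ can be tested by the \emph{single} generating map $\phi$: an ideal sheaf $J$ is locally a center of sharp $F$-purity for $(X, \Delta)$ if and only if $\phi(F^e_* J) \subseteq J$. But this last condition is by definition the statement that the subvariety $V(J)$ is compatibly split by $\phi$, so the two collections of irreducible subvarieties coincide and (3) is immediate.

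The main obstacle is ensuring that the map associated to $\Delta$ under Theorem \ref{TheoremDivisorsInduceMaps} is literally the original $\phi$, rather than merely some representative of its equivalence class; this needs care because the bijection is stated only up to units, and the appeal to Proposition \ref{PropUniformlyFCompatIfAndOnlyIf} in (3) depends on this identification. Once it is nailed down, the remainder is essentially bookkeeping. Finally, globalization (Remark \ref{RemarkGlobalGluing}) follows because $\Delta$ is constructed canonically from $\phi$ on every affine open of $X$, so the local divisors automatically glue to a global effective $\bQ$-divisor $\Delta$ on $X$.
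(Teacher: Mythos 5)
Your proposal is correct and follows the same route the paper takes: construct $\Delta=\Delta_\phi$ from $\phi$ via Theorem \ref{TheoremMapsInduceDivisors}, deduce (1) and (2) from the construction and part (d) of that theorem, obtain (3) from Proposition \ref{PropUniformlyFCompatIfAndOnlyIf}, and glue over an affine cover since $\Delta_\phi$ is canonically determined (exactly the content of Corollary \ref{CorCompatFrobeniusSplitIdealsAreCenters} combined with Remark \ref{RemarkGlobalGluing}). The ``main obstacle'' you flag is in fact a non-issue: Theorem \ref{TheoremMapsInduceDivisors}(c) already says $\phi$ itself is an $F^e_*\O_X$-module generator of $\sHom_{\O_X}(F^e_*\O_X((p^e-1)\Delta),\O_X)$ inside $\sHom_{\O_X}(F^e_*\O_X,\O_X)$, so Proposition \ref{PropUniformlyFCompatIfAndOnlyIf} can be applied with $T_e=\phi$ literally; and even if one only had $\phi$ up to a unit $u$, the test $\phi(uF^e_*J)\subseteq J$ is unchanged because $uJ=J$ for any ideal $J$ and unit $u$.
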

\hskip -12pt  Since centers of sharp $F$-purity are closely related to centers of log canonicity, the previous result should be viewed as a link between compatibly split subvarieties and centers of log canonicity (of log Calabi-Yau pairs).

Finally, also using these ideas, we prove that there are only finitely many centers of sharp $F$-purity for a sharply $F$-pure triple $(R, \Delta, \ba_{\bullet})$ (the case when $R$ is a local ring was done in \cite{SchwedeCentersOfFPurity} using the techniques of \cite{EnescuHochsterTheFrobeniusStructureOfLocalCohomology} or \cite{SharpGradedAnnihilatorsOfModulesOverTheFrobeniusSkewPolynomialRing}).  Here $\ba_{\bullet}$ is a graded system of ideals; see  \cite{HaraACharacteristicPAnalogOfMultiplierIdealsAndApplications} and \cite{SchwedeCentersOfFPurity}.
\begin{ctheorem}[Theorem \ref{ThmFinitelyManyCenters}]
{\it  If $(R, \Delta, \ba_{\bullet})$ is sharply $F$-pure, then there are at most finitely many centers of sharp $F$-purity.  }
\end{ctheorem}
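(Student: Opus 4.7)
The plan is to combine Main Theorem~(v) with Noetherian induction on $\dim R$, reducing the question to the local case that was settled in \cite{SchwedeCentersOfFPurity}.

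First I would reduce to the case that $R$ is a normal integral domain: a normal Noetherian ring decomposes as a finite product of normal integral domains (one per connected component of $\Spec R$), and the centers of sharp $F$-purity respect this decomposition, so it suffices to treat each factor separately. Write $\mathcal{C}$ for the set of centers of sharp $F$-purity and let $\mathcal{C}^{\circ} := \mathcal{C}\setminus\{(0)\}$ denote the nontrivial centers.

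Next I would induct on $\dim R$, the base case $\dim R = 0$ being trivial. Let $\mathcal{C}^{\circ}_{\min}$ denote the elements of $\mathcal{C}^{\circ}$ that are minimal under inclusion of primes. For each $Q\in\mathcal{C}^{\circ}_{\min}$, I apply Main Theorem~(v) to $W = V(Q)$ (passing to the normalization via Proposition~\ref{PropositionPropertiesOfNormalizedRestrictedDelta} if $R/Q$ is not already normal): this puts the centers in $\mathcal{C}^{\circ}$ which contain $Q$ in bijection with the centers of sharp $F$-purity of the triple $(R/Q,\Delta_{R/Q},\overline{\ba}_{\bullet})$, a sharply $F$-pure triple on a ring of strictly smaller dimension. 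By the inductive hypothesis the latter set is finite. Since every $Q'\in\mathcal{C}^{\circ}$ contains some $Q\in\mathcal{C}^{\circ}_{\min}$, we conclude that $\mathcal{C}^{\circ}$ is finite as soon as $\mathcal{C}^{\circ}_{\min}$ is.

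Proving finiteness of $\mathcal{C}^{\circ}_{\min}$ is the main obstacle. For each $Q\in\mathcal{C}^{\circ}_{\min}$, the localized triple $(R_Q,\Delta_{R_Q},\overline{\ba}_{\bullet})$ is sharply $F$-pure and has $QR_Q$ as a maximal-height center; Proposition~\ref{PropUniformlyFCompatIfAndOnlyIf} produces a single $R_Q$-linear ``generating'' map whose compatible primes are precisely the centers of this localized triple, and the local finiteness of \cite{SchwedeCentersOfFPurity}---which rests on the Enescu--Hochster and Sharp analyses of $F$-anti-nilpotent modules over the Frobenius skew polynomial ring, \cite{EnescuHochsterTheFrobeniusStructureOfLocalCohomology, SharpGradedAnnihilatorsOfModulesOverTheFrobeniusSkewPolynomialRing}---then bounds the centers at each localization. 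To promote this local information to a global bound I would form the ideal $J := \bigcap_{Q\in\mathcal{C}^{\circ}_{\min}} Q$: an arbitrary intersection of $F$-compatible primes is $F$-compatible, its radical is $F$-compatible, and every $F$-compatible ideal in a Noetherian ring has finitely many (necessarily $F$-compatible) minimal primes, so one verifies that $\mathcal{C}^{\circ}_{\min}$ coincides with the nonzero minimal primes of $J$. The subtlety that I expect to be the hardest part of the argument is the adaptation of the generating-map machinery to the graded-system setting: one must reduce the testing of centers of $(R,\Delta,\ba_{\bullet})$ to a single exponent $e$ and a single $R$-linear map $R^{1/p^e}\to R$, and it is precisely the sharp $F$-purity hypothesis on $\ba_{\bullet}$ that enables this reduction.
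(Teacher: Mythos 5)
Your route differs from the paper's: the paper argues directly by contradiction with a pigeonhole argument (assume infinitely many $\phi$-compatible primes, extract an infinite subcollection of a fixed height whose Zariski closure is irreducible with generic point $P$, and observe that the images of those primes in $\Spec R/P$ are dense in $\Spec R/P$ yet lie inside the proper closed non-strongly-$F$-regular locus of $(R/P, \Delta_P)$). You propose instead a Noetherian induction on $\dim R$ driven by the adjunction bijection. Both strategies ultimately hinge on $F$-adjunction and on the nonvanishing of the big test ideal, so the inductive route is a reasonable alternative; but as written it has a gap.

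The gap is in the step you identify as the crux, the finiteness of $\mathcal{C}^{\circ}_{\min}$. The assertion that ``$\mathcal{C}^{\circ}_{\min}$ coincides with the nonzero minimal primes of $J := \bigcap_{Q\in\mathcal{C}^{\circ}_{\min}} Q$'' collapses precisely in the case one needs it: if $\mathcal{C}^{\circ}_{\min}$ were infinite then $J$ could well be $(0)$ (an intersection of infinitely many height-one primes of a domain usually is), and $(0)$ has no nonzero minimal primes, so the claimed coincidence is false rather than proving finiteness. The missing ingredient is Lemma~\ref{LemmaPropertiesOfUniformlyFCompatible}(4): in the domain case every nonzero $F$-compatible prime contains the big test ideal $\tau_b\neq 0$, hence $J\supseteq \tau_b\neq 0$; only then are the minimal primes $P_1,\dots,P_k$ of $J$ finitely many nonzero $F$-compatible primes, and a short argument shows each $Q\in\mathcal{C}^{\circ}_{\min}$ contains some $P_i$ and hence equals it by minimality. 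Your paragraph instead leans on local finiteness at each $R_Q$ via Enescu--Hochster and Sharp ``to promote\dots to a global bound,'' but local finiteness of compatible primes in $R_Q$ does nothing to bound the number of $Q$'s globally; that whole digression is a red herring, and the test-ideal fact is the only genuine global input. Two further loose ends you should address: Main Theorem~(v) requires $W$ normal, and Proposition~\ref{PropositionPropertiesOfNormalizedRestrictedDelta}(v) gives only the lifting direction (ideals of $(R/Q)^N$ to ideals of $R$), not the bijection your induction needs -- the paper handles this by restricting to an affine chart on which $R/P$ is normal, which you could imitate but do not; and Main Theorem~(v) is stated for pairs, so the reduction from the triple $(R,\Delta,\ba_{\bullet})$ to a single surjective map $\phi(a\cdot\blank)$ and its associated divisor -- which you rightly flag as a subtlety -- must actually be carried out before the adjunction bijection applies.
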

\hskip -12pt  This also implies that if $X$ is noetherian (although not necessarily affine) and $(X, \Delta)$ is locally sharply $F$-pure, then there are at most finitely many centers of sharp $F$-purity.  This is the analog of the statement that ``if $(X, \Delta)$ is log canonical, there exists at most finitely many centers of log canonicity''.  Another implication of this is that for a globally $F$-split variety, there are at most finitely many subschemes compatibly split with any given splitting, see Corollary \ref{CorFinitelyManyCompatiblySplit}.  In the case of a local ring, similar results have been obtained in \cite{EnescuHochsterTheFrobeniusStructureOfLocalCohomology} and in \cite{SharpGradedAnnihilatorsOfModulesOverTheFrobeniusSkewPolynomialRing}, also see \cite[Corollary 5.2]{SchwedeCentersOfFPurity}.  Finally, essentially the same result has been independently obtained by Kumar and Mehta, see \cite{KumarMehtaFiniteness}.

We conclude this paper with comparison of $\Delta_{R/Q}$ with related constructions which have been considered in characteristic zero (in particular, the aforementioned ``different'').  We then consider what happens if we normalize $R/Q$ (in case $R/Q$ is not normal).
We conclude with several further remarks and questions.  In particular see Remark \ref{RemarkGlobalGluing} where a global version of the ideas of this paper are briefly discussed.   

\vskip 12pt
\hskip -12pt{\it Acknowledgments:}

The author would like to thank Florin Ambro, Manuel Blickle, Mel Hochster, Karen Smith, Shunsuke Takagi, and Wenliang Zhang for several valuable discussions.  The author would also like to thank Shunsuke Takagi and the referee for carefully reading an earlier draft and providing several useful comments.

\section{Preliminaries and notation}
\label{SectionFSingularities}

Throughout this paper, all schemes and rings are noetherian, excellent, reduced and of characteristic $p > 0$.  We also assume that all rings $R$ (and schemes $X$) have locally normalized dualizing complexes, $\omega_R^{\mydot}$ (respectively $\omega_X^{\mydot}$), see \cite{HartshorneResidues}.  In fact, little is lost if one only considers rings that are of essentially finite type over a perfect field.  Since we are primarily concerned with the affine or local setting, we will freely switch between the notation corresponding to a ring $R$ and the associated scheme $X = \Spec R$.  If $X = \Spec R$ and $R$ is reduced, then we will use $k(X) = k(R)$ to denote the total field of fractions of $R$.  If $D$ is a divisor on $X = \Spec R$, we will mix notation and use $R(D)$ to denote the global sections of $\O_X(D)$.  Furthermore, we will often use $F^e_* M$ to denote an $R$-module $M$ viewed as an $R$-module via the $e$-iterated Frobenius, that is $r.x = r^{p^e} x$ (informally, this is just restriction of scalars).  In particular, when $R$ is reduced $F^e_* R$ is just another notation for $R^{1 \over p^e}$.  The reason for this notation is that if $F^e : X \rightarrow X$ is the $e$-iterated Frobenius, then $F^e_* \O_X$ is just the sheaf associated to $R^{1 \over p^e}$.

We briefly review some properties of Weil divisors on normal schemes, compare with \cite[Chapter II, Section 6]{Hartshorne}, \cite{HartshorneGeneralizedDivisorsOnGorensteinSchemes} and \cite[Chapter 7]{BourbakiCommutativeAlgebraTranslation}.  Recall that on a normal scheme $X$, a \emph{Weil divisor} is finite formal sum of reduced and irreducible subschemes of codimension 1, and a \emph{prime divisor} is a single irreducible subscheme of codimension 1.   So if $X = \Spec R$, the Weil divisors carry the same information as formal sums of height one prime ideals.  A \emph{$\bQ$-Divisor} is an element of $\{\text{group of Weil divisors} \} \tensor_{\bZ} \bQ$, it can also be viewed as a finite formal sum $\sum a_i D_i$ where the $a_i \in \bQ$ and the $D_i$ are prime divisors.  See \cite{KollarMori} for basic facts about $\bQ$-divisors from this point of view.  A $\bQ$-divisor for which all the $a_i$ are integers is called an \emph{integral divisor} (in other words, an integral divisor is a $\bQ$-divisor that is also a Weil divisor).  A $\bQ$-divisor is called \emph{$\bQ$-Cartier} if there exists an integer $m > 0$ such that $mD$ is an integral Cartier divisor.  A $\bQ$-divisor is called \emph{$m$-Cartier} if $mD$ is an integral Cartier divisor.  A divisor (respectively a $\bQ$-divisor) $D = \sum a_i D_i$ is called \emph{effective} if each of the $a_i$ are non-negative integers (respectively, non-negative rational numbers).

Since $X$ is normal, for each prime divisor $D$ on $X$, there is an associated discrete valuation $v_D$ at the generic point of $D \subset X$.
Then, for any non-degenerate element $f \in k(X)$ (an element is \emph{non-degenerate} if it is non-zero on each generic point of $X = \Spec R$),
there is a divisor $\Div f$ which is defined as $\Div f = \sum_{D \subset X} v_D(f) D$.
Recall that associated to any divisor $D$ on $X = \Spec R$ there is a coherent sheaf $\O_X(D)$ whose global sections
 we will denote by $R(D)$.  Recall that the sheaf $R(D)$ is reflexive with respect to $\Hom_R(\blank, R)$.  

For the convenience of the reader, we record some useful properties of reflexive sheaves that we will use without comment.

\begin{proposition} \cite{Hartshorne}, \cite[Proposition 1.11, Theorem 1.12]{HartshorneGeneralizedDivisorsOnGorensteinSchemes}
Suppose that $R$ is a normal ring and suppose that $M$ and $N$ are finitely generated $R$-modules.  Then:
\begin{itemize}
\item[(1)]  $M$ is reflexive (that is, the natural map $M \rightarrow \Hom_R(\Hom_R(M, R), R) = (M^{\vee})^{\vee}$ is an isomorphism) if and only if $M$ is S2.
\item[(2)]  $\Hom_R(M, R) = M^{\vee}$ is reflexive.
\item[(3)]  If $R$ is of characteristic $p$ and $F$-finite (see Definition \ref{DefnFFinite}), then $M$ is reflexive if and only if $F^e_* M$ is reflexive.
\item[(4)]  If $N$ is reflexive, then $\Hom(M, N)$ is also reflexive.
\item[(5)]  Suppose $M$ is reflexive, that $X = \Spec R$ and $Z \subset X$ is a closed subset of codimension 2.  Set $U$ to be $X \setminus Z$ and let $i : U \rightarrow X$ be the inclusion.  Then $i_* (M|_U) \cong M$.
\item[(6)]  With notation as in (5), the restriction map to $U$ induces an equivalence of categories from reflexive coherent sheaves on $X$ to reflexive coherent sheaves on $U$.
\end{itemize}
\end{proposition}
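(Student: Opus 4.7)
The plan is to use Serre's S2 condition as the common bridge: reflexivity of a finitely generated module over a normal ring is equivalent to S2, and S2 is a depth condition that behaves well under duals, finite morphisms, and restriction to open sets with small complement. So my approach would be to prove (1) first, and then derive the remaining statements by standard depth arguments plus local cohomology sequences.

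For (1), I would consider the natural map $\eta \colon M \to M^{\vee\vee}$. Localizing at a height one prime $\mathfrak{p}$ of $R$, we are in a DVR, where reflexivity of finitely generated modules follows from the structure theorem, so $\eta$ is an isomorphism in codimension $\leq 1$. Hence $\ker\eta$ and $\mathrm{coker}\,\eta$ are supported in codimension $\geq 2$. On the other hand, $M^{\vee\vee}$ is automatically S2 (as the $R$-dual of a finitely generated module over a normal, hence S2, ring). Thus $\eta$ is an isomorphism precisely when $M$ itself has no submodule or quotient supported in codimension $\geq 2$ interfering with it, which is Serre's S2 criterion. Property (2) is then immediate: $M^{\vee}$ is a dual, hence S2, hence reflexive by (1). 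For (4), take a presentation $R^a \to R^b \to M \to 0$ and apply $\Hom_R(\blank, N)$ to obtain an exact sequence $0 \to \Hom_R(M, N) \to N^b \to N^a$; since $N$ is reflexive and therefore S2, $\Hom_R(M, N)$ is a submodule of an S2 module $N^b$ whose cokernel into $N^a$ behaves well enough to preserve depth conditions in codimension $\leq 2$, so $\Hom_R(M,N)$ is S2 and hence reflexive by (1).

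For (3), I would use that $R \to F^e_* R$ is module-finite (by $F$-finiteness) and induces a homeomorphism on spectra. As an $R$-module, $F^e_* M$ is just $M$ with scalars restricted along Frobenius, so for any prime $\mathfrak{p}$ of $R$, $\mathrm{depth}_{R_\mathfrak{p}}(F^e_* M)_{\mathfrak{p}} = \mathrm{depth}_{R_\mathfrak{p}} M_{\mathfrak{p}}$ (one can take a regular sequence on $M$ at $\mathfrak{p}$ and its $p^e$-th powers form a regular sequence in the Frobenius-twisted structure, and conversely). Hence S2 of $M$ and S2 of $F^e_* M$ are equivalent, and (1) finishes the argument.

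For (5), observe that $M$ reflexive means $M$ is S2, which translates into $H^0_Z(M) = H^1_Z(M) = 0$ for any closed subset $Z$ of codimension at least $2$ (by the standard equivalence between depth and vanishing of local cohomology). The local cohomology long exact sequence
\[
0 \to H^0_Z(M) \to M \to i_*(M|_U) \to H^1_Z(M)
\]
then yields $M \cong i_*(M|_U)$. Finally, (6) follows by verifying that $i_*$ restricted to reflexive sheaves on $U$ lands in reflexive sheaves on $X$ (depth conditions propagate across the codimension $\geq 2$ locus since everything is already S2 on $U$ and extension by $i_*$ from a codimension $\geq 2$ complement produces an S2 extension), and then (5) provides the natural isomorphism $\mathrm{id} \cong i_* \circ (\blank|_U)$, with the other direction $(\blank|_U) \circ i_* \cong \mathrm{id}$ being tautological. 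The main obstacle I expect is the bookkeeping for (3): showing carefully that depth is preserved under Frobenius pushforward requires one to keep track of the distinction between the ideal $\mathfrak{p}$ and its Frobenius-twisted incarnation acting on $F^e_* M$, which is where $F$-finiteness of $R$ is essential; the remaining steps are essentially formal once S2 has been identified as the correct invariant.
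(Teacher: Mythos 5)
The paper does not actually prove this proposition; it merely cites Hartshorne's textbook and \cite[Proposition 1.11, Theorem 1.12]{HartshorneGeneralizedDivisorsOnGorensteinSchemes}, so there is no internal proof to compare against. Your strategy of identifying reflexivity with the $S_2$ condition and then doing depth bookkeeping is precisely the approach taken in those sources, and the overall outline is sound, but a few spots need tightening. In (1), you should be explicit that the $S_2$ condition in play is Hartshorne's (with $\mathrm{depth}\,M_x \geq \min(2,\dim \O_{X,x})$, not $\min(2,\dim M_x)$); under the weaker convention the claim is simply false, e.g.\ residue fields would be $S_2$ but not reflexive. With Hartshorne's convention, $S_2$ forces torsion-freeness, which is what you need to know that $\eta\colon M\to M^{\vee\vee}$ is even injective -- you quietly assume this. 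The phrase ``$\eta$ is an isomorphism precisely when $M$ has no submodule or quotient supported in codimension $\geq 2$ interfering with it'' is too loose to carry the argument: the clean step is to note that $\mathrm{coker}(\eta)$ is supported in codimension $\geq 2$, apply the depth lemma to $0\to M\to M^{\vee\vee}\to \mathrm{coker}(\eta)\to 0$ at a minimal prime $\mathfrak p$ of $\Supp(\mathrm{coker}(\eta))$ to get $\mathrm{depth}\,\mathrm{coker}(\eta)_{\mathfrak p}\geq \min(\mathrm{depth}\,M_{\mathfrak p}-1,\,\mathrm{depth}\,M^{\vee\vee}_{\mathfrak p})\geq 1$, and observe that this contradicts $\mathrm{coker}(\eta)_{\mathfrak p}$ having finite length. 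The same depth-lemma computation (with $\Hom$ applied to a free presentation) cleans up the waffle in your sketch of (4). Finally, in (6) the nontrivial content is that $i_*\mathcal G$ is coherent and reflexive for $\mathcal G$ a reflexive coherent sheaf on $U$; the standard fix, which you do not indicate, is to extend $\mathcal G$ to \emph{some} coherent sheaf $\mathcal G'$ on $X$, replace $\mathcal G'$ by $(\mathcal G'^{\vee})^{\vee}$ (a reflexive coherent sheaf restricting to $\mathcal G$ on $U$), and then invoke (5) to identify it with $i_*\mathcal G$. Your arguments for (2), (3), and (5) are fine as sketched -- the observation in (3) that $p^e$-th powers of a regular sequence remain regular, read through the Frobenius twist, correctly handles the only subtle point there.
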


\begin{proposition}
\label{PropSectionsAreSameAsEmbeddings}  \cite[Proposition 2.9]{HartshorneGeneralizedDivisorsOnGorensteinSchemes}, \cite[Remark 2.9]{HartshonreGeneralizedDivisorsAndBiliaison}
Suppose that $X$ is a normal scheme and $D$ is a divisor on $X$.
Then, there is a one-to-one correspondence between effective divisors linearly equivalent to $D$ and non-degenerate sections\footnote{A section is called \emph{non-degenerate} if it is non-zero at the generic point of every irreducible component of $X$.}  $s \in \Gamma(X, \O_X(D))$ modulo multiplication by units in $H^0(X, \O_X)$.
\end{proposition}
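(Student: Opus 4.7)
The plan is to exhibit explicit mutually inverse maps between the two sets. The crucial preliminary observation is that on a normal scheme $X$ (which, being noetherian and normal, is a disjoint union of its irreducible components, each of which is integral) the coherent sheaf $\O_X(D)$ sits naturally as a subsheaf of the constant sheaf $\mathcal{K}_X$ of total quotient rings, with
\[
\O_X(D)(U) = \{\, f \in k(X) : (\Div f + D)|_U \geq 0\,\} \cup \{0\}.
\]
Thus a section $s \in \Gamma(X, \O_X(D))$ is the same data as a (possibly zero on some components) element of $k(X)$ satisfying a divisor inequality, and non-degeneracy of $s$ corresponds to non-vanishing on each component.

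First I would define the forward map: to a non-degenerate section $s \in \Gamma(X, \O_X(D))$ associate the divisor $E(s) := \Div(s) + D$. This is effective directly from the local description of $\O_X(D)$ recalled above, and it is linearly equivalent to $D$ since $E(s) - D = \Div(s)$ is principal. Multiplying $s$ by a global unit $u \in H^0(X, \O_X)^{\times}$ alters $\Div(s)$ by $\Div(u) = 0$, so the map descends to the quotient by the unit action.

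Next I would define the backward map: given an effective $E$ with $E \sim D$, pick $f \in k(X)$ non-degenerate with $E - D = \Div(f)$, and send $E$ to the class of $f$. The inequality $\Div(f) + D = E \geq 0$ is exactly the condition that $f \in \Gamma(X, \O_X(D))$. The choice of $f$ is pinned down up to an element of $k(X)^{\times}$ of trivial divisor, i.e. up to a unit in $H^0(X, \O_X)$ by normality (any non-degenerate rational function with trivial divisor is a global regular unit, since on each integral normal component it lies in every codimension-one local ring and its inverse does too). Hence the backward map is well-defined.

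Finally I would check that the two constructions are mutually inverse: starting from $s$ and computing $\Div(s) + D$, then extracting any $f$ with $\Div(f) = E(s) - D = \Div(s)$, recovers $s$ up to a global unit; starting from $E$, writing $E = D + \Div(f)$, the forward map returns $\Div(f) + D = E$ on the nose. The argument is formal once the subsheaf-of-$\mathcal{K}_X$ description is in place; the only mildly delicate point, and the place to be careful, is the component-by-component bookkeeping when $X$ is reducible, together with the fact that the ambiguity in the backward map is precisely the quotient group $H^0(X, \O_X)^{\times}$ that we are dividing out by on the section side.
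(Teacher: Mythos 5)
The paper does not prove Proposition \ref{PropSectionsAreSameAsEmbeddings}; it cites it to Hartshorne's work on generalized divisors and moves on. Your argument is correct and is the standard one: identify $\O_X(D)$ with the subsheaf of $\mathcal{K}_X$ of rational functions $f$ with $\Div(f) + D \geq 0$, send a non-degenerate section $s$ to $\Div(s) + D$, send an effective $E \sim D$ back to any $f$ with $E - D = \Div(f)$, and observe that the indeterminacy in each direction is exactly multiplication by a unit of $H^0(X, \O_X)$. The one point worth taking care over, which you do address, is that a non-degenerate rational function with vanishing divisor on a (possibly reducible) normal scheme is a global unit; this uses that a noetherian normal scheme is a disjoint union of integral normal components, on each of which the local ring at any point equals the intersection of the codimension-one localizations. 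No gap.
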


\begin{definition}
If $X$ is equidimensional, then we set $\omega_X$ to be $h^{-\dim X}(\omega_X^{\mydot})$ and call it the \emph{canonical module} of $X$.
If, in addition, $X$ is normal, then $\omega_X$ is a rank 1 reflexive sheaf and so it corresponds to an integral divisor class.
A divisor $D$ such that $\O_X(D) \cong \omega_X$ is called a \emph{canonical divisor of $X$} and is denoted by $K_X$.
\end{definition}

\begin{remark}
If $X$ is not normal but instead Gorenstein in codimension 1 (G1) and S2, then one can still view $\omega_X$ as a divisor class (technically as an  ``almost Cartier divisor'' / ``Weil divisorial subsheaf'', see \cite{HartshorneGeneralizedDivisorsOnGorensteinSchemes} and \cite{KollarFlipsAndAbundance}).  Most of the results of this paper generalize to pairs $(X, \Delta)$ where $X$ is G1 and S2 and $\Delta$ is an element from $\{\text{almost Cartier divisors}\} \tensor \bQ$.  However, there are several technical complications which we feel obscure the main points of this paper and so we will not work in this generality.  In particular, one can have two different almost Cartier divisors / Weil divisorial subsheaves $D$ and $E$ such that $2D = 2E$, see \cite[Page 172]{KollarFlipsAndAbundance}.  Because of this, for a $\bQ$-Weil divisorial subsheaf $D$, $\O_X(D)$ is not well defined.  There are ways around this issue, although statements like Theorem \ref{TheoremMapsInduceDivisors}(e,f) and the definition of sharply $F$-pure pairs would need to be amended.  Another option is to do something similar to what is suggested in Remark \ref{RemarkFinalNonNormal}.
\end{remark}

\begin{definition}
A \emph{pair} $(X, \Delta)$ is the combined information of a normal scheme $X$ and an effective $\bQ$-divisor $\Delta$.  A \emph{triple} $(X, \Delta, \ba^t)$ is the combined information of a pair $(X, \Delta)$, an ideal sheaf $\ba \subseteq \O_X$ which on every chart $U = \Spec R$ satisfies $\ba|_U \cap R^{\circ} \neq \emptyset$, and a positive real number $t > 0$.  If $X = \Spec R$, then we will sometimes write $(R, \Delta)$ instead of $(X, \Delta)$.
\end{definition}

Now we define $F$-singularities, singularities defined by the action of Frobenius.  These are classes of singularities associated with tight closure theory, see \cite{HochsterHunekeTC1}, that are good analogs of singularities from the minimal model program, see for example \cite{KollarMori}.

\begin{definition}
\label{DefnFFinite}
We say that a ring $R$ of positive characteristic $p > 0$ is \emph{$F$-finite} if $F_* R = R^{1 \over p}$ is finite as an $R$-module.
\end{definition}

Throughout the rest of this paper, \emph{all} rings will be assumed to be $F$-finite.  This is not too restrictive of an assumption since any ring essentially of finite type over a perfect field is $F$-finite, see \cite[Lemma 1.4]{FedderFPureRat}.

\begin{definition} \cite{HochsterRobertsFrobeniusLocalCohomology}, \cite{HochsterHunekeTightClosureAndStrongFRegularity}, \cite{HaraWatanabeFRegFPure}, \cite{SchwedeSharpTestElements}
\label{DefnStronglyFRegularSharplyFPure}
Suppose that $(R, \bm)$ is a local ring.  We say that a triple $(R, \Delta, \ba^t)$ is \emph{sharply $F$-pure} if there exists an integer $e > 0$, an element $a \in \ba^{\lceil t(p^e - 1) \rceil}$ and a map $\phi \in \Hom_R(F^e_* R(\lceil (p^e - 1)\Delta \rceil), R)$ such that $\phi(F^e_* (a R) ) = R$.  Here $F^e_* (a R) \subseteq F^e_* R(\lceil (p^e - 1)\Delta \rceil)$.  If $\Delta = 0$ and $\ba = R$, then we call the sharply $F$-pure triple $(R, \Delta, \ba^t)$ (or simply the ring $R$) \emph{$F$-pure}.

Again, assuming $R$ is local, a triple $(R, \Delta, \ba^t)$ is called \emph{strongly $F$-regular} if for every $c \in R^{\circ}$ there is an integer $e > 0$,  an element $a \in \ba^{\lceil t(p^e - 1) \rceil}$, and a map $\phi \in \Hom_R(F^e_* R(\lceil (p^e - 1)\Delta \rceil), R)$ such that $\phi(F^e_* (c a R) ) = R$.

If $X$ is any scheme (for example $X = \Spec R$ where $R$ is a non-local ring), then a triple $(X, \Delta, \ba^t)$ is called \emph{sharply $F$-pure} (respectively, \emph{strongly $F$-regular}) if for every closed point\footnote{If the condition holds at the closed points, then it also holds at the non-closed points.} $x \in X$, the localized triple $(\O_{X,x}, \Delta|_{\Spec \O_{X,x}}, \ba^t_{x})$ is sharply $F$-pure (respectively, strongly $F$-regular).
\end{definition}

\begin{remark}
\label{RemarkBetterDefnStrongFRegSharpFPure}
In the case that $R$ is a non-local ring, these definitions of strong $F$-regularity and sharp $F$-purity are slightly more general than the ones given in \cite{TakagiInversion}, in \cite{TakagiWatanabeFPureThresh}, in \cite{SchwedeSharpTestElements}, or in \cite{SchwedeCentersOfFPurity}.  Previously, a triple $(R, \Delta, \ba^t)$ (with $R$-not necessarily local) was called strongly $F$-regular (respectively sharply $F$-pure) if it satisfied the ``local ring'' version of the condition stated above.  In the case that $\ba = R$ (or more generally, if $\ba$ is principal) then the various notions coincide (regardless of the $\Delta$).  The problem is that it is not clear whether a triple $(R, \Delta, \ba^t)$ is strongly $F$-regular (respectively sharply $F$-pure) if and only if it is strongly $F$-regular (respectively sharply $F$-pure) after localizing at every maximal ideal.
\end{remark}

\begin{remark}
Suppose that $R$ is local and that $(R, \Delta, \ba^t)$ is sharply $F$-pure and that $e$ is as in the above definition, then for every integer $n > 0$ there exists a $\phi_n \in \Hom_R(F^{ne}_* R(\lceil (p^{ne} - 1) \Delta \rceil), R)$ such that $1 \in \phi_n(  F^{ne}_* \ba^{\lceil t(p^{ne} - 1) \rceil})$.  This is follows from the same argument as in \cite[Lemma 2.8]{SchwedeCentersOfFPurity} or \cite[Proposition 3.3]{SchwedeSharpTestElements}.
\end{remark}

\begin{remark}
Sharply $F$-pure singularities are a characteristic $p>0$ analog of log canonical singularities, see \cite{HaraWatanabeFRegFPure} and \cite{SchwedeSharpTestElements}.  Strongly $F$-regular singularities are a characteristic $p>0$ analog of Kawamata log terminal singularities, see \cite{HaraWatanabeFRegFPure}.  There are also good analogs of purely log terminal singularities that we will not discuss here, see \cite{TakagiPLTAdjoint}.
\end{remark}

\begin{definition} \cite{HochsterHunekeTC1}, \cite{HaraTakagiOnAGeneralizationOfTestIdeals}, \cite{SchwedeSharpTestElements}, \cite{SchwedeCentersOfFPurity}
Suppose that $(R, \Delta, \ba^t)$ is a triple.  An element $c \in R^{\circ}$ is called a big sharp test element for $(R, \Delta, \ba^t)$ if for all modules $N \subseteq M$ and all $z \in N^{* \Delta, \ba^t}_M$, one has that $c \ba^{\lceil t(p^e - 1) \rceil} z^{p^e} \subseteq N^{[p^e]\Delta}_M$ for all $e \geq 0$.

For the definition of tight closure with respect to such a triple (and an explanation of the notation above), see \cite[Definition 2.14 ]{SchwedeCentersOfFPurity}.  Also compare with \cite{HaraYoshidaGeneralizationOfTightClosure}, \cite{TakagiInterpretationOfMultiplierIdeals}, and \cite{TakagiPLTAdjoint}.
\end{definition}

If $R$ is reduced and $F$-finite, then there always exists a big sharp test element for any triple $(R, \Delta, \ba^t)$.

\begin{definition}  \cite{HochsterHunekeTC1}, \cite{HaraTakagiOnAGeneralizationOfTestIdeals}, \cite{LyubeznikSmithCommutationOfTestIdealWithLocalization}, \cite{HochsterFoundations}
The \emph{big test ideal} of a triple $(R, \Delta, \ba^t)$, denoted $\tau_b(R; \Delta, \ba^t)$, is defined as follows:
Set $E = \oplus_{\bm \in \bm-\Spec R} E_{R/\bm}$, where $E_{R/\bm}$ is the injective hull of $R/\bm$.  Then
\[
\tau_b(R; \Delta, \ba^t) := \Ann_R 0^{* \Delta \ba^t}_{E} = \bigcap_{\bm} \Ann_R 0^{* \Delta, \ba^t}_{E_{R/\bm}}.
\]
\end{definition}

\begin{remark}
Big test ideals are characteristic $p > 0$ analogs of multiplier ideals, see \cite{SmithMultiplierTestIdeals}, \cite{HaraInterpretation}, \cite{TakagiInterpretationOfMultiplierIdeals} and \cite{HaraYoshidaGeneralizationOfTightClosure}.
\end{remark}

\begin{remark}
 In \cite{SchwedeCentersOfFPurity}, the author defined the big test ideal $\tau_b(R; \Delta, \ba^t)$ in a somewhat different way, essentially using the criterion for the big test ideal found in \cite[Lemma 2.1]{HaraTakagiOnAGeneralizationOfTestIdeals}.  While we will not state that definition here, we note that the big test ideal of \cite{SchwedeCentersOfFPurity} was an ideal $J$ of $R$ which, when localized at any $\bm$, coincided with $\Ann_{R_\bm} 0^{* \Delta \ba^t}_{E_{R/\bm}}$.  We now explain why such a $J$ agrees with $\tau_b(R; \Delta, \ba^t)$.  Note that this $J$ is contained in each $\Ann_{R} 0^{* \Delta \ba^t}_{E_{R/\bm}}$, and so $J \subseteq \Ann_R 0^{* \Delta \ba^t}_{E}$.  Conversely, we see that $\tau_b(R; \Delta, \ba^t) R_{\bm} \subseteq \Ann_{R_\bm} 0^{* \Delta \ba^t}_{E_{R/\bm}} \subseteq J_{\bm}$ which completes the proof.
\end{remark}

\begin{definition} \cite{SchwedeCentersOfFPurity}
\label{DefnUniformlyFCompat}
An ideal $I \subseteq R$ is said to be \emph{$F$-compatible with respect to $(R, \Delta, \ba^t)$} or equivalently \emph{uniformly $(\Delta, \ba^t, F)$-compatible} or simply {\emph{$F$-compatible}} if the context is clear, if for every $e > 0$, every $a \in \ba^{\lceil t(p^e - 1) \rceil}$ and every map $\phi \in \Hom_R(F^e_* R(\lceil t(p^e - 1) \Delta \rceil), R)$, we have $\phi(F^e_* a I) \subseteq I$.  A \emph{prime} ideal $Q$ which is $F$-compatible with respect to $(R, \Delta, \ba^t)$ is called a \emph{center of sharp $F$-purity for $(R; \Delta, \ba^t)$}, or simply a \emph{center of $F$-purity} if the context is clear.  We will also often abuse notation and call the subscheme $W := \Spec R/Q \subseteq \Spec R =: X$ a \emph{center of $F$-purity} as well.
\end{definition}

\begin{remark}
Centers of sharp $F$-purity are characteristic $p > 0$ analogs of centers of log canonicity.  In particular, any center of log canonicity reduced from characteristic $0$ to characteristic $p \gg 0$ is a center of sharp $F$-purity, see \cite[Theorem 6.7]{SchwedeCentersOfFPurity}.
\end{remark}

The following results on $F$-compatible ideals will be used later.

\begin{lemma}  \cite{SchwedeCentersOfFPurity}
\label{LemmaPropertiesOfUniformlyFCompatible}
Consider a triple $(R, \Delta, \ba^t)$ (recall all rings are assumed $F$-finite).  Then the following properties of $F$-compatible ideals are satisfied.
\begin{itemize}
\item[(1)]  Any (ideal-theoretic) intersection of $F$-compatible ideals is $F$-compatible.
\item[(2)]  Any (ideal-theoretic) sum of $F$-compatible ideals is $F$-compatible.
\item[(3)]  The radical of an $F$-compatible ideal is $F$-compatible.
\item[(4)]  The big test ideal $\tau_b(R; \ba^t, \Delta)$ is the unique smallest $F$-compatible ideal that has non-trivial intersection with $R^{\circ}$.
\item[(5)]  The minimal primes of a radical $F$-compatible ideal are also $F$-compatible.
\item[(6)]  A pair $(R, \Delta)$ is strongly $F$-regular if and only if it has no centers of sharp $F$-purity besides the minimal primes of $R$.
\end{itemize}
\end{lemma}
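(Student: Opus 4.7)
Parts (1) and (2) I would dispatch immediately from the definition: for (1), $\phi(F^e_* a \bigcap_\alpha I_\alpha) \subseteq \bigcap_\alpha \phi(F^e_* aI_\alpha) \subseteq \bigcap_\alpha I_\alpha$; for (2), $R$-linearity of $\phi$ distributes the image over the sum.

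The heart of (3) and (5) is a single calculation showing that associated primes of an $F$-compatible ideal are themselves $F$-compatible. If $I$ is $F$-compatible and $P=(I:r)$ for some $r \notin I$, then for $b \in P$ (so $rb \in I$) and admissible $(\phi,a,e)$, $R$-linearity of $\phi$ yields
\[
r \cdot \phi(F^e_* ab) \;=\; \phi\bigl(F^e_* a\, r^{p^e-1}(rb)\bigr) \;\in\; \phi(F^e_* aI) \;\subseteq\; I,
\]
so $\phi(F^e_* ab) \in (I:r) = P$. Part (3) is then immediate: $\sqrt{I}$ is the intersection of the minimal (hence associated) primes of $I$, so (1) applies. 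For (5) I would give a parallel prime-avoidance argument: if $J$ is radical and $F$-compatible with minimal primes $P, Q_1, \ldots, Q_k$, pick $c \in (\bigcap_i Q_i) \setminus P$; for $b \in P$ we have $cb \in J$, hence
\[
c \cdot \phi(F^e_* ab) \;=\; \phi\bigl(F^e_* a\, c^{p^e-1}(cb)\bigr) \;\in\; J \;\subseteq\; P,
\]
and $c \notin P$ forces $\phi(F^e_* ab) \in P$.

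The step I expect to be the real work is (4). The plan is to invoke Matlis duality: applying $\Hom_R(-, E)$ to the data $(\phi,\text{multiplication by } a)$ turns each $\phi \in \Hom_R(F^e_* R(\lceil (p^e-1)\Delta\rceil), R)$ together with its $\ba^{\lceil t(p^e-1)\rceil}$-coefficient into a Frobenius-like action on $E = \bigoplus_\bm E_{R/\bm}$, and under this correspondence $F$-compatible ideals match up bijectively with submodules of $E$ closed under all such actions. The tight-closure submodule $0^{*\Delta,\ba^t}_E$ is closed by construction, so its annihilator $\tau_b(R;\Delta,\ba^t)$ is $F$-compatible. Conversely, if $I$ is $F$-compatible and meets $R^{\circ}$, a big sharp test element inside $I$ forces the submodule dual to $I$ to contain $0^{*\Delta,\ba^t}_E$, whence $I \supseteq \tau_b$. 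Making the twist by $\Delta$ and the $\ba^t$-coefficients precise in this duality is the main technical hurdle.

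Finally (6) is a formal consequence of (4) together with (5) applied to the radical $F$-compatible ideal $J=(0)$: $(R,\Delta)$ is strongly $F$-regular iff $\tau_b(R;\Delta)=R$, which by (4) is equivalent to saying that no proper $F$-compatible ideal meets $R^{\circ}$, i.e.\ every prime $F$-compatible ideal lies in $R \setminus R^{\circ}$; since $R$ is reduced this is the union of the minimal primes, and (5) applied to $(0)$ confirms that the minimal primes are themselves always $F$-compatible, closing the equivalence.
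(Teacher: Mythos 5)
The paper only cites \cite{SchwedeCentersOfFPurity} for this lemma and offers no proof, so there is nothing in-paper to match your argument against; I'll evaluate it on its own merits. Parts (1), (2), (3), and (5) are correct and use the standard device cleanly: the identity $r\cdot\phi(F^e_*ab)=\phi(F^e_*a\,r^{p^e-1}(rb))$ converts $p^e$-linearity into a statement about colon ideals, which handles associated primes of $I$ in (3) and the prime-avoidance step in (5). (In (5) you are tacitly using that a radical ideal in a Noetherian ring is the finite intersection of its minimal primes, which is what makes the choice of $c\in(\bigcap_i Q_i)\setminus P$ legitimate; worth saying.) Part (6) is right in substance, though the converse direction is compressed: to get ``only minimal primes $\Rightarrow$ strongly $F$-regular'' you should spell out that if $\tau_b\subsetneq R$ then $\sqrt{\tau_b}$ is proper and $F$-compatible by (3), and any of its minimal primes is a center of sharp $F$-purity meeting $R^\circ$ (since $\tau_b$ does), hence not a minimal prime of $R$ --- contradiction.

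The genuine gap is in (4), and you have partly flagged it yourself. Two concrete issues. First, the containment as you wrote it is backwards: a big sharp test element $d\in I\cap R^\circ$ forces the submodule $\Ann_E(I)$ dual to $I$ to be \emph{contained in} $0^{*\Delta,\ba^t}_E$, not to contain it; Matlis duality then reverses inclusions to yield $I=\Ann_R(\Ann_E(I))\supseteq\Ann_R(0^{*\Delta,\ba^t}_E)=\tau_b$. Second, the dictionary between $F$-compatible ideals and submodules of $E$ stable under the dual Frobenius-type actions is clean over a complete local ring; for general (non-local, non-complete) $R$ with $E=\bigoplus_\bm E_{R/\bm}$, one must either reduce to the complete local case at each $\bm$ or argue directly with annihilators, and carrying the $\Delta$-twist and the $\ba^{\lceil t(p^e-1)\rceil}$-coefficients through that duality is exactly the technical content you've deferred. (A version of this calculation, under a freeness hypothesis on $\Hom_R(F^e_*R((p^e-1)\Delta),R)$, appears as Proposition \ref{PropositionBigTestIdealIsSmallest} of this paper; comparing against it would be a useful consistency check on your duality step.) You also rely implicitly on the existence of big sharp test elements to know $\tau_b$ meets $R^\circ$ at all; this holds for reduced $F$-finite rings but is an input, not a triviality.
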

A version of Lemma \ref{LemmaPropertiesOfUniformlyFCompatible}(6) is true also for triples $(R, \Delta, \ba^t)$.  Although in that case, one must use the ``new'' strong $F$-regularity condition, see Remark \ref{RemarkBetterDefnStrongFRegSharpFPure}.  In particular, \cite[Corollary 4.6]{SchwedeCentersOfFPurity} is probably not correct as stated.  It should say
\begin{quote}``$(R, \Delta, \ba_{\bullet})$ is strongly $F$-regular after localizing at every maximal ideal of $R$ if and only if $(R, \Delta, \ba_{\bullet})$ has no centers of sharp $F$-purity besides the minimal primes of $R$.''
\end{quote}
Thus the original statement of \cite[Corollary 4.6]{SchwedeCentersOfFPurity} is correct if one uses the definition of strong $F$-regularity from Definition \ref{DefnStronglyFRegularSharplyFPure}.  We believe this is the only instance of the issue described in Remark \ref{RemarkBetterDefnStrongFRegSharpFPure} causing a misstatement in the paper \cite{SchwedeCentersOfFPurity} (although several results can be strengthened if one uses the ``new'' definition).

\section{Relation between Frobenius and boundary divisors}
\numberwithin{equation}{theorem}
In this section we'll describe a correspondence between maps $\phi : F^e_* \O_X \rightarrow \O_X$ and $\bQ$-divisors $\Delta$ such that $K_X + \Delta$ is $\bQ$-Cartier (with index not divisible by $p > 0$).  Statements closely related to this correspondence have appeared in several previous contexts, see \cite[Proof \#2 of Theorem 3.1]{HaraWatanabeFRegFPure} and \cite{MehtaRamanathanFrobeniusSplittingAndCohomologyVanishing}, and were known to experts.  However, we do not think the correspondence has been explicitly written from a $\bQ$-divisor perspective.  As before, in this section we are assuming that $X$ is the spectrum of a normal $F$-finite ring $R$ with a locally normalized dualizing complex $\omega_R^{\mydot}$.

Roughly speaking, the correspondence goes like this.  Suppose $R$ is a local ring and set $X = \Spec R$:
\begin{itemize}
 \item{} Given a $\phi \in \Hom_R(F^e_* R, R)$, this is the same as
 \item{} choosing a map (of $F^e_* R$-modules) $F^e_* R \rightarrow \Hom_R(F^e_* R, R)$ sending $1$ to $\phi$, which is the same as
 \item{} an effective Weil divisor $D$ such that $\O_X(D) \cong \O_X((1 - p^e)K_X)$ (note $F^e_* \O_X((1 - p^e)K_X) \cong \Hom_R(F^e_* R, R)$), which is the same as
 \item{} an effective $\bQ$-divisor $\Delta$ where we set $\Delta = {1 \over p^e - 1}D$.
\end{itemize}

The expert reader might wonder why we divide by $p^e - 1$ in the final step (and thus produce a $\bQ$-divisor).  It turns out that for the purposes of $F$-singularities, composing $\phi$ with itself (ie, $\phi \circ F^e_* \phi$) is harmless, see Section \ref{SectionApplicationsToCenters} below.  Thus by dividing by $p^e - 1$ we are normalizing our divisor with respect to composition; see Theorem \ref{TheoremMapsInduceDivisors}(e).

In order to make this correspondence precise and in order to be able to use it, we first need the following observations about maps $F^e_* \O_X \rightarrow \O_X$ (which of themselves are of independent interest).  Lemma \ref{LemmaHomIsCanonical} is well known to experts, see \cite{FedderFPureRat}, \cite{MehtaRamanathanFrobeniusSplittingAndCohomologyVanishing}, \cite{MehtaSrinivasFPureSurface} and \cite[Lemma 3.4]{HaraWatanabeFRegFPure}, however the proof is short, so we include it for the convenience of the reader.

\begin{lemma}
\label{LemmaHomIsCanonical}
Suppose that $(X, \Delta)$ is a pair such that $(p^e - 1)(K_X + \Delta)$ is a Cartier divisor.  Then $\sHom_{\O_X}(F^e_* \O_X( (p^e - 1) \Delta), \O_X)$ is an invertible sheaf when viewed as an $F^e_* \O_X$-module.
\end{lemma}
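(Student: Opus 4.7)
The approach is to compute the sheaf $\mathcal{H} := \sHom_{\O_X}(F^e_* \O_X((p^e-1)\Delta), \O_X)$ explicitly by means of Grothendieck duality for the finite Frobenius morphism $F^e : X \to X$, identifying it with $F^e_* \mathcal{L}$ for a specific invertible $\O_X$-module $\mathcal{L}$.  Invertibility as an $F^e_* \O_X$-module then follows immediately, because $F^e_* \O_X$ and $\O_X$ agree as sheaves of rings on the underlying topological space (they differ only as $\O_X$-algebras), so local freeness of rank one with respect to the one structure is the same as local freeness of rank one with respect to the other.

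Applying duality to the finite morphism $F^e$ yields a natural isomorphism of $F^e_* \O_X$-modules
\[
\sHom_{\O_X}(F^e_* \O_X((p^e-1)\Delta), \O_X) \;\cong\; F^e_* \sHom_{\O_X}\bigl(\O_X((p^e-1)\Delta),\, F^{e!}\O_X\bigr).
\]
To use this one needs to identify $F^{e!}\O_X$.  On the regular locus of $X$, where $F^e$ is flat and finite, the standard formula for the relative dualizing sheaf of a finite flat morphism gives $F^{e!}\O_X \cong \omega_X \otimes F^{e*}\omega_X^{-1} \cong \O_X((1-p^e)K_X)$.  Since both $F^{e!}\O_X$ and $\O_X((1-p^e)K_X)$ are reflexive rank-one sheaves on the normal scheme $X$ and agree off a codimension-two subset (namely the singular locus), they agree everywhere.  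Combining this with the reflexive identity $\sHom_{\O_X}(\O_X(D), \O_X(E)) \cong \O_X(E-D)$ gives
\[
\mathcal{H} \;\cong\; F^e_* \O_X\bigl(-(p^e-1)(K_X + \Delta)\bigr).
\]

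Now the hypothesis enters: since $(p^e-1)(K_X+\Delta)$ is Cartier, the sheaf $\mathcal{L} := \O_X(-(p^e-1)(K_X+\Delta))$ is an invertible $\O_X$-module.  As a sheaf of abelian groups, $F^e_*\mathcal{L}$ is just $\mathcal{L}$, and under the identification of $F^e_* \O_X$ with $\O_X$ as a sheaf of rings its $F^e_*\O_X$-module structure coincides with the original $\O_X$-module structure on $\mathcal{L}$; hence $F^e_*\mathcal{L}$ is locally free of rank one over $F^e_*\O_X$.  The main technical obstacle I foresee is ensuring that the Grothendieck duality isomorphism is natural as a map of $F^e_*\O_X$-modules (not merely of $\O_X$-modules, which would be a weaker statement) and correctly identifying $F^{e!}\O_X$ globally on a possibly non-Cohen--Macaulay normal scheme; both issues dissolve by reducing to the regular locus and extending by the S2 property.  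As an alternative route, which avoids invoking duality, one could instead give a purely local proof: after shrinking so that $(p^e-1)(K_X+\Delta)$ is principal, pick a generator and directly exhibit a local generator of $\Hom_R(F^e_* R((p^e-1)\Delta), R)$ built from the trace $\Hom_R(F^e_* \omega_R, \omega_R)$, then verify freeness by a rank count at the generic point.
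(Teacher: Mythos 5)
Your proof follows essentially the paper's route---Grothendieck duality for the finite Frobenius morphism---but attempts to carry it out globally, whereas the paper reduces to the local case in the first line. That difference is not cosmetic: the global identification $(F^e)^! \O_X \cong \O_X((1-p^e)K_X)$ (equivalently $(F^e)^! \omega_X \cong \omega_X$) is precisely the point the paper flags as uncertain in the remark immediately following the lemma (Remark~\ref{RemHaveToBeCareful}). The issue is that $(F^e)^! \omega_X$ is \emph{a} canonical module, but canonical modules are only unique up to tensoring with a line bundle; so the extension-by-S2 argument you use (``agree off codimension two, hence agree'') presupposes a \emph{canonical} isomorphism on the regular locus, which is not automatic. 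The relative-dualizing-module formula $\omega_U \tensor (F^e)^*\omega_U^{-1}$ gives an abstract identification, but turning that into a specific isomorphism with $\O_U((1-p^e)K_U)$ already involves a choice of trivialization $\omega_U \cong \O_U(K_U)$, so the claim ``agree on $U$'' is not automatic in the form needed.

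That said, your opening observation---that $F^e_*\O_X$ and $\O_X$ coincide as sheaves of rings on $|X|$, so invertibility over the one structure is the same as invertibility over the other---correctly shows that the lemma is a purely local statement. Exploiting that, one should simply pass to the local ring (or a small affine chart) \emph{before} invoking duality, where the ambiguity disappears because tensoring with a line bundle does nothing locally; there the chain of isomorphisms you use produces exactly $F^e_* \O_X$, which is what the paper does. Your ``alternative route'' at the end is in fact the paper's argument. So the idea is right, but as written the main argument leans on a global identification that is stronger than what is known; inserting the reduction to the local case first closes the gap.
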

\begin{proof}
It is enough to verify this locally, so we may assume that $X$ is the spectrum of a local ring.  Then observe that
\[
\begin{split}
\sHom_{\O_X}(F^e_* \O_X( (p^e - 1) \Delta), \O_X)  \cong \sHom_{\O_X}(F^e_* \O_X( (p^e - 1) \Delta + p^e K_X), \omega_X) \cong \\
F^e_* \sHom_{\O_X}(\O_X( (p^e - 1) \Delta + p^e K_X), \omega_X) \cong F^e_* \O_X((1 - p^e)(K_X + \Delta)) \cong F^e_* \O_X.
\end{split}
\]
\end{proof}

\begin{remark}
We will often view $\sHom_{\O_X}(F^e_* \O_X( (p^e - 1) \Delta), \O_X)$ as an $F^e_* \O_X$-submodule of $\sHom_{\O_X}(F^e_* \O_X, \O_X)$.  
\end{remark}

\begin{remark}
\label{RemHaveToBeCareful}
 For an arbitrary normal (non-local) $F$-finite scheme $X$, we do not know if one always has
\begin{equation}
\label{EqnMustBeCareful}
\sHom_{\O_X}(F^e_* \O_X( (p^e - 1) \Delta), \O_X) \cong \O_X((1 - p^e)(K_X + \Delta)).
\end{equation}
In the non-local case, if one is following the proof of Lemma \ref{LemmaHomIsCanonical}, one should write
\[
 \sHom_{\O_X}(F^e_* \O_X( (p^e - 1) \Delta + p^e K_X), \omega_X) \cong \\
F^e_* \sHom_{\O_X}(\O_X( (p^e - 1) \Delta + p^e K_X), (F^e)^! \omega_X).
\]
The module $(F^e)^! \omega_X = \Hom_{\O_X}(F^e_* \O_X, \omega_X)$ is a canonical module on $X$, but these are only unique up to tensoring with an invertible sheaf.  In the local case, tensoring with an invertible sheaf does nothing (and so $\omega_X$ is unique up to isomorphism -- multiplication by a unit).  Likewise, if $X$ is of essentially finite type over an $F$-finite field, it is easy to see that $(F^e)^! \omega_X$ can be identified with $\omega_X$ (again, non-canonically, but up to multiplication by a unit of $H^0(X, \O_X)$).  Of course, by passing to a sufficiently small affine chart, we can always assume that Equation \ref{EqnMustBeCareful} is satisfied.  In fact, it may be that Equation \ref{EqnMustBeCareful} always holds.
\end{remark}

The previous result also implies the following when interpreted using Fedder's criterion, see \cite{FedderFPureRat}.
\begin{corollary}
\label{CorDescriptionOfFedderColon}
Suppose that $(R, \bm)$ is a quasi-Gorenstein normal local ring (respectively, a $\bQ$-Gorenstein local ring whose index is a factor of $p^d -1$).  Further suppose that we can write $R = S/I$ where $S$ is an $F$-finite regular local ring.  Then for each $e > 0$ (respectively for each $e = n d$, $n > 0$) there exists an element $f_e \in R$ so that $(I^{[p^e]} : I) = I^{[p^e]} + (f_e)$.
\end{corollary}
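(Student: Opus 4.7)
The plan is to combine Lemma \ref{LemmaHomIsCanonical} with Fedder's classical lemma identifying $\Hom_R(F^e_* R, R)$ as a subquotient of $F^e_* S$, and then observe that invertibility over $F^e_* R$ translates to cyclicity of the relevant colon ideal.

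First I would verify that the hypotheses of Lemma \ref{LemmaHomIsCanonical} (with $\Delta = 0$) apply to the given $e$. In the quasi-Gorenstein case $K_R$ is Cartier, so $(p^e - 1)K_R$ is Cartier for every $e > 0$. In the $\bQ$-Gorenstein case, the hypothesis that the index divides $p^d - 1$ together with the factorization $p^{nd} - 1 = (p^d - 1)(1 + p^d + \cdots + p^{(n-1)d})$ shows that $(p^e - 1)K_R$ is Cartier for every $e = nd$. Thus Lemma \ref{LemmaHomIsCanonical} gives that $\Hom_R(F^e_* R, R)$ is invertible as an $F^e_* R$-module. Since $R$ is local, $F^e_* R$ is a local ring (it is just $R^{1/p^e}$), so every invertible module over it is free of rank one.

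Next I would invoke Fedder's lemma \cite{FedderFPureRat}. Because $S$ is an $F$-finite regular local ring, $F^e_* S$ is free over $S$ and $\Hom_S(F^e_* S, S) \cong F^e_* S$ as $F^e_* S$-modules; fix a generator $\Phi$. Writing a map $F^e_* R \rightarrow R$ as a map $F^e_* S \rightarrow S/I$ killing $F^e_* I$, and using the free identification of $\Hom_S(F^e_* S, S)$ with $F^e_* S$, one obtains the standard isomorphism
\[
\Hom_R(F^e_* R, R) \;\cong\; F^e_*\bigl((I^{[p^e]} : I)/I^{[p^e]}\bigr)
\]
of $F^e_* R$-modules, where $\overline{F^e_* s}$ on the right corresponds to the induced map $\overline{F^e_* T} \mapsto \overline{\Phi(F^e_*(sT))}$.

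Combining the two steps, $F^e_*\bigl((I^{[p^e]}:I)/I^{[p^e]}\bigr)$ is a free $F^e_* R$-module of rank one, hence a cyclic $F^e_* S$-module. Lifting any generator to an element $f_e \in (I^{[p^e]} : I) \subseteq S$, we conclude $(I^{[p^e]} : I) = I^{[p^e]} + (f_e)$, as required. The only real content is the invertibility statement from Lemma \ref{LemmaHomIsCanonical}; the identification with Fedder's colon ideal and passage from invertible to cyclic are formal consequences of $R$ being local. The main obstacle, if any, is simply bookkeeping the $F^e_* R$-module structure through Fedder's identification so that ``rank one free'' correctly translates into ``cyclic modulo $I^{[p^e]}$.''
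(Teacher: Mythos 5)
Your proposal is correct and follows the same route as the paper: apply Fedder's Lemma 1.6 to identify $\Hom_R(F^e_* R, R)$ with $F^e_*\bigl((I^{[p^e]}:I)/I^{[p^e]}\bigr)$, then use Lemma \ref{LemmaHomIsCanonical} (via the quasi-Gorenstein or $\bQ$-Gorenstein hypothesis) to see this is free of rank one over the local ring $F^e_*R$, hence cyclic. You spell out the verification that $(p^e-1)K_R$ is Cartier for the relevant $e$ and the passage from invertible to free, both of which the paper leaves implicit, but the argument is the same.
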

\begin{proof}
Simply note that $F^e_* (I^{[p^e]} : I) \Hom_S(F^e_* S, S) / F^e_* I^{[p^e]} \cong \Hom_R(F^e_* R, R)$ by \cite[Lemma 1.6]{FedderFPureRat}.  The quasi-Gorenstein or $\bQ$-Gorenstein assumption implies that the right side of the equation is a free rank-one $F^e_* R$-module.
\end{proof}

\begin{remark}
If one fixes a generator $T$ of $\Hom_S(F^e_* S, S)$, one can then view the element $f_e$ as an $S$-module map $F^e_* S \rightarrow S$ that sends $F^e_* I$ into $I$.
\end{remark}


\begin{observation}
Suppose (in the situation of Lemma \ref{LemmaHomIsCanonical}) that $X$ is the spectrum of a local ring, that $\Delta = 0$, and that $\O_X((p^e - 1) K_X)$ is a free rank-one $F^e_* \O_X$-module.  Therefore,  $\sHom_{\O_X}(F^e_* \O_X, \O_X)$ has a generator $T$.  If one composes $T$ with its pushforward $F^e_* T : F^{2e}_* \O_X \rightarrow F^e_* \O_X$, one obtains a map
\begin{equation}
\label{CompositionEquation}
T_{2e} = T \circ F^e_* T : F^{2e}_* \O_X \rightarrow \O_X.
\end{equation}
One can then ask whether that composition is a generator of the rank-one locally free $F^{2e}_* R$-module $\sHom_{\O_X}(F^{2e}_* \O_X, \O_X)$?  What can be said in the case that $\Delta \neq 0$?  It turns out that the composition is indeed a generator (and in the case when $\Delta \neq 0$ as well).  One can prove this using local duality, however it is no more difficult (and certainly more satisfying) to prove it directly.  First however, let us compute a specific example.
\end{observation}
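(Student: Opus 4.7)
The proof is local, so assume $(R, \bm)$ is local. Since $p^{2e} - 1 = (p^e - 1)(p^e + 1)$, the hypothesis that $(p^e - 1)(K_X + \Delta)$ is Cartier gives that $(p^{2e} - 1)(K_X + \Delta)$ is Cartier as well; thus by Lemma \ref{LemmaHomIsCanonical}, both $\mathcal{L} := \sHom_R(F^e_* R((p^e - 1)\Delta), R)$ and $\mathcal{L}' := \sHom_R(F^{2e}_* R((p^{2e} - 1)\Delta), R)$ are rank-one free modules over $F^e_* R$ and $F^{2e}_* R$ respectively. We need to show that $T_{2e} := T \circ F^e_* T$ generates $\mathcal{L}'$.

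The plan is to invoke the Hom-tensor adjunction coming from the fact that $\sHom_R(F^e_* R, -)$ is the right adjoint of $F^e_*$ on $R$-modules (here the inner Hom carries the $R$-action $(r \cdot f)(F^e_* s) := f(F^e_*(rs))$, which is the one forced by the adjunction). Applied to our setting, this yields a natural isomorphism $\mathcal{L}' \cong \sHom_R(F^e_* R((p^e - 1)\Delta), \mathcal{L})$; keeping track of how the $\Delta$-twist propagates is routine given Lemma \ref{LemmaHomIsCanonical}. Unwinding the adjunction, a direct calculation (using the $R$-linearity identity $T(F^e_*(r^{p^e} a)) = rT(F^e_* a)$) shows that $T_{2e}$ corresponds to the map $\tilde{T}_{2e} : F^e_* a \mapsto F^e_* T(F^e_* a) \cdot T$.

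To conclude, let $\tilde\psi \in \sHom_R(F^e_* R((p^e - 1)\Delta), \mathcal{L})$ be arbitrary. Since $\mathcal{L} = F^e_* R \cdot T$, we may write $\tilde\psi(F^e_* a) = F^e_* g(a) \cdot T$ for some $R$-linear $g : F^e_* R((p^e - 1)\Delta) \to R$, i.e.\ $g \in \mathcal{L}$. Using the hypothesis that $T$ generates $\mathcal{L}$ as an $F^e_* R$-module, we have $g = F^e_* b \cdot T$ for a unique $b \in R$, so $g(a) = T(F^e_*(ba))$. A short check then shows, via the adjunction and the transported $F^{2e}_* R$-action, that $\tilde\psi = F^{2e}_* b \cdot \tilde{T}_{2e}$; since $\tilde\psi$ was arbitrary, $T_{2e}$ generates $\mathcal{L}'$. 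The main technical hurdle is careful bookkeeping of the several $R$-module structures in play (the twisted $R$-action on $F^e_* R$, the ring action on $\mathcal{L}$ from the adjunction, and the $F^{2e}_* R$-action transported across the adjunction), but none of this is obstructed by the presence of $\Delta$, so the same argument handles both $\Delta = 0$ and $\Delta \neq 0$.
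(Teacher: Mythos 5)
Your high-level strategy---invoking the Hom-tensor adjunction for the finite map $R \to F^e_* R$ to reduce the $2e$-th level composition to an $e$-th level statement---is exactly what drives the paper's Lemma~\ref{LemmaCompositionGivesGens}, and your explicit formula $\tilde T_{2e}(F^e_* a) = F^e_*\bigl(T(F^e_* a)\bigr)\cdot T$ is the right shape.  However there is a genuine gap in how the $\Delta$-twist is handled, and the specific isomorphism you assert does not hold as written.

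The claim $\mathcal{L}' \cong \sHom_R(F^e_* R((p^e - 1)\Delta), \mathcal{L})$ is incorrect on several counts.  Applying the usual change-of-rings adjunction to $R \to F^e_* R$, $M = F^{2e}_* R((p^{2e}-1)\Delta)$, $N = R$ gives
\[
\mathcal{L}' \;\cong\; \Hom_{F^e_* R}\bigl(F^{2e}_* R((p^{2e}-1)\Delta), \Hom_R(F^e_* R, R)\bigr),
\]
where the outer Hom is over $F^e_* R$ (not $R$), the source still carries the full $(p^{2e}-1)\Delta$-twist (not $(p^e-1)\Delta$), and the inner target is the untwisted $\Hom_R(F^e_* R, R)$ rather than $\mathcal{L}$.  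To get from here to something in which $\mathcal{L}$ appears and only the $(p^e-1)\Delta$ twist survives, one must split $(p^{2e}-1)\Delta = (p^e-1)\Delta + p^e(p^e-1)\Delta$ and push $p^e(p^e-1)\Delta = (F^e)^*\bigl((p^e-1)\Delta\bigr)$ out via the projection formula.  This step is exactly where the claim ``routine bookkeeping'' breaks down: the projection formula requires $(p^e-1)\Delta$ (or the relevant combination with $K_X$) to be \emph{Cartier}, whereas the hypothesis from Lemma~\ref{LemmaHomIsCanonical} only makes $(p^e-1)(K_X+\Delta)$ Cartier.  The divisor $(p^e-1)\Delta$ alone is in general only a reflexive Weil divisor, and the requisite sheaf identities fail at the level of coherent sheaves on $X$.

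This is precisely why the paper proceeds differently in Corollary~\ref{CorEveryMapComposesWithAGenerator}: one first observes that all the modules involved are rank-one reflexive over the appropriate $F^{ne}_* \O_X$, so it suffices to check the surjectivity of the composition map at codimension-one points of $X$.  There the local ring is a DVR---hence Gorenstein---so Lemma~\ref{LemmaCompositionGivesGens} applies cleanly when $\Delta$ misses $\gamma$, and when $\Delta$ has a local equation $z^t$ at $\gamma$ the twist computation reduces to the elementary arithmetic $p^d\lceil(p^e-1)t\rceil + \lceil(p^d-1)t\rceil \geq \lceil(p^{d+e}-1)t\rceil$.  Your proposal should be repaired either by restricting to codimension one in the same way, or by first proving the $\Delta = 0$ statement (which your adjunction argument does correctly handle, essentially reproving Lemma~\ref{LemmaCompositionGivesGens}) and then accounting for the $\Delta$-twist separately by the codimension-one reduction.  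As written, the crucial identity is asserted with the wrong base ring, wrong twist, and wrong inner module, and the ``routine'' label hides the place where the Cartier hypothesis would actually be needed.
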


\begin{example}
\label{ExampleRegularCase}
Consider the case when $X = \Spec \bF_p[x_1, \ldots, x_n] = \Spec R$ and choose $T_e$ to be the generator of $\sHom_{R}(F^e_*R, R)$ of the form
\[
T_e(x_1^{l_1} x_2^{l_2} \dots x_n^{l_n}) = \left\{ \begin{array}{l l} 1, & \text{if }  l_1 = l_2 = \ldots = l_n = p^e - 1 \\ 0, & \text{whenever $l_i \leq p^e - 1$ for all $i$ and $l_i < p^e - 1$ for some $i$} \end{array} \right.
\]
Now consider $T_e \circ F^e_* T_e$, we claim it is equal to $T_{2e}$.  Consider a monomial $m = x_1^{l_1} x_2^{l_2} \dots x_n^{l_n}$ such that $l_i \leq p^{2e} - 1$.  We can write
\[
m = (x_1^{k_1})^{p^e}(x_1^{j_1}) (x_2^{k_2})^{p^e}(x_2^{j_2}) \dots (x_n^{k_n})^{p^e}(x_n^{j_n}),
\]
where $k_i, j_i < p^e$ are integers.
This implies that $T_e(F^e_* T_e(m)) =  T_e(x_1^{k_1} \dots x_n^{k_n} T_e(x_1^{j_1} \dots x_n^{j_n}))$.  The claim is then easily verified since $p^e(p^e - 1) + (p^e - 1) = (p^{2e} - 1)$.
\end{example}

\begin{remark}
In the context of Example \ref{ExampleRegularCase}, it follows that $T_e(F^e_* I) = I^{[1/p^e]}$, where $I^{[1/p^e]}$ is the smallest ideal $J$ such that $I \subseteq J^{[p^e]}$; see \cite{BlickleMustataSmithDiscretenessAndRationalityOfFThresholds}.  This was well known to experts.

\end{remark}

In fact, Example \ref{ExampleRegularCase} above is a special case of the following lemma (that is known to experts) which uses $\Hom$-$\tensor$ adjointness.  For example, it is closely related to \cite[Appendix F.17(a)]{KunzKahlerDifferentials}.

\begin{lemma}
\label{LemmaCompositionGivesGens}
Suppose that $R \rightarrow S$ is a finite map of rings such that $\Hom_R(S, R)$ is isomorphic to $S$ as an $S$-module.
Further suppose that $M$ is a finite $S$-module.

Then the natural map
\begin{equation}
\label{EquationCompositionMap}
 \Hom_S(M, S) \times \Hom_R(S, R) \to \Hom_R(M, R)
\end{equation}
induced by composition is surjective.
\end{lemma}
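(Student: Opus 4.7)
The plan is to prove this by applying Hom-tensor adjunction, which is the natural tool given the hypothesis that $\Hom_R(S,R) \cong S$ as $S$-modules. The key observation is that the adjunction already gives a bijective reformulation of $\Hom_R(M,R)$; the composition map in the statement is essentially this bijection written through a chosen generator.

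First I would write down the adjunction isomorphism
\[
\Hom_R(M,R) \;\cong\; \Hom_S(M, \Hom_R(S,R)),
\]
where on the right $\Hom_R(S,R)$ is regarded as an $S$-module via $(t \cdot f)(s) = f(ts)$. Concretely the isomorphism sends $\alpha \in \Hom_R(M,R)$ to $\tilde\alpha$ with $\tilde\alpha(m)(s) = \alpha(sm)$. Next I would use the hypothesis to fix an $S$-module generator $\Phi \in \Hom_R(S,R)$, giving an $S$-linear isomorphism $S \xrightarrow{\sim} \Hom_R(S,R)$ by $s \mapsto s\cdot\Phi$. Post-composing the adjunction isomorphism with the inverse of this yields an isomorphism $\Hom_R(M,R) \cong \Hom_S(M,S)$.

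The final step is to trace an element $\alpha \in \Hom_R(M,R)$ through this chain and verify that the resulting $\psi \in \Hom_S(M,S)$ satisfies $\Phi \circ \psi = \alpha$. Given $\alpha$, for each $m \in M$ there is a unique $s_m \in S$ with $\tilde\alpha(m) = s_m \cdot \Phi$; set $\psi(m) = s_m$. Evaluating at $1$ gives
\[
\alpha(m) \;=\; \tilde\alpha(m)(1) \;=\; (s_m \cdot \Phi)(1) \;=\; \Phi(s_m) \;=\; (\Phi \circ \psi)(m),
\]
so every $\alpha$ lies in the image of the composition map, proving surjectivity. (In fact the argument shows that fixing $\Phi$ gives a surjection $\Hom_S(M,S) \twoheadrightarrow \Hom_R(M,R)$ with $\psi \mapsto \Phi \circ \psi$, which is even an isomorphism.)

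There is essentially no obstacle beyond bookkeeping: the only subtlety is checking that the $S$-action on $\Hom_R(S,R)$ used in the adjunction matches the one under which $\Hom_R(S,R)$ is free of rank one, but both actions are the standard one coming from multiplication on the source $S$, so this is immediate. The finite generation of $M$ is not used in the argument; the lemma holds with no finiteness hypothesis on $M$.
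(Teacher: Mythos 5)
Your proof is correct and follows essentially the same route as the paper's: both reduce to Hom--tensor adjointness $\Hom_R(M,R) \cong \Hom_S(M,\Hom_R(S,R))$, pick a generator of $\Hom_R(S,R) \cong S$, and trace an element through to exhibit the desired factorization. Your side remark that the composition map is actually a bijection $\Hom_S(M,S)\xrightarrow{\sim}\Hom_R(M,R)$, and that finiteness of $M$ is never used, is accurate and a nice sharpening.
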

\begin{proof}
First, set $\alpha$ to be a generator (as an $S$-module) of $\Hom_R(S, R)$.  Suppose we are given $f \in \Hom_R(M, R) \cong \Hom_R(M \tensor_S S, R)$.  We wish to write it as a composition.

Using adjointness, this $f$ induces an element $\Phi(f) \in \Hom_S(M, \Hom_R(S, R))$.  Just as with the usual Hom-Tensor adjointness, we define $\Phi(f)$ by the following rule:
\[
 (\Phi(f)(t))(s) = f(t \tensor s) = f(st) \text{ for $t \in M$, $s \in S$}.
\]
Therefore, since $\Hom_R(S, R)$ is generated by $\alpha$, for each $f$ and $t \in M$ as above, we associate a unique element
$a_{f, t} \in S$ with the property that $(\Phi(f)(t))(\blank) = \alpha(a_{f, t} \blank)$.

Thus using the isomorphism $\Hom_R(S, R) \cong S$, induced by sending $\alpha$ to $1$, we obtain a map $\Psi : \Hom_R(M, R) \rightarrow \Hom_S(M, S)$ given by $\Psi(f)(t) = a_{f, t}$.  

We now consider $\alpha \circ (\Psi(f))$.
However,
\[
\alpha(\Psi(f)(t)) = \alpha(a_{f, t}) = (\Phi(f)(t))(1) = f(t).
\]
Therefore $f = \alpha \circ (\Psi(f))$ and we see that the map (\ref{EquationCompositionMap}) is surjective as desired.
\end{proof}

We need a certain variant of this in the context of pairs.

\begin{corollary}
\label{CorEveryMapComposesWithAGenerator}
Suppose that $(X, \Delta)$ is a pair and that $K_X + \Delta$ is $(p^e - 1)$-Cartier.  Then for every $d > 0$ the natural map $\Psi$,
\begin{align*}
& \sHom_{F^{e}_*\O_X} (F^{e+d}_* \O_X( \lceil (p^d - 1) \Delta \rceil), F^e_* \O_X) \\
& \hspace{30pt} \tensor_{F^e_* \O_X} \sHom_{\O_X}(F^e_* \O_X( (p^e - 1) \Delta), \O_X) \\
& \cong \sHom_{F^{e}_*\O_X}(F^{e+d}_* \O_X( \lceil (p^{e + d} - 1) \Delta \rceil), F^e_* \O_X( (p^e - 1) \Delta) ) \\
& \hspace{30pt} \tensor_{F^e_* \O_X} \sHom_{\O_X}(F^e_* \O_X( (p^e - 1) \Delta), \O_X)\\
& \rightarrow \sHom_{\O_X}(F^{e+d}_* \O_X( \lceil (p^{e + d} - 1) \Delta \rceil), \O_X )
\end{align*}
induced by composition, is an isomorphism.

In other words, locally, every map $\phi : F^{e+d}_* \O_X( \lceil (p^{e + d} - 1) \Delta \rceil) \rightarrow \O_X$ factors through some scaling of the (local) $F^e_* \O_X$-generator of
\[
\sHom_{\O_X}(F^e_* \O_X( (p^e - 1) \Delta), \O_X).
\]
\end{corollary}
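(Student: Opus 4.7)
The plan is to reduce to the local setting and exhibit the composition map as an instance of Hom--tensor adjunction, in the spirit of the proof of Lemma \ref{LemmaCompositionGivesGens}. Since the claim is local on $X$, I would first assume $X$ is the spectrum of a normal $F$-finite local ring. Under this reduction, Lemma \ref{LemmaHomIsCanonical} guarantees that $\sHom_{\O_X}(F^e_* \O_X((p^e-1)\Delta), \O_X)$ is a free rank-one $F^e_* \O_X$-module, and I fix a generator $\alpha$.

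For the initial rewriting (the $\cong$ between lines one and two of the statement), I would use that the hypothesis $(p^e - 1)(K_X + \Delta)$ Cartier, combined with $(p^e - 1)K_X$ being an integral Weil divisor, forces $(p^e - 1)\Delta$ to be integral. Hence $p^d(p^e - 1)\Delta$ is integer-valued and the identity $\lceil (p^{e+d}-1)\Delta \rceil - p^d (p^e - 1)\Delta = \lceil (p^d - 1)\Delta \rceil$ holds on the nose. Applying the duality computation from the proof of Lemma \ref{LemmaHomIsCanonical} to the Frobenius $F^d$ over $\Spec F^e_*\O_X$, this divisor identity translates directly into the sheaf identification
\[
\sHom_{F^e_*\O_X}(F^{e+d}_*\O_X(\lceil (p^d-1)\Delta \rceil), F^e_*\O_X) \cong \sHom_{F^e_*\O_X}(F^{e+d}_*\O_X(\lceil (p^{e+d}-1)\Delta \rceil), F^e_*\O_X((p^e-1)\Delta)),
\]
since both sides compute to $F^{e+d}_*\O_X(-\lceil (p^d - 1)\Delta\rceil - (p^d - 1)K_X)$ locally.

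With the rewriting in hand, set $M := F^{e+d}_*\O_X(\lceil(p^{e+d} - 1)\Delta\rceil)$ and $N := F^e_*\O_X((p^e - 1)\Delta)$. Because $\sHom_{\O_X}(N, \O_X)$ is freely generated over $F^e_*\O_X$ by $\alpha$, the composition map reduces to the assertion that $\phi \mapsto \alpha \circ \phi$ is a bijection $\sHom_{F^e_*\O_X}(M, N) \to \sHom_{\O_X}(M, \O_X)$. Hom--tensor adjunction gives a natural isomorphism $\sHom_{\O_X}(M, \O_X) \cong \sHom_{F^e_*\O_X}(M, \sHom_{\O_X}(F^e_*\O_X, \O_X))$, and the rank-one reflexive $F^e_*\O_X$-module $N$ satisfies $N \cong N^{\vee\vee} \cong \sHom_{\O_X}(F^e_*\O_X, \O_X)$ (using the iso $\alpha \leftrightarrow 1$ from $\sHom_{\O_X}(N, \O_X) \cong F^e_*\O_X$ and reflexivity on the normal scheme $X$). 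Combining these two identifications shows the composition map is an isomorphism; a diagram chase of the same form as at the end of the proof of Lemma \ref{LemmaCompositionGivesGens} confirms the resulting natural map is indeed $\phi \mapsto \alpha \circ \phi$.

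The main technical point, and essentially the only delicate one, is verifying that the reflexivity identification $N \cong \sHom_{\O_X}(F^e_*\O_X, \O_X)$ is compatible with the chosen generator $\alpha$, so that the abstract adjunction iso on the nose becomes composition with $\alpha$. Everything else amounts to formal manipulations with divisors and adjunction.
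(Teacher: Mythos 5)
Your proof is correct, but it takes a genuinely different route from the paper's. The paper observes that $\Psi$ is a nonzero map of rank-one reflexive $F^{e+d}_*\O_X$-modules (hence automatically injective), reduces the surjectivity check to codimension one where $X$ is Gorenstein, fixes there the explicit local generator $T_1$ of $\sHom_{\O_X}(F_*\O_X, \O_X)$, quotes Lemma \ref{LemmaCompositionGivesGens} to conclude its iterates $T_n$ are generators, and then handles the $\Delta$-twist by a direct computation writing $\Delta$ locally as $z^t$ and verifying $T_e(z^{(p^e-1)t}(F^d_* T_d(z^{\lceil(p^d-1)t\rceil}\blank))) = T_{e+d}(z^{\lceil(p^{d+e}-1)t\rceil}\blank)$. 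You instead remain in the local setting and re-run the Hom--tensor adjunction of Lemma \ref{LemmaCompositionGivesGens} directly in the $\Delta$-twisted situation, the decisive point being the identification of $N = F^e_*\O_X((p^e-1)\Delta)$ with $\sHom_{\O_X}(F^e_*\O_X,\O_X)$ so that adjunction exhibits composition with $\alpha$ as an isomorphism. This buys a more structural, coordinate-free argument that avoids both the codimension-one reduction and the explicit $z^t$ calculation, at the cost of the compatibility check you rightly flag as the delicate step. One small imprecision to tidy: the chain $N \cong N^{\vee\vee} \cong \sHom_{\O_X}(F^e_*\O_X, \O_X)$ does not follow from biduality alone---the isomorphism of these two rank-one reflexives comes from the freeness of $\sHom_{F^e_*\O_X}(N, \sHom_{\O_X}(F^e_*\O_X, \O_X))$, obtained by combining Lemma \ref{LemmaHomIsCanonical} with the adjunction---but the parenthetical you include shows you have the right mechanism in mind.
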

\begin{proof}
Notice that the map $\Psi$ we are considering is a map of rank-one reflexive (that is, rank-one S2) $F^{e+d}_* \O_X$ sheaves and thus it is injective (since it is not zero).  So to show it is an isomorphism, it is sufficient to show it is surjective in codimension one.  Therefore we may consider the statement at the generic point $\gamma$ of a codimension 1 subvariety (locally, this is localizing at a height one prime).  Since $X$ is Gorenstein in codimension one, we see that
\[
\left(\sHom_{\O_X}(F^{1}_* \O_X, \O_X)\right)_{\gamma}
\]
is a free rank-one $F^1_* \O_X$-module.  We fix a generator $T_1$ and set $T_n$ to be the generator of
\[
\left(\sHom_{\O_X}(F^{n}_* \O_X, \O_X)\right)_{\gamma}
\]
obtained by composing $T_1$ with itself $(n-1)$-times just as in Equation \ref{CompositionEquation} ($T_n$ is a generator by Lemma \ref{LemmaCompositionGivesGens}).

If $\Delta$ does not contain the point $\gamma$ in its support, we are done by the previous lemma.  On the other hand, if $\Delta$ contains $\gamma$ in its support, then we may express $\Delta$ at the stalk of $\eta$ locally as $z^t$ (where $t$ is a rational number with denominator a factor of $p^e - 1$).  Then we notice that
\[
\begin{split}
T_e( z^{(p^e - 1)t} (F^d_* T_d(z_i^{\lceil (p^d - 1) t\rceil} \blank))) = T_e(F^d_* T_d (z^{\lceil p^d(p^e - 1)t + (p^d - 1)t\rceil} \blank) \\
= T_e (F^d_* T_d( z^{\lceil (p^{d+e} - 1)t \rceil} \blank)) = T_{e+d}(z^{\lceil (p^{d+e} - 1)t \rceil} \blank).
\end{split}
\]
This proves the corollary since for any $n > 0$, $T_n(z^{\lceil (p^n - 1)t \rceil} \blank)$ generates the image of the $F^n_* \O_{X, \gamma}$-module $\left(\sHom_{\O_X}(F^n_* \O_X(\lceil (p^n - 1)\Delta \rceil), \O_X) \right)_{\gamma}$ inside $\left( \sHom_{\O_X}(F^n_* \O_X, \O_X)\right)_{\gamma}$.

\end{proof}

We are now ready to explicitly relate $\phi : F^e_* \O_X \rightarrow \O_X$ to a $\bQ$-divisor $\Delta$.  As mentioned before, parts of this theorem were likely known to experts, but to my knowledge, it has not been written down in the language of $\bQ$-divisors.

\begin{theorem}
\label{TheoremMapsInduceDivisors}
Suppose that $R$ is a normal $F$-finite ring.
For every map $\phi : F^e_* R \rightarrow R$, there exists an effective $\bQ$-divisor $\Delta = \Delta_{\phi}$ on $X = \Spec R$ such that:
\begin{itemize}
\item[(a)]  $(p^e - 1) \Delta$ is an integral divisor.
\item[(b)]  $(p^e - 1)(K_X + \Delta)$ is a Cartier divisor and $\Hom_{R}(F^e_* R( (p^e - 1) \Delta), R) \cong F^e_* R$.
\item[(c)]  The natural map $F^e_* R \cong \Hom_{R}(F^e_* R( (p^e - 1) \Delta), R) \rightarrow \Hom_{R}(F^e_* R, R)$ sends some $F^e_* R$-module generator of $\Hom_{R}(F^e_* R( (p^e - 1) \Delta), R)$ to $\phi$.
\item[(d)]  The map $\phi$ is surjective if and only if the pair $(R, \Delta)$ is sharply $F$-pure.
\item[(e)]  The composition map
\[
\phi_{(n+1)e} = \phi \circ F^{e}_* \phi \circ F^{2e}_* \phi \circ \ldots \circ F^{ne}_* \phi
\]
also determines the same divisor $\Delta$.
\item[(f)]  Another map $\phi' : F^{e'}_* R \rightarrow R$ determines the same $\bQ$-divisor $\Delta$ if and only if for some positive integers $n$ and $n'$ such that $(n+1)e = (n' + 1) e'$ (equivalently, for every such pair of integers) there exists a unit $u \in R$ so that
    \[
        \phi \circ F^{e}_* \phi \circ F^{2e}_* \phi \circ \ldots \circ F^{ne}_* \phi (u x) =
         \phi' \circ F^{e'}_* \phi' \circ F^{2e'}_* \phi' \circ \ldots \circ F^{n'e'}_* \phi'(x).
    \]
    for all $x \in R$.
    In other words, $\phi$ and $\phi'$ determine the same divisor if and only if $\phi$ composed with itself $n$-times is a unit multiple of $\phi'$ composed with itself $n'$-times.
\end{itemize}
\end{theorem}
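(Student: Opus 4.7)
The plan is to build $\Delta_\phi$ via Proposition \ref{PropSectionsAreSameAsEmbeddings}. By Lemma \ref{LemmaHomIsCanonical} with $\Delta = 0$ (on a sufficiently small affine chart so that the ambiguity of Remark \ref{RemHaveToBeCareful} is resolved) we have $\Hom_R(F^e_* R, R) \cong F^e_* R((1-p^e)K_X)$, so $\phi$ corresponds to a non-degenerate global section of $\O_X((1-p^e)K_X)$. By Proposition \ref{PropSectionsAreSameAsEmbeddings} this section corresponds (uniquely up to units) to an effective Weil divisor $D \sim (1-p^e)K_X$, and we set $\Delta_\phi := \frac{1}{p^e-1}D$. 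With this in hand, (a) is immediate, (b) follows because $(p^e-1)(K_X + \Delta_\phi) = (p^e-1)K_X + D$ is a principal divisor (hence Cartier) combined with Lemma \ref{LemmaHomIsCanonical}, and (c) is built into the construction: the generator $1$ of $\Hom_R(F^e_* R((p^e-1)\Delta_\phi), R) \cong F^e_* R$ corresponds under the identifications above to the section cutting out $D$, which is precisely $\phi$.

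The genuine obstacle and main step is (e); parts (d) and (f) will bootstrap off of it. For (e) I reduce to a computation at codimension one: at the generic point $\gamma$ of a prime divisor, $R_\gamma$ is a DVR with uniformizer $z$, and fixing compatible generators $T_{me}$ of $\Hom_{R_\gamma}(F^{me}_* R_\gamma, R_\gamma)$ built by iterated self-composition of a single $T_e$ (as in the proof of Corollary \ref{CorEveryMapComposesWithAGenerator}) lets us write $\phi$ locally as $T_e(z^k \cdot \blank)$, where $k = (p^e-1)\cdot\operatorname{coeff}_\gamma(\Delta_\phi)$. The elementary calculation from that proof then yields, locally at $\gamma$, $\phi^{(n)} := \phi \circ F^e_*\phi \circ \cdots \circ F^{(n-1)e}_*\phi = T_{ne}(z^{k(1 + p^e + \cdots + p^{(n-1)e})} \cdot \blank) = T_{ne}(z^{k(p^{ne}-1)/(p^e-1)} \cdot \blank)$, so the coefficient of $\gamma$ in $\Delta_{\phi^{(n)}}$ equals $k/(p^e-1)$, matching $\Delta_\phi$. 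Since this holds at every prime divisor and both divisors are Weil, $\Delta_{\phi^{(n)}} = \Delta_\phi$.

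For (d), if $\phi$ is surjective then by (c) the extension $\widetilde\phi \in \Hom_R(F^e_* R((p^e-1)\Delta_\phi), R)$ satisfies $\widetilde\phi|_{F^e_*R} = \phi$, so $\widetilde\phi(F^e_*R) = R$ and $(R, \Delta_\phi)$ is sharply $F$-pure directly from the definition (take $a = 1$). Conversely, suppose $(R, \Delta_\phi)$ is sharply $F$-pure at some level $e'$; iterating as in the remark after Definition \ref{DefnStronglyFRegularSharplyFPure} produces, at the common level $ee'$, a map $\phi_0 \in \Hom_R(F^{ee'}_* R((p^{ee'}-1)\Delta_\phi), R)$ with $\phi_0(F^{ee'}_*R) = R$. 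By (e) combined with (c), the extension $\widetilde{\phi^{(e')}}$ is a generator of this rank-one free $F^{ee'}_*R$-module (Lemma \ref{LemmaHomIsCanonical}), so $\phi_0 = \widetilde{\phi^{(e')}}(h \cdot \blank)$ for some $h \in F^{ee'}_*R$; restricting to $F^{ee'}_*R$ gives $\phi^{(e')}(h\cdot\blank) = \phi_0|_{F^{ee'}_*R}$, which is surjective. Hence $\phi^{(e')}$ is surjective, and since $\phi^{(e')} = \phi \circ F^e_*\phi^{(e'-1)}$, so is $\phi$.

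For (f), if $\phi$ and $\phi'$ produce the same $\Delta$, pick $M$ with $M = (n+1)e = (n'+1)e'$; by (e) combined with (c), the extensions of $\phi^{(n+1)}$ and $\phi'^{(n'+1)}$ are both $F^M_*R$-generators of the rank-one free module $\Hom_R(F^M_*R((p^M-1)\Delta), R)$, so they differ by a unit of $R$, yielding $\phi^{(n+1)}(u\cdot\blank) = \phi'^{(n'+1)}$ for an appropriate unit $u$. The converse is immediate: pre-multiplication by a unit scales the corresponding section by a unit and hence does not change its divisor of zeros, so by (e) both sides determine the same $\Delta$.
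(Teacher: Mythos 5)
Your proposal is correct and follows essentially the same route as the paper: you construct $\Delta_\phi$ from the non-degenerate section of $\O_X((1-p^e)K_X)$ corresponding to $\phi$ (equivalently, from the $F^e_*R$-module map $1\mapsto\phi$ into $\Hom_R(F^e_*R,R)$), verify (c), (e), (f) by working at codimension-one points via the iterated-composition identity for the local generators $T_{me}$, and prove (d) by passing back and forth along $F^e_*R\hookrightarrow F^e_*R((p^e-1)\Delta)$. Your treatment of the converse of (d) is slightly more careful than the paper's terse version, as you explicitly pass to the common level $ee'$ via (e) to handle the case where sharp $F$-purity is witnessed at some $e'\ne e$.
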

\begin{proof}
A map $\phi : F^e_* R \rightarrow R$ uniquely determines the map of $F^e_* R$-modules $\Phi : F^e_* R \rightarrow  \Hom_R(F^e_* R, R)$ which sends $1$ to $\phi$.  This can also be viewed as applying the functor $\Hom_R(\blank, R)$ to $\phi$ and factoring the map
\begin{equation}
\label{EquationFactoring}
\xymatrix{
R \ar[r]^-{\sim} \ar[dr]_-{F^e} & \Hom_R(R, R) \ar[r]^-{\phi^{\vee}} &\Hom_R(F^e_* R, R) \\
& F^e_* R \ar@{.>}[ur]_{1 \mapsto \phi} &\\
}
\end{equation}
through $F^e_* R$.  We know that $\Hom_R(F^e_* R, R) \cong F^e_* R((1-p^e)K_X + M)$ for some Cartier divisor $M$ (in many cases $M$ is zero, see Remark \ref{RemHaveToBeCareful}).  Therefore, the map $\Phi$ determines an effective divisor $D$ which is linearly equivalent to $(1 - p^e) K_X + M$, see \cite{Hartshorne} and Proposition \ref{PropSectionsAreSameAsEmbeddings}. Set
\[
\Delta := {1 \over p^e - 1}D.
\]
Clearly property (a) is satisfied.  For the first part of (b), simply note that
$(p^e - 1)(K_X + \Delta) = (p^e - 1)K_X + D \sim (p^e - 1)K_X + (1-p^e)K_X + M = M$.
For the second part of (b), observe that
\[
\begin{split}
\Hom_{R}(F^e_* R( (p^e - 1) \Delta), R) \cong F^e_* R((1-p^e)K_X + M - (p^e - 1) \Delta) \\
\cong F^e_* R((1-p^e)K_X + M  - D) \cong F^e_* R.
\end{split}
\]

Let us now prove (c).  At height one primes $\gamma$, the map $\Phi : F^e_* R_{\gamma} \rightarrow \Hom_R(F^e_* R, R)_{\gamma} \simeq F^e_* R_{\gamma}$ as above, is multiplication (as an $F^e_* R$-module) by a generator of $D$.  But so is the map from (c), $\Psi : F^e_* R \cong \Hom_{R}(F^e_* R( (p^e - 1) \Delta), R) \rightarrow \Hom_{R}(F^e_* R, R)$.  Note that all the modules involved are rank 1 reflexive $F^e_* \O_X$-modules and that the domains of $\Phi$ and $\Psi$ are isomorphic.  Therefore the maps $\Phi$ and $\Psi$ induce the same divisors and so $\Phi$ and $\Psi$ can be identified (for an appropriate choice of isomorphism $\Hom_{R}(F^e_* R( (p^e - 1) \Delta), R) \cong F^e_* R$).  Part (c) then follows.

To prove (d), suppose first that $\phi$ is surjective, or equivalently that $1$ is in $\phi$'s image.  Then there exists an $R$-module map $\alpha$ so that the composition $\xymatrix{ R \ar[r]^-{\alpha} & F^e_* R \ar[r]^-{\phi} & R}$ is the identity.  Apply $\Hom_R(\blank, R)$ to the diagram \ref{EquationFactoring}.  This gives a diagram:
\[
\xymatrix{
R \ar@{<-}[rr]^-{\phi} \ar@{<-}[dr] &  & \Hom_R(\Hom_R(F^e_* R, R), R) \ar@{<->}[r]^-{\sim} & F^e_* R \\
& \Hom_R(F^e_* R, R) \ar@{<->}[d]^-{\sim} \ar@{<.}[ur] &\\
& F^e_* R(D)
}
\]
and so we can factor $\phi$ as $F^e_* R \rightarrow F^e_* R(D) \rightarrow R$. This proves that $(R, \Delta)$ is a sharply $F$-pure pair.  Conversely, suppose that $(R, \Delta)$ is sharply $F$-pure, then a single (equivalently every) generator $\alpha$ of $\Hom_{R}(F^e_* R( (p^e - 1) \Delta), R)$ satisfies $\alpha(F^e_* R) = R$.  But $\phi$ is such a generator so $\phi(F^e_* R) = R$.

We now prove (e).  It is enough to check the statement at a height one prime $\gamma$.  We know that $\Hom_R(F^e_* R, R)_{\gamma}$ is locally free of rank one with generator $T_e$.  We then see that $\phi_{\gamma}(\blank) = T_e(d \blank)$ where $d$ is a defining equation for $D$ when localized at $\gamma$.  Composing this with itself $n$-times, we obtain the map
\[
\phi_{\gamma} \circ F^{e}_* \phi_{\gamma} \circ F^{2e}_* \phi_{\gamma} \circ \ldots \circ F^{ne}_* \phi_{\gamma} (F^{(n+1)e}_* z) = T_{(n+1)e}(F^{(n+1)e}_* d^{p^{ne} + p^{(n-1)e} + \dots + p^e + 1} z).
\]
But now we notice that ${1 \over p^{(n+1)e} - 1} (p^{ne} + p^{(n-1)e} + \dots + p^e + 1) D$ is equal to ${1 \over p^e - 1} D$.

Finally, we prove (f).  First note that changing a map by pre-composing with multiplication by a unit does not change the associated divisor.  Therefore, if maps $\phi$ and $\phi'$ satisfy the condition on their compositions (as above), then they determine the same divisor by (e).  Conversely, suppose that the maps $\phi$ and $\phi'$ have the same associated divisor and choose $n$ and $n'$ as above.  Without loss of generality, by replacing $\phi$ and $\phi'$ with their compositions, we may assume that $e = e'$, and we simply have two maps $\phi, \phi' \in \Hom_R(F^e_* R, R)$ that determine the same divisor.  In particular, the maps
\[
\xymatrix@R=1pt{
F^e_* R \ar[r] & \Hom_R(F^e_* R, R) & & F^e_* R \ar[r] & \Hom_R(F^e_* R, R) \\
 1 \ar@{|->}[r] & \phi & &  1 \ar@{|->}[r] & \phi' \\
}
\]
induce the same embedding of $\Hom_R(F^e_* R, R)$ into the total field of fractions of $F^e_* R$.  Therefore the two maps differ by multiplication by a unit as desired, see \cite{HartshonreGeneralizedDivisorsAndBiliaison} or Proposition \ref{PropSectionsAreSameAsEmbeddings}.
\end{proof}

\begin{remark}
Note that condition (a) above is redundant in view of condition (b).
\end{remark}

\begin{theorem}
\label{TheoremDivisorsInduceMaps}
Suppose that $R$ is normal and $F$-finite as above.
For every effective $\bQ$-divisor $\Delta$ satisfying conditions (a) and (b) from Theorem \ref{TheoremMapsInduceDivisors}, there exists a map $\phi \in \Hom_R(F^e_* R, R)$ such that the divisor associated to $\phi$ is $\Delta$.
\end{theorem}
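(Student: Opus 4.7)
The plan is to construct $\phi$ by restricting a generator of the free rank-one $F^e_* R$-module $\Hom_R(F^e_* R((p^e-1)\Delta), R)$ along the natural inclusion $F^e_* R \hookrightarrow F^e_* R((p^e-1)\Delta)$, and then to verify that Theorem \ref{TheoremMapsInduceDivisors} sends this $\phi$ back to $\Delta$. By hypothesis (b), $\Hom_R(F^e_* R((p^e-1)\Delta), R)$ is a free $F^e_* R$-module of rank one, so I may choose a generator $\tilde{\phi} : F^e_* R((p^e-1)\Delta) \to R$. Since $(p^e - 1)\Delta \geq 0$ is an integral divisor, the inclusion $\iota : F^e_* R \hookrightarrow F^e_* R((p^e-1)\Delta)$ exists, and I set $\phi := \tilde{\phi} \circ \iota \in \Hom_R(F^e_* R, R)$. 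Let $\Delta_\phi$ be the $\bQ$-divisor attached to $\phi$ by Theorem \ref{TheoremMapsInduceDivisors}; the goal is to show $\Delta_\phi = \Delta$.

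The key point is that both $\Hom_R(F^e_* R((p^e-1)\Delta), R)$ and $\Hom_R(F^e_* R((p^e-1)\Delta_\phi), R)$ embed as rank-one reflexive $F^e_* R$-subsheaves of $\Hom_R(F^e_* R, R)$ via the maps dual to the obvious inclusions of the domains. By construction, the first of these subsheaves equals the $F^e_* R$-submodule generated by $\phi$, since $\tilde{\phi}$ is a generator whose restriction is $\phi$. By part (c) of Theorem \ref{TheoremMapsInduceDivisors} applied to $\phi$, the second is also the $F^e_* R$-submodule generated by $\phi$. Hence the two rank-one reflexive $F^e_* R$-subsheaves of $\Hom_R(F^e_* R, R)$ arising from $\Delta$ and from $\Delta_\phi$ actually coincide.

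The remaining (and really only) step is to conclude $\Delta = \Delta_\phi$ from this equality of subsheaves. This is a local check at every height-one prime $\gamma$ of $R$: there $R_\gamma$ is a DVR with uniformizer $z$, and $\Hom_R(F^e_* R, R)_\gamma$ is a free rank-one $F^e_* R_\gamma$-module with some generator $T$. A short calculation at this DVR shows that for any effective $\bQ$-divisor $D$ satisfying (a) and (b), the submodule $\Hom_R(F^e_* R((p^e-1)D), R)_\gamma$ equals $F^e_* R_\gamma \cdot (z^{(p^e-1) c_\gamma(D)} \cdot T)$, where $c_\gamma(D)$ is the coefficient of $D$ at $\gamma$. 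Hence equality of the submodules forces $c_\gamma(\Delta) = c_\gamma(\Delta_\phi)$ at every height-one prime, and normality of $R$ then gives $\Delta = \Delta_\phi$. The main obstacle is precisely this final DVR-level computation identifying the coefficient of $(p^e-1)D$ with the $z$-adic level of the corresponding subsheaf; everything else is a formal consequence of Theorem \ref{TheoremMapsInduceDivisors}(c).
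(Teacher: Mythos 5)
Your construction of $\phi$ — restricting a generator of $\Hom_R(F^e_* R((p^e-1)\Delta),R)$ along the inclusion $F^e_* R \hookrightarrow F^e_*R((p^e-1)\Delta)$ — is exactly the paper's construction, and your verification that $\Delta_\phi = \Delta$ (identifying the rank-one reflexive subsheaf generated by $\phi$ inside $\Hom_R(F^e_*R,R)$ in two ways via Theorem~\ref{TheoremMapsInduceDivisors}(c), then matching coefficients at height-one primes) is a more explicit unpacking of the same duality argument the paper carries out by applying $\Hom_R(\blank,R)$ to the factorization. Correct, and essentially the same approach.
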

\begin{proof}
We set $\phi$ to be the image of $1$ under the composition
\[
i \circ q \circ F^e : R \rightarrow F^e_* R \cong \Hom_R(F^e_* R((p^e - 1) \Delta), R) \rightarrow \Hom_R(F^e_* R, R),
\]
where $q$ is the isomorphism given by hypothesis, and $i$ the map induced by the inclusion $F^e_* R \subseteq F^e_* R((p^e - 1)\Delta)$.
It is straightforward to verify that applying $\Hom_R(\blank, R)$ to the above composition also explicitly constructs (and factors) $\phi$ because of the isomorphism $\Hom_R(\Hom_R(F^e_* R, R), R) \cong F^e_* R$.

Applying $\Hom_R(\blank, R)$ to this factorization of $\phi$, and using the construction from Theorem \ref{TheoremMapsInduceDivisors} gives us back $\Delta$.
\end{proof}

In summary, we have shown that for a reduced normal $F$-finite local ring $R$ there is a bijection between the sets
\[
\left\{ \begin{matrix}\text{Effective $\bQ$-divisors $\Delta$ }\\\text{such that $(p^e - 1)(K_X + \Delta)$ is Cartier}\end{matrix} \right\} \leftrightarrow \left\{ \text{Non-zero elements of $\Hom_R(F^e_* R, R)$} \right\} \Big/ \sim
\]
where the equivalence relation on the right identifies two maps $\phi$ and $\psi$ if there is a unit $u \in R$ such that $\phi(u \times \blank) = \psi(\blank)$.  There will be some discussion of how to make sense of such a correspondence in the non-local case in Remark \ref{RemarkGlobalGluing}.

One can even extend this correspondence further.  Recall that putting an $R\{F^e\}$-module structure on an $R$-module $M$ is equivalent to specifying an additive map
\[
 \phi_e : M \rightarrow M
\]
such that $\phi_e(rm) = r^{p^e} \phi_e(m)$; see \cite{LyubeznikSmithCommutationOfTestIdealWithLocalization} for additional details.  Such maps can also be identified with $R$-module maps $M \rightarrow F^e_* M$.

\begin{proposition}
Suppose that $(R, \bm)$ is a complete normal local $F$-finite ring with injective hull of the residue field $E_R$.   Then there is a bijection between the set of $R\{F^e\}$-module structures on $E_R$ and the set of elements of $\Hom_R(F^e_* R, R)$.
\end{proposition}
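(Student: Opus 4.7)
The plan is to use Matlis duality. By the observation just preceding the statement, specifying an $R\{F^e\}$-module structure on $E_R$ is the same as specifying an $R$-linear map $\psi \colon E_R \to F^e_* E_R$; thus it suffices to produce a natural bijection between $\Hom_R(E_R, F^e_* E_R)$ and $\Hom_R(F^e_* R, R)$.

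Since $(R, \bm)$ is complete local, the Matlis dual functor $(-)^{\vee} := \Hom_R(-, E_R)$ is a contravariant exact equivalence between the category of Noetherian $R$-modules and the category of Artinian $R$-modules, with $R^{\vee} \cong E_R$ and $E_R^{\vee} \cong R$. Because $R$ is $F$-finite, the Frobenius $F^e$ is module-finite, so $F^e_* R$ is Noetherian and $F^e_* E_R$ is Artinian. Consequently, the assignment $\psi \mapsto \psi^{\vee}$ gives a bijection
\[
\Hom_R(E_R, F^e_* E_R) \ \longleftrightarrow \ \Hom_R\bigl((F^e_* E_R)^{\vee},\, R\bigr).
\]

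The crux is then to identify $(F^e_* E_R)^{\vee}$ naturally with $F^e_* R$ as an $R$-module. This is a special case of the general principle that, for an $F$-finite complete local ring, Matlis duality commutes with Frobenius pushforward: there is a natural isomorphism $(F^e_* M)^{\vee} \cong F^e_*(M^{\vee})$ for $M$ Noetherian or Artinian. Applied to $M = E_R$, together with $E_R^{\vee} \cong R$, this yields the required identification. Establishing this commutation is the principal technical obstacle; the cleanest route is to invoke Grothendieck duality for the finite morphism $F^e \colon \Spec R \to \Spec R$, using that $F$-finiteness makes the exceptional inverse image $F^{e!}$ well-behaved on the dualizing complex. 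Alternatively, one can verify the identification directly, for instance via the local cohomology presentation $E_R \cong H^{\dim R}_{\bm}(\omega_R)$ (when $R$ is Cohen-Macaulay) combined with the naturality of Frobenius on local cohomology. Composing this identification with the Matlis duality bijection displayed above produces the desired bijection $\Hom_R(E_R, F^e_* E_R) \cong \Hom_R(F^e_* R, R)$.
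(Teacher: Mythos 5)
Your proof is correct and follows essentially the same route as the paper: Matlis duality over the complete local ring, pivoting on the identification of the Matlis dual of $F^e_*R$ with $F^e_*E_R$ (equivalently $(F^e_*E_R)^\vee \cong F^e_*R$). The paper presents it by dualizing $\phi : F^e_*R \to R$ directly and quoting $\Hom_R(F^e_*R,E_R) = E_{F^e_*R} = F^e_*E_R$ as a standard consequence of Matlis duality for the finite extension $R \to F^e_*R$, whereas you dualize in the other direction; these are the same argument. Two small points: the paper explicitly notes that this identification is only canonical up to a unit, so calling it ``natural'' should be read with that caveat (you implicitly acknowledge this by invoking duality for finite morphisms, which fixes the iso only up to units on the dualizing complex); and your Cohen--Macaulay caveat for the local cohomology route is unnecessary here---the clean statement is just that $\Hom_R(S, E_R)$ is the injective hull over $S$ for any module-finite local $R$-algebra $S$, applied to $S = F^e_*R$.
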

\begin{proof}
Consider a map $\phi : F^e_* R \rightarrow R$ and apply $\Hom_R(\blank, E_R)$.  This gives us a map
\[
E_R = \Hom_R(R, E_R) \rightarrow \Hom_R(F^e_* R, E_R) = E_{F^e_* R} = F^e_* E_R.
\]
Applying $\Hom_R(\blank, E_R)$ gives us back $\phi$.  Note that there are (non-canonical) choices here when we identify $F^e_* E_R$ with $\Hom_R(F^e_* R, E_R)$.  However, these are merely up to multiplication by units and so we can fix such isomorphisms.
\end{proof}

Therefore, in the case of a complete local normal ring, we have the following correspondence.
\[
\begin{split}
\left\{ \begin{matrix}\text{Effective $\bQ$-divisors $\Delta$}\\ \text{such that $(p^e - 1)(K_X + \Delta)$} \\ \text{is Cartier} \end{matrix} \right\} \leftrightarrow \left\{ \begin{matrix} \text{Nontrivial cyclic $F^e_* R$-submodules} \\ \text{of $\Hom_R(F^e_* R, R)$} \end{matrix} \right\} \\ \leftrightarrow \left\{ \begin{matrix}\text{Nonzero elements of} \\ \text{$\Hom_R(F^e_* R, R)$}  \end{matrix} \right\} \Big/ \sim \leftrightarrow \left\{ \begin{matrix} \text{Nonzero $R\{F^e\}$-module} \\ \text{structures on $E_R$} \end{matrix} \right\} \Big/ \sim
\end{split}
\]
The first equivalence relation identified two maps if they agree up to pre-composition with multiplication by a unit of $F^e_* R$ (as above).  The second equivalence relation identified two maps if they agree up to post-composition with multiplication by a unit of $F^e_* R$.

\begin{corollary}
Suppose that $S$ is a regular $F$-finite ring such that $F^e_* S$ is free as an $S$-module and that $R = S/I$ is a quotient that is normal.  Further suppose that $\Hom_R(F^e_* R, R) \cong F^e_* R$ (in particular, $R$ is $\bQ$-Gorenstein with index not divisible by $p$).  Write $(I^{[p^e]} : I) = I^{[p^e]} + (f_e)$ just as in Corollary \ref{CorDescriptionOfFedderColon}.  Then for all $n > 0$,
\[
(I^{[p^{ne}]} : I) = I^{[p^{ne}]} + (f_e^{1 + p^e + \dots + p^{(n-1)e}}).
\]
\end{corollary}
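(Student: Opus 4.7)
The plan is to translate everything via Fedder's criterion \cite[Lemma 1.6]{FedderFPureRat}, which identifies $\Hom_R(F^{me}_* R, R)$ with $(I^{[p^{me}]} : I)/I^{[p^{me}]}$ once a generator of $\Hom_S(F^{me}_* S, S)$ is fixed, and then to explicitly compute the $n$-fold self-composition of the generator of $\Hom_R(F^e_* R, R)$.

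First, fix $T_e$ a generator of the free rank-one $F^e_* S$-module $\Hom_S(F^e_* S, S)$ (which exists since $F^e_* S$ is free over $S$), and set
\[
T_{ne} := T_e \circ F^e_* T_e \circ \cdots \circ F^{(n-1)e}_* T_e.
\]
Iterated application of Lemma \ref{LemmaCompositionGivesGens} to the extension $S \to F^e_* S$ shows $T_{ne}$ is a generator of $\Hom_S(F^{ne}_* S, S)$, so under this choice Fedder's isomorphism identifies $\psi \in \Hom_R(F^{ne}_* R, R)$ with a class $g \in (I^{[p^{ne}]} : I)/I^{[p^{ne}]}$ via $\psi(z) = T_{ne}(g z) \bmod I$. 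In particular, the generator $\phi \in \Hom_R(F^e_* R, R) \cong F^e_* R$ lifts to $\tilde\phi : F^e_* S \to S$ given by $\tilde\phi(z) = T_e(f_e z)$.

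Second, compute the lift of $\phi^{(n)} := \phi \circ F^e_* \phi \circ \cdots \circ F^{(n-1)e}_* \phi$ by induction on $n$, using at each stage the defining $S$-linearity relation $s \cdot T_{je}(y) = T_{je}(s^{p^{je}} y)$. The induction step reads
\[
\widetilde{\phi^{(n)}}(x) = T_e\bigl(f_e \cdot T_{(n-1)e}(f_e^{1 + p^e + \cdots + p^{(n-2)e}} x)\bigr) = T_{ne}\bigl(f_e^{1 + p^e + \cdots + p^{(n-1)e}} x\bigr),
\]
where the second equality pulls $f_e$ inside $T_{(n-1)e}$ at the cost of the Frobenius shift $f_e \mapsto f_e^{p^{(n-1)e}}$ and then absorbs $T_e \circ F^e_* T_{(n-1)e}$ into $T_{ne}$. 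Via Fedder, $\phi^{(n)}$ therefore corresponds to the class of $f_e^{1 + p^e + \cdots + p^{(n-1)e}}$ in $(I^{[p^{ne}]}:I)/I^{[p^{ne}]}$.

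Third, observe that $\Hom_R(F^{ne}_* R, R)$ is itself a free rank-one $F^{ne}_* R$-module: the hypothesis $\Hom_R(F^e_* R, R) \cong F^e_* R$ forces $(p^e - 1)K_X$ to be Cartier, hence so is $(p^{ne} - 1)K_X = (1 + p^e + \cdots + p^{(n-1)e})(p^e - 1)K_X$, and Lemma \ref{LemmaHomIsCanonical} applies. Under the divisor/map correspondence of Theorem \ref{TheoremMapsInduceDivisors}, the generator $\phi$ corresponds to the zero divisor, and by part (e) of that theorem so does $\phi^{(n)}$, which therefore also corresponds to a generator of $\Hom_R(F^{ne}_* R, R)$. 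Combined with the Fedder identification above, $(I^{[p^{ne}]}:I)/I^{[p^{ne}]}$ is cyclically generated as an $F^{ne}_* R$-module by $f_e^{1 + p^e + \cdots + p^{(n-1)e}}$, which is the desired equality. The main obstacle is the bookkeeping in the second step: one must track the various $F^{je}_*$-module structures and apply the correct Frobenius shift $s \mapsto s^{p^{je}}$ at each layer, the telescoping sum of these shifts producing the geometric-series exponent in the conclusion.
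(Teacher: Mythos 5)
The paper states this corollary without proof, so there is no paper argument to compare against; your write-up supplies the expected argument and it is correct. The Fedder identification carries the generator $\phi$ of $\Hom_R(F^e_* R, R)$ to the class of $f_e$; your inductive lift computation $\widetilde{\phi^{(n)}}(x) = T_{ne}\bigl(f_e^{1 + p^e + \cdots + p^{(n-1)e}} x\bigr)$ is right, with the Frobenius shift $f_e \mapsto f_e^{p^{(n-1)e}}$ being exactly the cost of passing $f_e \in F^e_* S$ through the $F^e_* S$-linear map $F^e_* T_{(n-1)e}$; and Theorem \ref{TheoremMapsInduceDivisors}(b),(c),(e) applied to the composed map $\phi^{(n)}$ shows that $\phi^{(n)}$ is a generator of $\Hom_R(F^{ne}_* R, R) \cong F^{ne}_* R$, which transports back through Fedder to the asserted equality of ideals. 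One minor imprecision in your third step: Lemma \ref{LemmaHomIsCanonical} by itself only gives that $\sHom_{\O_X}(F^{ne}_* \O_X, \O_X)$ is invertible (locally free of rank one), not globally free; the global freeness you need is actually delivered by Theorem \ref{TheoremMapsInduceDivisors}(b) applied to $\phi^{(n)}$ (using part (e) to know $\Delta_{\phi^{(n)}} = 0$), or equivalently by iterating the isomorphism of Corollary \ref{CorEveryMapComposesWithAGenerator} with $\Delta = 0$. Since you invoke Theorem \ref{TheoremMapsInduceDivisors} in the very next sentence, the argument goes through unchanged.
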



\section{Application to centers of sharp $F$-purity}
\label{SectionApplicationsToCenters}

In \cite{SchwedeCentersOfFPurity}, we introduced a notion called centers of sharp $F$-purity (also known as $F$-compatible ideals), a positive characteristic analog of a center of log canonicity; see for example \cite{KawamataOnFujitasFreenessConjectureFor3Folds} and \cite{KawamataSubadjunction2}.
Our main goal in this section is to prove several finiteness theorems about centers of sharp $F$-purity.

Recall that an ideal $I$ is called $F$-compatible with respect to $(R, \Delta)$ if for every $e > 0$ and every $\phi \in \Hom_R(F^e_* R(\lceil (p^e - 1) \Delta \rceil), \Delta)$, we have $\phi(F^e_* I) \subseteq I$.  One limitation of the definition of $F$-compatible ideals is that it seems to require checking infinitely many $e > 0$ (and infinitely many $\phi$).  However, for radical ideals $I$, assuming that $(p^e - 1)K_X$ is Cartier, we will show that it is enough to check only that $e$.

\begin{proposition}
\label{PropUniformlyFCompatIfAndOnlyIf}
Suppose that $R$ is a normal $F$-finite ring.  Further suppose that $\Delta$ is an effective $\bQ$-divisor such that $\Hom_R(F^e_* R((p^e - 1)\Delta), R)$ is free as an $F^e_* \O_X$-module.  Then a radical ideal $I \subset R$ is $F$-compatible with respect to $(R, \Delta)$ if and only if $T_e(F^e_* I) \subseteq I$ where $T_e$ is a $F^e_*R$-module generator of $\Hom( F^e_* R( (p^e - 1) \Delta), R)$.
\end{proposition}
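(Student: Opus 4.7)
The ``only if'' direction is essentially automatic: since $(p^e - 1)\Delta$ is integral (by Theorem \ref{TheoremMapsInduceDivisors}(a)), $T_e$ is among the maps considered in Definition \ref{DefnUniformlyFCompat} at level $e$, so $F$-compatibility forces $T_e(F^e_* I) \subseteq I$.

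For the ``if'' direction, the plan is to first reduce to the case $I = Q$ is prime. If $I = \bigcap_j P_j$ with $P_j$ the minimal primes, prime avoidance produces $x_j \in \bigcap_{\ell \ne j} P_\ell \setminus P_j$; for $q \in P_j$ one has $x_j^{p^e} q \in I$, and the $R$-linearity relation $T_e(F^e_*(x_j^{p^e} q)) = x_j\, T_e(F^e_* q)$ combined with the hypothesis gives $x_j T_e(F^e_* q) \in P_j$, whence $T_e(F^e_* q) \in P_j$. Thus it suffices to verify $F$-compatibility for a prime $Q$ satisfying $T_e(F^e_* Q) \subseteq Q$.

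Next, I would iterate the hypothesis to obtain $T_{ne}(F^{ne}_* Q) \subseteq Q$ for every $n \ge 1$, where $T_{ne} := T_e \circ F^e_* T_e \circ \cdots \circ F^{(n-1)e}_* T_e$. By Lemma \ref{LemmaCompositionGivesGens} together with Theorem \ref{TheoremMapsInduceDivisors}(e) and the freeness hypothesis, $T_{ne}$ generates the free rank-one $F^{ne}_* R$-module $\Hom_R(F^{ne}_* R((p^{ne}-1)\Delta), R)$. Writing any $\phi$ at level $ne$ as $(F^{ne}_* s) \cdot T_{ne}$ and using that $Q$ is an ideal, one concludes $\phi(F^{ne}_* Q) \subseteq Q$, thereby handling every level that is a multiple of $e$.

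The hardest step is to propagate this to an arbitrary level $d$. For $\phi \in \Hom_R(F^d_* R(\lceil(p^d-1)\Delta\rceil), R)$, the plan is to pass to $\phi^{\circ e}$ at the multiple-of-$e$ level $de$; by Theorem \ref{TheoremMapsInduceDivisors}(e) this composition has the same associated $\bQ$-divisor $\Delta_\phi \ge \Delta$ as $\phi$, so it lives in $\Hom_R(F^{de}_* R((p^{de}-1)\Delta), R)$, and the previous step then gives $\phi^{\circ e}(F^{de}_* Q) \subseteq Q$. To extract $\phi(F^d_* q) \in Q$ from this, I would use the $R$-linearity relation $\phi(F^d_*(r^{p^d} s)) = r\phi(F^d_* s)$ to unpack the iterates: for $q \in Q$ and $z := \phi(F^d_* q)$, the lift $w_0 := z^{(p^{(e-1)d}-1) p^d} q \in Q$ produces iterates $w_j = z^{p^{(e-j)d}} c_{j-1}$, where $c_j := \phi^{\circ j}(F^{jd}_* 1)$, and in particular $w_e = z \cdot c_{e-1} \in Q$. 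Primality of $Q$ then yields $z \in Q$ whenever $c_{e-1} \notin Q$; in the degenerate case $c_{e-1} \in Q$ one would run the same computation with $\phi^{\circ ne}$ in place of $\phi^{\circ e}$ for growing $n$ (yielding $z \cdot c_{ne-1} \in Q$ for all $n$) and exploit the radicality of $Q$ in $R$ to close out the argument. Handling this final case cleanly is the main obstacle I anticipate.
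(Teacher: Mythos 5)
Your ``only if'' direction is immediate, the prime-avoidance reduction to $I = Q$ prime is correct (and is a nice way to make explicit a step the paper compresses into a citation), and iterating the hypothesis to get $T_{ne}(F^{ne}_* Q) \subseteq Q$ for all $n$ is the right move. The gap is exactly where you flagged it, and unfortunately it is fatal to this particular computation rather than a loose end. The identity $\phi^{\circ e}(F^{de}_* w_0) = z\,c_{e-1}$ only yields $z \in Q$ when $c_{e-1} \notin Q$, and there is no reason for that to hold; in fact $c_{e-1}$ can simply vanish. For instance take $p = 3$, $R = k[x]$, $Q = (x)$, $\Delta = V(x)$, $e = 2$ (so $T_2 = T^{\mathrm{reg}}_2(F^2_*(x^8\cdot\blank))$), $d = 1$, and $\phi = T^{\mathrm{reg}}_1(F_*(x^3\cdot\blank)) \in \Hom_R(F_* R(\lceil 2\Delta\rceil), R)$. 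One checks $T_2(F^2_* Q) \subseteq Q$ and $\phi(F_* Q) \subseteq Q$, yet $c_1 = \phi(F_* 1) = T^{\mathrm{reg}}_1(F_* x^3) = 0$, so $c_m = 0$ for every $m \geq 1$; the increasing-$n$ and radicality idea has literally nothing to grab onto, even though the conclusion you want is true.

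The paper's proof dodges this in two ways that your plan is missing. First it localizes so that $Q = \bm$ is maximal, and argues by \emph{contradiction}: if $\phi(F^d_*\bm) \nsubseteq \bm$ then, after scaling, $\phi(F^d_* x) = 1$ for some $x \in \bm$. Second, and more importantly, it \emph{intersperses multiplication by $x$ at every stage} of the iterated composition, forming
\[
\psi(F^{nd}_* \blank) \;=\; \phi\Bigl(x\, F^d_* \phi\Bigl(x\, F^{2d}_* \phi\bigl(x \cdots F^{(n-1)d}_* \phi(F^{nd}_* \blank)\cdots\bigr)\Bigr)\Bigr),
\]
with $nd = me$ a multiple of $e$. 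The repeated twist by $x$ resets each intermediate value to $1$, so $\psi(F^{nd}_* x) = 1$ on the nose with no spurious factor like $c_{e-1}$. Meanwhile the rounding inequality $p^a\lceil(p^b-1)\Delta\rceil + \lceil(p^a-1)\Delta\rceil \geq \lceil(p^{a+b}-1)\Delta\rceil$ shows $\psi \in \Hom_R(F^{me}_* R((p^{me}-1)\Delta), R)$, which is cyclic with generator $T_{me}$, and $T_{me}$ sends $\bm$ into $\bm$ by your Step 2; hence $\psi(F^{me}_*\bm) \subseteq \bm$, contradicting $\psi(F^{me}_* x) = 1$. In your version the problematic element $q$ is inserted only once (inside $w_0$), and the $p$-th-power bookkeeping dissipates its contribution into the $c_j$'s. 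Inserting it at every composition step, together with the local-ring contradiction setup, is the missing ingredient.
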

\begin{proof}
Since a radical ideal $I$ is $F$-compatible if and only if its minimal associated primes are $F$-compatible, see Lemma \ref{LemmaPropertiesOfUniformlyFCompatible}(5), without loss of generality we may assume that $I$ is prime.  Furthermore, since $F$-compatible ideals behave well with respect to localization, see \cite[Lemma 3.7]{SchwedeCentersOfFPurity}, we may also assume that $R$ is local and that $I = \bm$ is maximal.

Suppose that $\phi : F^b_* R (\lceil (p^b - 1) \Delta\rceil) \rightarrow R$ satisfies the property that $\phi(F^b_* \bm) \nsubseteq \bm$, we will obtain a contradiction.  Therefore, for some element $x \in \bm$, we have that $\phi(F^b_* x) = u$ where $u$ is a unit in $R$.
By scaling $\phi$, we may assume that $u = 1$.  Now choose integers $n$ and $m$ such that $nb = me$.  Consider the function $\psi : F^{nb}_* R \rightarrow R$ defined by the rule
\[
 \psi(F^{nb}_* \blank) = \phi(xF^b_* \phi(x F^{2b}_* \phi(x \cdots F^{(n-1)b}_* \phi(F^{nb}_* \blank) \cdots ))).
\]
Notice that $\psi(F^{nb}_* x) = 1$.  On the other hand, $\Hom_R(F^{me}_* R( (p^{me} - 1)\Delta), R)$ is generated by $T$ composed with itself $(m-1)$-times.  Notice that since $T$ sends $\bm$ into $\bm$, so does its composition.  Therefore, to obtain our contradiction we simply have to check that $\psi \in \Hom_R(F^{nb}_* R, R)$ is an element of $\Hom_R(F^{me}_* R( (p^{me} - 1)\Delta), R)$.  But that is straightforward since it was constructed by composing $\phi$ with itself (using the fact that we round-up, and not round-down, so that $p^a\lceil (p^b - 1)\Delta \rceil + \lceil (p^a - 1) \Delta \rceil \geq \lceil (p^{a+b} - 1) \Delta \rceil$ ).
\end{proof}

\begin{remark}
 For a sharply $F$-pure pair $(R, \Delta)$, all $F$-compatible ideals are radical.
\end{remark}

\begin{corollary}
\label{CorCompatFrobeniusSplitIdealsAreCenters}
Suppose that $\phi : F^e_* R \rightarrow R$ is a Frobenius splitting and $R$ is an $F$-finite normal ring.  Then the centers of sharp $F$-purity for the pair $(R, \Delta_{\phi})$ coincide with the subschemes of $X = \Spec R$ compatibly split with $\phi$.
\end{corollary}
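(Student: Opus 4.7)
The plan is to identify $\phi$ with the ``generating map'' $T_e$ of Proposition \ref{PropUniformlyFCompatIfAndOnlyIf} via Theorem \ref{TheoremMapsInduceDivisors}, after which the corollary becomes essentially immediate.

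First I would observe that since $\phi$ is a Frobenius splitting it is, in particular, surjective; Theorem \ref{TheoremMapsInduceDivisors}(d) then guarantees that the pair $(R, \Delta_\phi)$ is sharply $F$-pure. By parts (b) and (c) of the same theorem, $\Hom_R(F^e_* R((p^e-1)\Delta_\phi), R) \cong F^e_* R$ is free of rank one as an $F^e_* R$-module, and $\phi$, viewed inside $\Hom_R(F^e_* R, R)$ via the natural inclusion, is one of its $F^e_* R$-generators. Hence the freeness hypothesis of Proposition \ref{PropUniformlyFCompatIfAndOnlyIf} is satisfied for $\Delta = \Delta_\phi$, and we may take $T_e = \phi$.

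For the forward direction, let $Q \subset R$ be a center of sharp $F$-purity for $(R, \Delta_\phi)$. By Definition \ref{DefnUniformlyFCompat}, $\psi(F^e_* Q) \subseteq Q$ for every $\psi \in \Hom_R(F^e_* R((p^e-1)\Delta_\phi), R)$, and in particular for $\psi = \phi$, so $V(Q)$ is compatibly split by $\phi$. For the reverse direction, let $Q$ be a prime ideal with $\phi(F^e_* Q) \subseteq Q$. Because $Q$ is radical and $\phi$ serves as the $F^e_* R$-generator $T_e$, Proposition \ref{PropUniformlyFCompatIfAndOnlyIf} yields that $Q$ is $F$-compatible with respect to $(R, \Delta_\phi)$, hence a center of sharp $F$-purity.

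The only substantive step is the identification of $\phi$ with the $F^e_* R$-generator of $\Hom_R(F^e_* R((p^e-1)\Delta_\phi), R)$, and this is already contained in Theorem \ref{TheoremMapsInduceDivisors}(b)(c); everything else is a direct appeal to Proposition \ref{PropUniformlyFCompatIfAndOnlyIf}. If one wishes to extend the statement to non-prime compatibly split subschemes, one notes that any ideal $I$ split by a Frobenius splitting $\phi$ is automatically radical (if $x^n \in I$ and $p^{ke} \geq n$, then iterating $\phi$ produces a Frobenius splitting $\phi \circ F^e_*\phi \circ \cdots \circ F^{(k-1)e}_*\phi$ which preserves $I$ and sends $F^{ke}_* x^{p^{ke}}$ to $x$, forcing $x \in I$), and that minimal primes of such a radical compatibly split ideal remain compatibly split, reducing the general case to the prime case handled above.
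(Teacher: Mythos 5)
Your argument is correct and is exactly the (unstated) proof the paper intends: the corollary is placed immediately after Proposition \ref{PropUniformlyFCompatIfAndOnlyIf} precisely because it follows directly from it, together with the identification of $\phi$ with a generator $T_e$ of $\Hom_R(F^e_*R((p^e-1)\Delta_\phi),R)$ via Theorem \ref{TheoremMapsInduceDivisors}(b),(c). Your closing remarks about compatibly split ideals being automatically radical and about passing to minimal primes are likewise consistent with the paper's surrounding remark that for a sharply $F$-pure pair all $F$-compatible ideals are radical and with Lemma \ref{LemmaPropertiesOfUniformlyFCompatible}(5), so they properly reconcile the mild mismatch between ``subschemes'' and ``centers'' in the statement.
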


\begin{remark}
\label{RemarkQuestionCheckOneNonRadical}
One might ask if an analog of Proposition \ref{PropUniformlyFCompatIfAndOnlyIf} holds for non-radical ideals, and we do not know the answer in general.  However, in \cite{SchwedeCentersOfFPurity}, it was shown that the non-finitistic/big test ideal is the unique smallest $F$-compatible ideal that intersects non-trivially with $R^{\circ}$.  Using the additional structure of the big test ideal, we are able to prove an analogous result (in fact, the proof is very similar to a special case of \cite[Proposition 3.5(3)]{TakagiPLTAdjoint}).
\end{remark}

\begin{definition}
\label{DefinitionComposingMaps}
Suppose that $\phi_e \in \Hom_R(F^e_* R, R)$ is a map.  For every integer $n \geq 0$, we define $\phi_{ne} \in \Hom_R(F^{ne}_* R, R)$ to be the map obtained by composing $\phi_e$ with itself $(n-1)$-times, just as in Theorem \ref{TheoremMapsInduceDivisors}(e).  We set $\phi_{0}$ to be the identity map in $\Hom_R(R, R)$.
\end{definition}

Our next goal is to characterize the big test ideal using this machinery.  First however, we need two lemmas.

\begin{lemma}
\label{LemmaFillInGaps}
Suppose that $\ba$ is an ideal generated by $l$ elements and that $m$ and $k$ are integers.  Then:
\[
(\ba^m)^{[p^k]} \supseteq \ba^{p^k m + l(p^k - 1)}.
\]
\end{lemma}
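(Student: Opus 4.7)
The plan is to work monomially with a fixed set of generators $a_1, \ldots, a_l$ of $\ba$. Then $\ba^{p^k m + l(p^k-1)}$ is generated as an ideal by the monomials $a_1^{j_1} \cdots a_l^{j_l}$ with $j_1 + \cdots + j_l = p^k m + l(p^k-1)$, while $(\ba^m)^{[p^k]}$ is generated by the monomials $\bigl(a_1^{i_1} \cdots a_l^{i_l}\bigr)^{p^k}$ with $i_1 + \cdots + i_l = m$. So it is enough to show that each monomial generator of the first ideal is an $R$-multiple of some monomial generator of the second.

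To do that, I would apply the division algorithm to each exponent, writing $j_s = p^k q_s + r_s$ with $0 \le r_s \le p^k - 1$. Summing over $s$ yields
\[
p^k \sum_s q_s + \sum_s r_s \;=\; p^k m + l(p^k - 1),
\]
and since $\sum_s r_s \le l(p^k-1)$, one concludes $\sum_s q_s \ge m$. Then I would factor
\[
a_1^{j_1} \cdots a_l^{j_l} \;=\; \bigl(a_1^{q_1} \cdots a_l^{q_l}\bigr)^{p^k} \cdot a_1^{r_1} \cdots a_l^{r_l},
\]
and observe that the first factor is the $p^k$-th power of an element of $\ba^m$ (because $\sum_s q_s \ge m$), so it lies in $(\ba^m)^{[p^k]}$, and hence so does the whole monomial.

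There is no genuine obstacle; the lemma is a pigeonhole/division-algorithm bookkeeping computation. The constant $l(p^k - 1)$ is precisely the sharp amount one must add to $p^k m$ so that even in the worst case, where every remainder $r_s$ attains its maximum value $p^k - 1$, the quotients still satisfy $\sum_s q_s \ge m$ and one can extract a $p^k$-th power of a degree-$m$ monomial. The statement makes sense and is only nontrivial when $m \ge 0$; for $m \le 0$ both sides are typically interpreted as the unit ideal and the inclusion is automatic.
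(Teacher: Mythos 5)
Your proof is correct and is essentially identical to the paper's: same monomial generators, same division-algorithm decomposition $j_s = p^k q_s + r_s$, same pigeonhole inequality $\sum_s q_s \ge m$, and the same factoring to exhibit the monomial as an $R$-multiple of a $p^k$-th power of an element of $\ba^m$. The closing remark about $m \le 0$ is not in the paper but is harmless.
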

\begin{proof}
Let $f_1, \ldots, f_l$ be a set of generators for $\ba$.  Then $\ba^{p^k m + l(p^k - 1)}$ is generated by the elements of the form
\[
f_1^{b_1} \dots  f_l^{b_l}
\]
where $\sum_{i = 1}^l b_i = p^k m + l(p^k - 1)$.  We will show that each such element is contained in $(\ba^m)^{[p^k]}$. Write each $b_i = q_i p^k + r_i$ where $0 \leq r_i < p^k$.  Thus we have
\[
f_1^{b_1} \cdots  f_l^{b_l} = (f_1^{q_1} \cdots f_l^{q_l})^{p^k} (f_1^{r_1} \cdots f_l^{r_l}).
\]
Note $\sum_{i = 1}^{l} r_i \leq l(p^k - 1)$.  Therefore,
\[
p^k m + l(p^k - 1) = \sum_{i=1}^l b_i  = \left(p^k \sum_{i = 1}^l q_i\right)  + \left(\sum_{i = 1}^l r_i\right) \leq \left(p^k \sum_{i = 1}^l q_i\right) + l(p^k - 1)
\]
which implies that $p^k m \leq p^k \sum_{i = 1}^l q_i$, in particular, $m \leq \sum_{i = 1}^l q_i$.  Therefore,
\[
(f_1^{q_1} \cdots f_l^{q_l})^{p^k} \in (\ba^m)^{[p^k]}
\]
and so $f_1^{b_1} \cdots  f_l^{b_l} \in (\ba^m)^{[p^k]}$ as desired.
\end{proof}

\begin{lemma}
 \label{LemUniformGrowthOfFrobeniusPowers}
Suppose that $\ba$ is an ideal of $R$ which can be generated by $l$ elements and such that $\ba \cap R^{\circ} \neq \emptyset$.  Fix an $e > 0$.  Then there exists an element $c' \in R^{\circ}$ such that
\[
 c' \ba^{\lceil t(p^{ne + k} - 1) \rceil} \subseteq (\ba^{\lceil t(p^{ne} - 1) \rceil})^{[p^k]}
\]
for all $n > 0$ and all $k < e$.
\end{lemma}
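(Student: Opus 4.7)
The plan is to reduce the uniform containment to a direct comparison of exponents and then exploit the fact that $k$ is bounded by $e$. By Lemma \ref{LemmaFillInGaps} applied with $m = \lceil t(p^{ne}-1)\rceil$, we have
\[
(\ba^{\lceil t(p^{ne}-1) \rceil})^{[p^k]} \supseteq \ba^{p^k \lceil t(p^{ne}-1) \rceil + l(p^k - 1)}.
\]
Hence it suffices to find a single non-negative integer $M$, depending only on $t$, $l$, and $e$, such that
\[
M + \lceil t(p^{ne+k}-1) \rceil \;\geq\; p^k \lceil t(p^{ne}-1) \rceil + l(p^k - 1) \qquad \text{for all } n>0, \; k<e,
\]
and then to take $c' = a^M$ for any choice of $a \in \ba \cap R^\circ$ (so that $c' \in \ba^M \cap R^\circ$, since $R^\circ$ is multiplicatively closed).

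The estimate is routine. Using $\lceil t(p^{ne}-1) \rceil \leq t(p^{ne}-1) + 1$ and $\lceil t(p^{ne+k}-1)\rceil \geq t(p^{ne+k}-1)$, the leading $t p^{ne+k}$ terms cancel and one is left with
\[
p^k \lceil t(p^{ne}-1) \rceil + l(p^k - 1) - \lceil t(p^{ne+k}-1) \rceil \;\leq\; p^k(l - t + 1) + (t - l).
\]
Since $k < e$, we have $p^k \leq p^{e-1}$, so the right-hand side is bounded by a constant depending only on $t$, $l$, and $e$. Taking $M$ to be the ceiling of this constant (and $\max$'ed with $0$) produces the required uniform bound, and $c' = a^M$ then satisfies the conclusion of the lemma for every $n$ and every $k<e$ simultaneously.

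The only real subtlety — rather than an obstacle — is recognizing that the hypothesis $k < e$ is essential: the bounded range of $p^k$ is exactly what makes the exponent difference uniformly controllable in $n$. Without it, no single power of $\ba$ could absorb the growth of $p^k$, and no uniform $c'$ would exist. Given this observation, the argument is a one-line application of Lemma \ref{LemmaFillInGaps} followed by the elementary estimate above.
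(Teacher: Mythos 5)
Your proof is correct and follows essentially the same route as the paper: both apply Lemma \ref{LemmaFillInGaps} to reduce to a comparison of exponents that, after cancelling the $n$-dependent leading terms, becomes a bound that depends only on $k < e$, $l$, and $t$. The paper simply chooses the cruder but cleaner exponent $(l+1)p^e$ (replacing $l(p^k-1)$ by $lp^e$ and absorbing the ceiling/floor slack into an extra $p^e$), whereas you compute a sharper constant $p^k(l-t+1)+(t-l)$; either choice yields a uniform $c'$. Your closing observation that the hypothesis $k<e$ is what bounds $p^k$ and hence makes a uniform $c'$ possible is exactly the right thing to emphasize.
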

\begin{proof}
First note that we have 
\[
(\ba^{\lceil t(p^{ne} - 1) \rceil})^{[p^k]} \supseteq \ba^{p^k \lceil t(p^{ne} - 1) \rceil + l (p^k - 1)} \supseteq \ba^{p^k \lceil t(p^{ne} - 1) \rceil +  l p^e}.
\]
The first containment holds by Lemma \ref{LemmaFillInGaps} above.  Thus it is sufficient to find a $c'$ such that $c' \ba^{\lceil t(p^{ne + k} - 1) \rceil} \subseteq \ba^{p^k \lceil t(p^{ne} - 1) \rceil + l p^e}$.  Choose $c' \in \ba^{(l+1)p^e} \cap R^{\circ}$.  We need to show that
\[
(l+1)p^e + \lceil t(p^{ne + k} - 1) \rceil \geq p^k \lceil t(p^{ne} - 1) \rceil + l p^e.
\]
However,
\[
p^k \lceil t(p^{ne} - 1) \rceil + l p^e \leq p^k \lfloor t(p^{ne} - 1) \rfloor + p^e + l p^e \leq \lfloor p^k t(p^{ne} - 1) \rfloor + (l+1) p^e \leq \lceil t(p^{ne + k} - 1) \rceil + (l+1)p^e
\]
as desired.

\end{proof}

\begin{proposition}
\label{PropositionBigTestIdealIsSmallest}
Suppose that $R$ is a normal $F$-finite ring.  Further suppose that $\Delta$ is an effective $\bQ$-divisor such that $(p^e - 1)\Delta$ is integral and that $\Hom_R(F^e_* R((p^e - 1)\Delta), R)$ is rank one and free as an $F^e_* R$-module with generator $T_e$ (viewed as an element of $\Hom_R(F^e_* R, R)$).  Set $T_{ne}$ to be the map obtained by composing $T_{e}$ with itself $(n-1)$-times.  Then we have the following
\begin{itemize}
\item[(i)]  The big test ideal $\tau_b(R; \Delta)$ is the unique smallest ideal $J$ whose intersection with $R^{\circ}$ is non-trivial and which satisfies $T_e(F^e_* J) \subseteq J$.
\item[(ii)]  Furthermore, if $\ba$ is an ideal such that $\ba \cap R^{\circ} \neq \emptyset$ and $t > 0$ is a real number, then the big test ideal $\tau_b(R; \Delta, \ba^t)$ is the unique smallest ideal $J$ whose intersection with $R^{\circ}$ is non-trivial and which satisfies $T_{ne}(F^{ne}_* \ba^{\lceil t(p^e - 1) \rceil} J) \subseteq J$ for all integers $n > 0$.
\end{itemize}
\end{proposition}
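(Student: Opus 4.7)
Plan:

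Both parts follow the same template: (A) the big test ideal satisfies the claimed closure condition, and (B) any ideal satisfying the condition contains the big test ideal. Part (A) is immediate from Lemma \ref{LemmaPropertiesOfUniformlyFCompatible}(4), together with the observation that $T_{ne}$, being the $n$-fold composition of $T_e$, lies in $\Hom_R(F^{ne}_* R((p^{ne}-1)\Delta), R)$ and is therefore one of the maps witnessing $F$-compatibility of $\tau_b$. I focus on part (B) for (i); part (ii) is an analogous adaptation.

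For (B) in part (i): given $J$ with $J \cap R^\circ \neq \emptyset$ and $T_e(F^e_* J) \subseteq J$, choose any big sharp test element $c_0$ and any $c_1 \in J \cap R^\circ$. Then $c := c_0 c_1 \in J$ remains a big sharp test element (products of sharp test elements with elements of $R^\circ$ are sharp test elements, directly from the definition). The heart of the proof is to establish the identity
\[
\tau_b(R;\Delta) \;=\; \sum_{n \geq 0} T_{ne}(F^{ne}_* cR).
\]
The containment $\supseteq$ is immediate from $F$-compatibility of $\tau_b$ and $c \in \tau_b$. For $\subseteq$, I would invoke the image characterization \cite[Lemma 2.1]{HaraTakagiOnAGeneralizationOfTestIdeals}, which expresses $\tau_b$ as $\sum_{e', \phi_{e'}} \phi_{e'}(F^{e'}_* cR)$ over all levels $e'$ and all $\phi_{e'} \in \Hom_R(F^{e'}_* R(\lceil(p^{e'}-1)\Delta\rceil), R)$. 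For $e' = ne$ a multiple of $e$, Corollary \ref{CorEveryMapComposesWithAGenerator} gives $\phi_{e'} = T_{ne}(s \cdot \blank)$ locally, so its image lies in $T_{ne}(F^{ne}_* cR)$. For $e'$ not a multiple of $e$, one iterates $\phi_{e'}$ with itself (as in the proof of Proposition \ref{PropUniformlyFCompatIfAndOnlyIf}) until reaching a composition whose level is a multiple of $e$, and then applies the factorization above; the big sharp test element property of $c$ is used to absorb the ``correction'' factors that arise in this iteration back into the sum on the right. Once the identity is proved, the conclusion is immediate: $c \in J$ together with iterated closure under $T_e$ gives $T_{ne}(F^{ne}_* cR) \subseteq T_{ne}(F^{ne}_* J) \subseteq J$ for every $n$, so $\tau_b \subseteq J$.

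Part (ii) follows the same outline, carrying an $\ba^{\lceil t(p^e-1)\rceil}$ factor throughout the argument. The analogous identity takes the form $\tau_b(R;\Delta,\ba^t) = \sum_n T_{ne}(F^{ne}_* \ba^{\lceil t(p^e-1)\rceil} cR)$, and is established using the image characterization together with Lemmas \ref{LemmaFillInGaps} and \ref{LemUniformGrowthOfFrobeniusPowers} to reconcile the $\ba$-exponent $\lceil t(p^e-1)\rceil$ appearing in the hypothesis with the natural exponents $\lceil t(p^{ne}-1)\rceil$ that arise at level $ne$ (the big sharp test element $c$ can absorb the discrepancy). The main technical obstacle in both parts is the handling of levels not a multiple of $e$, which demands a careful combination of the composition trick from Proposition \ref{PropUniformlyFCompatIfAndOnlyIf} with the defining property of a big sharp test element, in the spirit of \cite[Proposition~3.5(3)]{TakagiPLTAdjoint}.
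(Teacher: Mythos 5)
Your overall strategy (factor the big test ideal through the image characterization, then reduce to closure under $T_{ne}$) is workable in principle and is genuinely different from the paper's proof, which dualizes the closure condition on $J$ and shows directly that $E_{R/J}\subseteq 0^{*\Delta}_{E_R}$ by a Hochster--Huneke \cite[Lemma 8.16]{HochsterHunekeTC1}-style argument. However, your treatment of the levels $e'$ that are not multiples of $e$ has a genuine gap, and that is exactly the only nontrivial point in either approach.

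You propose, for $e'\nmid e$, to ``iterate $\phi_{e'}$ with itself until the level is a multiple of $e$.'' But iteration goes the wrong way for containment: the composition $\phi_{e'}^{\circ m}$ satisfies $\phi_{e'}^{\circ m}(F^{me'}_*cR)\subseteq\phi_{e'}(F^{e'}_*cR)$, so showing the iterated image lands in $J$ tells you nothing about $\phi_{e'}(F^{e'}_*cR)$ itself. The vague appeal to the big sharp test element ``absorbing correction factors'' does not close this, and in fact the claimed identity $\tau_b=\sum_n T_{ne}(F^{ne}_*cR)$ is doubtful for an arbitrary big sharp test element $c=c_0c_1$: there is no visible mechanism to control the terms at levels $0<e'<e$.

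The missing idea, present in the paper's proof, is twofold. First, one should take $c=d^{p^{e-1}}$ for some $d\in J\cap R^\circ$; this specific power is what lets the argument at level $e'=ne+k$ with $0\le k<e$ go through, because $d^{p^{e-1}}=(d^{p^{e-1-k}})^{p^k}$ and $p^{e-1-k}\ge 1$. Second, rather than iterating $\phi_{e'}$, apply Corollary \ref{CorEveryMapComposesWithAGenerator} (with $e$ replaced by $ne$) to factor $\phi_{ne+k}$ \emph{through} $T_{ne}$ as $T_{ne}\circ F^{ne}_*\psi_k$ for some $\psi_k$ at level $k$; then
\[
\phi_{ne+k}\bigl(F^{ne+k}_*d^{p^{e-1}}R\bigr)\subseteq T_{ne}\bigl(F^{ne}_*d^{p^{e-1-k}}R\bigr)\subseteq T_{ne}(F^{ne}_*J)\subseteq J,
\]
since $\psi_k(F^k_*(d^{p^{e-1-k}})^{p^k}r)=d^{p^{e-1-k}}\psi_k(F^k_*r)$, and $d^{p^{e-1-k}}\in dR\subseteq J$. (The case $n=0$ is handled the same way without invoking $T_{ne}$.) This is precisely the primal avatar of the paper's Matlis-dual computation $c z^{p^j}=d^{p^{e-1}-p^k}\bigl(d^{p^k}z^{p^{ne+k}}\bigr)=0$. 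If you incorporate the specific choice $c=d^{p^{e-1}}$ and replace the iteration step by this factorization through $T_{ne}$, your argument becomes correct; as written, it does not establish the required containment.
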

\begin{proof}  
For (i), note that the big test ideal $\tau_b(R; \Delta)$ satisfies the condition $T_e(F^e_* \tau_b(R; \Delta)) \subseteq \tau_b(R; \Delta)$ due to \cite[Proposition 6.1]{SchwedeCentersOfFPurity}.  Thus we simply have to show it is the smallest such ideal.  Likewise for (ii), $\tau_b(R; \Delta, \ba^t)$ satisfies the condition $T_{ne}(F^{ne}_* \ba^{\lceil t(p^e - 1) \rceil} \tau_b(R; \Delta, \ba^t)) \subseteq \tau_b(R; \Delta, \ba^t)$ for all integers $n > 0$, so we must show that it is the smallest such ideal.

We now claim that statement is local in order to assume that $R = (R, \bm)$ is a local ring.  We outline the proof of this claim in case (i) since case (ii) is essentially the same.
Suppose that $J$ is an ideal which satisfies both $J \cap R^{\circ} \neq \emptyset$ and $T_e(F^e_* J) \subseteq J$.  Then $J + \tau_b(R; \Delta)$ also satisfies both conditions.  Note that $J$ does not contain $\tau_b(R; \Delta)$ if and only if we have the strict containment $J + \tau_b(R; \Delta) \supsetneq J$.  But in such a case, we can localize at a maximal ideal where the same strict containment holds.  Thus we have reduced to the local case.  Therefore, from this point forward, we assume that $R$ is a local ring with maximal ideal $\bm$.

Suppose that $J$ is an ideal such that $T_e(F^e_* J) \subseteq J$ (respectively $T_{ne}(F^{ne}_* \ba^{\lceil t(p^{ne} - 1) \rceil} J) \subseteq J$ for all $n > 0$) and such that $J \cap R^{\circ} \neq \emptyset$.    In case (i), notice also that $T_{ne}(F^{ne}_* J) \subseteq J$ for all positive integers $n$ (and thus $\phi(F^{ne}_* J) \subseteq J$ for all $\phi \in \Hom_R(F^{ne}_* R( (p^{ne - 1}-1)\Delta), R)$ since $T_{ne}$ is also a generator by Corollary \ref{CorEveryMapComposesWithAGenerator}).

In the setting of (i), fix $d \in J \cap R^{\circ}$. 
By applying Matlis duality, we see that the composition
\[
\xymatrix@C=10pt{
 E_{R/J} \ar[r] & E_R \ar[r] & E_R \tensor_R F^{ne}_* R \ar[r] & E_R \tensor_R F^{ne}_* R((p^{ne} - 1)\Delta) \ar[rr]^{F^{ne}_* (\times d)} & & E_R \tensor_R F^{ne}_* R((p^{ne} - 1)\Delta)
}
\]
is zero for every integer $n > 0$.  Likewise, in the setting of (ii), for each $d \in J \cap R^{\circ}$ and each $a \in \ba^{\lceil t(p^{ne} - 1) \rceil}$, we have that the composition
\[
\xymatrix@C=10pt{
 E_{R/J} \ar[r] & E_R \ar[r] & E_R \tensor_R F^{ne}_* R \ar[r] & E_R \tensor_R F^{ne}_* R((p^{ne} - 1)\Delta) \ar[rr]^{F^{ne}_* (\times d a)} & & E_R \tensor_R F^{ne}_* R((p^{ne} - 1)\Delta)
}
\]
is zero for every integer $n > 0$.

We now want to show that $E_{R/J} \subset 0^{*\Delta}_{E_R}$ (respectively $E_{R/J} \subset 0^{* \Delta, \ba^t}_{E_R}$) because $\Ann_R(0^{*\Delta}_{E_R}) = \tau_b(R; \Delta)$ (respectively $\Ann_R(0^{* \Delta, \ba^t}_{E_R}) = \tau_b(R; \Delta, \ba^t)$).
Therefore, choose $z \in E_{R/J}$.  By assumption $d z^{p^{ne}} = 0 \in E_R \tensor_R F^{ne}_* R((p^{ne} - 1)\Delta)$ for all $n > 0$ (respectively, $d \ba^{\lceil t(p^{ne} - 1) \rceil} z^{p^{ne}} = 0 \in E_R \tensor_R F^{ne}_* R((p^{ne} - 1)\Delta)$  for all $n > 0$).  We need to verify a similar statement for powers of $p$ that are not multiples of $e$, and so now the proof becomes quite similar to \cite[Lemma 8.16]{HochsterHunekeTC1}.

In the setting of (i), we claim that $F^{ne}_* R((p^{ne} - 1)\Delta )$ naturally maps to $F^{k+ne}_* R(\lceil (p^{k+ne} - 1)\Delta \rceil)$ for any $k > 0$ via the $k$-iterated action of Frobenius.  To see this explicitly, apply $\Hom_R(R(-\lceil (p^{ne} - 1) \Delta\rceil), \blank)$ to the map $R \rightarrow F^k_* R(\lceil (p^{k} - 1)\Delta \rceil)$.  Tensoring with $E_R$ then gives us a map
\[
\xymatrix@R=3pt{
F^{ne}_* R((p^{ne} - 1)\Delta ) \tensor_R E_R \ar[r] & F^{k+ne}_* R(\lceil (p^{k+ne} - 1)\Delta \rceil) \tensor_R E_R \\
d z^{p^ne} = d \tensor z \ar@{|->}[r] & d^{p^k} \tensor z = d^{p^k} z^{p^{k + ne}}
}
\]
which factors the map $E_R \rightarrow F^{k+ne}_* R(\lceil (p^{k+ne} - 1)\Delta \rceil) \tensor_R E_R$.
Therefore, $d^{p^k} z^{p^{ne + k}} = 0$ for all $k, n > 0$.

Choose $c = d^{p^{e - 1}}$ and choose $j > 0$ arbitrary.  Write $j = ne + k$ where $k < e$.  Then
\[
c z^{p^j} = d^{p^{e - 1}} z^{p^{ne + k}} = d^{p^{e - 1} - p^k} d^{p^k} z^{p^{ne + k}} = d^{p^{e - 1} - p^k} 0 = 0
\]
as desired.  Therefore, $E_{R/J} \subset 0^{*\Delta}_{E_R}$ so that $J = \Ann_R(E_{R/J}) \supseteq \Ann_R( 0^{*\Delta}_{E_R}) = \tau_b(R; \Delta)$ which proves (i).

In case (ii), using a similar argument, we still have that $d^{p^k} (\ba^{\lceil t(p^{ne} - 1) \rceil})^{[p^k]} z^{p^{ne + k}} = 0$ for all $k, n > 0$.  By Lemma \ref{LemUniformGrowthOfFrobeniusPowers}, there exists a $c' \in R^{\circ}$ such that $c' \ba^{\lceil t(p^{ne + k} - 1) \rceil} \subseteq (\ba^{\lceil t(p^{ne} - 1) \rceil})^{[p^k]}$
for all $n > 0$ and all $k < e$.

Set $c = c' d^{p^{e - 1}}$, choose $j > 0$ arbitrary and write $j = ne + k$ where $k < e$.  Then
\[
c \ba^{\lceil t(p^j - 1)\rceil} z^{p^j} = d^{p^{e - 1}} c' \ba^{\lceil t(p^{ne + k} - 1)\rceil} z^{p^{ne + k}} \subseteq d^{p^{e - 1} - p^k} d^{p^k} (\ba^{\lceil t(p^{ne} - 1) \rceil})^{[p^k]} z^{p^{ne + k}} = d^{p^{e - 1} - p^k} 0 = 0
\]
as desired.
\end{proof}

\section{$F$-adjunction}

In this section, we re-interpret the following observation using the language from the previous sections.

\begin{observation}
\label{ObsMapsRestrictToCenters}
Suppose that $(R, \bm)$ is an $F$-finite local ring and $\phi \in \Hom_R(F^e_* R, R)$.  Further suppose that $I$ is a proper ideal of $R$ such that $\phi(F^e_* I) \subseteq I$.  Then there is the following diagram:
\[
\xymatrix{
F^e_* R \ar[d]_{F^e_* \alpha} \ar[r]^-{\phi} & R \ar[d]^{\alpha} \\
F^e_* (R/I) \ar[r]^-{\phi_I} & R/I \\
}
\]
where the vertical arrows are the natural quotients.

\begin{itemize}
\item{} Because $R$ is local, $\phi$ is surjective if and only if $\phi_I$ is surjective.
\end{itemize}
\end{observation}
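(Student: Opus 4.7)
The plan is to construct $\phi_I$ by the universal property of quotients, and then use the local hypothesis to handle the reverse direction of the surjectivity equivalence.

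First, I would produce $\phi_I$. The hypothesis $\phi(F^e_* I) \subseteq I$ says precisely that the composition $\alpha \circ \phi : F^e_* R \to R/I$ vanishes on $F^e_* I$. Since $F^e_* \alpha : F^e_* R \to F^e_*(R/I)$ is the quotient by $F^e_* I$, the universal property of quotients produces a unique $R$-linear map $\phi_I : F^e_*(R/I) \to R/I$ with $\phi_I \circ F^e_* \alpha = \alpha \circ \phi$. This is exactly the commutativity of the square and establishes existence.

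For the equivalence of surjectivities, the easy direction is that if $\phi$ is surjective, then $\alpha \circ \phi$ is surjective as a composition of surjections, and since $\alpha \circ \phi = \phi_I \circ F^e_* \alpha$ factors through $\phi_I$, the map $\phi_I$ must be surjective too. This direction does not use that $R$ is local.

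The reverse direction is the only step where locality enters, and it is a short Nakayama-style argument. Suppose $\phi_I$ is surjective. Then for every $r \in R$ there exists $s \in R$ with $\phi(F^e_* s) \equiv r \pmod{I}$; applying this to $r = 1$ gives $\phi(F^e_* s_0) = 1 - b$ for some $b \in I$. Since $I$ is a proper ideal of the local ring $R$, we have $I \subseteq \bm$, so $b \in \bm$ and hence $1 - b$ is a unit of $R$. Thus the image of $\phi$ (which is an $R$-submodule of $R$, i.e.\ an ideal) contains a unit, so it must equal $R$, proving $\phi$ is surjective. This last implication is the one spot where the local hypothesis is essential, and it is the only mild obstacle; everything else is formal.
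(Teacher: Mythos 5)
Your proof is correct, and since the paper leaves this as an unproved ``Observation,'' your argument supplies exactly the standard reasoning one would expect: the universal property of quotients gives $\phi_I$, the forward surjectivity direction is formal, and the reverse direction is the short Nakayama-type argument that the image of $\phi$ is an ideal containing the unit $1-b$ with $b\in I\subseteq\bm$. No gaps, and this matches the approach the paper implicitly relies on.
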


When we apply the correspondence between effective $\bQ$-divisors and $\phi \in \Hom_R(F^e_* R, R)$, we obtain the following result.

\begin{theorem}
\label{ThmFirstFAdjunction}
Suppose that $R$ is a reduced $F$-finite normal ring and that $(R, \Delta)$.  Assume also that $(p^e - 1)\Delta$ is an integral divisor such that we have an isomorphism $\Hom_R(F^e_* R((p^e - 1)\Delta), R) \cong F^e_*R$ of $F^e_*R$-modules.  Further suppose that $I \subset R$ is $F$-compatible with respect to $(R, \Delta)$ and that $R/I$ is normal. Finally suppose that $(R, \Delta)$ is sharply $F$-pure at the generic points of $\Spec R/I$ (that is, after localizing at the minimal primes of $I$).  Then there exists a canonically determined effective $\bQ$-divisor $\Delta_{R/I}$ on $\Spec R/I$ satisfying the following properties:
\begin{itemize}
\item[(i)]  $(p^e - 1) (K_{R/I} + \Delta_{R/I})$ is an integral Cartier divisor
\item[(ii)]  $\Hom_{R/I}\left(F^e_* \left((R/I)((p^e - 1)\Delta_{R/I})\right), R/I\right) \cong F^e_*(R/I)$ as $F^e_*(R/I)$-modules.
\item[(iii)]  $(R, \Delta)$ is sharply $F$-pure near $\Spec R/I$ if and only if $(R/I, \Delta_{R/I})$ is sharply $F$-pure.
\item[(iv)]  For any ideal $\ba \subseteq R$ which is not contained in any minimal prime of $I$ and any real number $t > 0$, we have that
    $(R, \Delta, \ba^t)$ is sharply $F$-pure near $\Spec R/I$ if and only if $(R/I, \Delta_{R/I}, {\overline \ba}^t)$ is sharply $F$-pure.
\item[(v)]  $I$ is maximal with respect to containment among $F$-compatible ideals for the pair $(R, \Delta)$ (in other words, $I$ is a minimal center of sharp $F$-purity), if and only if $(R/I, \Delta_{R/I})$ is a strongly $F$-regular pair and $R/I$ is a domain.\footnote{In fact, if we we assume that $I$ is maximal among $F$-compatible ideals, then it follows that $R/I$ is a normal domain and so the assumption that $R/I$ is normal is unnecessary.}
\item[(vi)]  There is a natural bijection between the centers of sharp $F$-purity of $(R/I, \Delta_{R/I})$, and the centers of sharp $F$-purity of $(R, \Delta)$ which contain $I$.
\end{itemize}
\end{theorem}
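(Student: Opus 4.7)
The plan is to exploit the correspondence in Theorem \ref{TheoremMapsInduceDivisors} and \ref{TheoremDivisorsInduceMaps} together with Observation \ref{ObsMapsRestrictToCenters}. First, use Theorem \ref{TheoremDivisorsInduceMaps} to produce a map $\phi \in \Hom_R(F^e_* R, R)$ which is an $F^e_* R$-generator of $\Hom_R(F^e_* R((p^e-1)\Delta), R)$ and whose associated divisor (in the sense of Theorem \ref{TheoremMapsInduceDivisors}) is $\Delta$. Because $I$ is $F$-compatible with respect to $(R, \Delta)$, we have $\phi(F^e_* I) \subseteq I$, so Observation \ref{ObsMapsRestrictToCenters} gives a descent $\phi_I : F^e_*(R/I) \to R/I$. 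The hypothesis that $(R,\Delta)$ is sharply $F$-pure at the generic points of $\Spec R/I$ means $\phi$ becomes surjective after localizing at any minimal prime of $I$, so $\phi_I$ is non-zero. Now apply Theorem \ref{TheoremMapsInduceDivisors} to $\phi_I$ to obtain the effective $\bQ$-divisor $\Delta_{R/I}$ on $\Spec R/I$. Canonicity comes from the fact that $\phi$ is only determined up to pre-multiplication by a unit, but the induced $\phi_I$ is determined up to the same ambiguity, and Theorem \ref{TheoremMapsInduceDivisors}(f) shows that this ambiguity does not change the divisor.

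Properties (i) and (ii) are then exactly Theorem \ref{TheoremMapsInduceDivisors}(a,b) applied to $\phi_I$. For (iii), both sharp $F$-purity conditions translate under the bijection into surjectivity of $\phi$ (resp. $\phi_I$) in some neighborhood, and surjectivity of $\phi$ at a maximal ideal $\bm \supseteq I$ is equivalent to surjectivity of $\phi_I$ at $\bm/I$ since the vertical maps in the square of Observation \ref{ObsMapsRestrictToCenters} are surjective and the bottom map factors the top one modulo $I$. For (iv), the same square, combined with the fact that $I$ being $F$-compatible forces the iterates $\phi_{ne}$ to descend to $(\phi_I)_{ne}$ (Definition \ref{DefinitionComposingMaps}), reduces the question to: for some $a \in \ba^{\lceil t(p^{ne}-1)\rceil}$ we have $\phi_{ne}(F^{ne}_* a R) = R$ locally near $I$, if and only if $(\phi_I)_{ne}(F^{ne}_* \bar a \cdot R/I) = R/I$. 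This follows just as in (iii), noting $\overline{\ba}^{\lceil t(p^{ne}-1)\rceil} = \overline{\ba^{\lceil t(p^{ne}-1)\rceil}}$.

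For (vi), the desired bijection is the set-theoretic one $Q \mapsto Q/I$. The content is that an ideal $J \supseteq I$ is $F$-compatible for $(R,\Delta)$ if and only if $J/I$ is $F$-compatible for $(R/I, \Delta_{R/I})$. Here is where Proposition \ref{PropUniformlyFCompatIfAndOnlyIf} is crucial: since $(p^e-1)(K_X+\Delta)$ is Cartier, it suffices to check a single generator $\phi$ of $\Hom_R(F^e_*R((p^e-1)\Delta),R)$, and correspondingly a single generator of $\Hom_{R/I}(F^e_*(R/I)((p^e-1)\Delta_{R/I}),R/I)$. But by construction $\phi_I$ is such a generator, and the compatibility $\phi_I(F^e_*(J/I)) \subseteq J/I$ is exactly the image of $\phi(F^e_* J) \subseteq J$ under the quotient $R \twoheadrightarrow R/I$; conversely the lift holds because the quotient surjection is surjective with kernel $I$ which is already $\phi$-stable. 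Property (v) then follows from Lemma \ref{LemmaPropertiesOfUniformlyFCompatible}(6): $(R/I,\Delta_{R/I})$ is strongly $F$-regular if and only if it has no centers of sharp $F$-purity besides $(0)$, which via (vi) is equivalent to $I$ being maximal among $F$-compatible ideals of $(R,\Delta)$; the fact that $R/I$ is then automatically a domain follows because the minimal primes of $I$ are themselves $F$-compatible (Lemma \ref{LemmaPropertiesOfUniformlyFCompatible}(5)) and strictly contain $I$ unless $I$ is already prime.

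The main obstacle is property (vi), and specifically the single-generator reduction: without Proposition \ref{PropUniformlyFCompatIfAndOnlyIf} the statement would require us to check infinitely many maps on both sides of the correspondence and to verify that every $\psi \in \Hom_R(F^d_* R(\lceil(p^d-1)\Delta\rceil),R)$ descends compatibly for all $d$. Proposition \ref{PropUniformlyFCompatIfAndOnlyIf} (together with Corollary \ref{CorEveryMapComposesWithAGenerator}, which tells us that every such $\psi$ factors through an iterate of our distinguished generator $\phi$) is what makes the argument work, and allows us to reduce every assertion in the theorem to a statement about the single pair $(\phi,\phi_I)$.
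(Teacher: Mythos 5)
Your proposal is correct and follows essentially the same path as the paper: construct $\phi$ from $\Delta$ via Theorem~\ref{TheoremDivisorsInduceMaps}, descend to $\phi_I$ using the $F$-compatibility of $I$ and Observation~\ref{ObsMapsRestrictToCenters}, verify $\phi_I\neq 0$ from sharp $F$-purity at the generic points, convert back to $\Delta_{R/I}$ via Theorem~\ref{TheoremMapsInduceDivisors}, and then read off (i)--(vi) from the correspondence, Observation~\ref{ObsMapsRestrictToCenters}, Proposition~\ref{PropUniformlyFCompatIfAndOnlyIf}, and Lemma~\ref{LemmaPropertiesOfUniformlyFCompatible}. One small gap worth closing: your canonicity argument only treats the unit ambiguity in the choice of $\phi$ for the \emph{fixed} $e$ given in the hypotheses; the paper additionally notes, via Theorem~\ref{TheoremMapsInduceDivisors}(e,f), that replacing $e$ by any multiple (or any other admissible $e'$) also yields the same $\Delta_{R/I}$, which is part of what ``canonically determined'' should mean.
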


\begin{remark}
Roughly speaking, properties (iii), (iv), (v) and (vi) imply is that the singularities of $(R/I, \Delta_{R/I})$ are very closely related to the singularities of $(R, \Delta)$ near $I$.  Compare with \cite{KawakitaInversion}, \cite{KawamataSubadjunction2}, \cite{KawakitaComparisonNonLCI},  \cite{EinMustataJetSchemesAndSings}, \cite{AmbroAdjunctionConjecture}, and \cite{EinMustataYasuda}.
\end{remark}

\begin{proof}
Given $\Delta$ as above, associate a $\phi \in \Hom_R(F^e_* R, R)$ as in Theorem \ref{TheoremDivisorsInduceMaps}.  Just as in Observation \ref{ObsMapsRestrictToCenters}, we associate a $\phi_I \in \Hom_{R/I}(F^e_* (R/I), R/I)$, to which we associate a divisor $\Delta_{R/I}$.  By construction (and using Theorem \ref{TheoremMapsInduceDivisors}) we see that the existence and that properties (i) and (ii) are obvious.  For the rest of the properties, it is harmless to assume that $R$ is local.  Notice that the map $\phi_I$ is not the zero map on any irreducible component of $\Spec R/I$ because $(R, \Delta)$ is sharply $F$-pure at the minimal primes of $I$.  To show that $\Delta_{R/I}$ is canonically determined, note that if one chooses a different $\phi : F^e_ * R \rightarrow R$ associated to $\Delta$, the associated map $\phi_I$ will differ from the original choice by multiplication by a unit, and so $\Delta_{R/I}$ will not change.  Likewise, if one chooses a different $e > 0$, then using Theorem \ref{TheoremMapsInduceDivisors}(e,f), we obtain the same $\Delta_{R/I}$ yet again.

In terms of (iii), this simply follows from Observation \ref{ObsMapsRestrictToCenters}.  Notice now that (iv) is a generalization of (iii).  Condition (iv) follows by an argument similar to the one in Observation \ref{ObsMapsRestrictToCenters} since we simply consider a diagram
\[
\xymatrix@C=66pt{
F^d_* R \ar[d]^{F^d_* \alpha} \ar[d]^{F^d_* \alpha} \ar[r]^{F^d_* (\times a)} & F^d_* R \ar[d]^{F^d_* \alpha} \ar[r]^{\phi^n} & R \ar[d]^{\alpha} \\
F^d_* R/I \ar[r]^{F^d_* (\times \overline{a})} &  F^d_* R/I  \ar[r]^{\phi_I^{n}} & R/I \\
}
\]
for each $d = ne$ instead and various $a \in \ba^{\lceil t(p^d - 1) \rceil}$. In the diagram above, $\phi^n$ is the composition of $\phi$ with itself $(n-1)$-times as before.  Now again, the map obtained by composing the bottom row is surjective if an only if the map obtained from composing the top row is surjective.

Condition (v) will follow from (vi) since a pair is strongly $F$-regular if and only if it has no centers of sharp $F$-purity.  Therefore, we conclude by proving (vi). Suppose that $P \in \Spec R$ contains $I$, and corresponds to $\overline{P} \in \Spec R/I$.  We will show that $P$ is a center of sharp $F$-purity of $(R, \Delta)$ if and only if $\overline{P}$ is a center of sharp $F$-purity for $(R/I, \Delta_{R/I})$.  First suppose that $P$ is a center of sharp $F$-purity for $(R, \Delta)$.  This is equivalent to the condition that $\phi(F^e_* P) \subseteq P$.  This implies that $\phi_I(F^e_* \overline{P}) \subseteq \overline{P})$.  The converse direction reverses this and is essentially the same as the argument given in the proof of \cite[Proposition 7.5]{SchwedeCentersOfFPurity}.
\end{proof}

\begin{remark}
I do not know if one can somehow generalize the ``centers of sharp $F$-purity'' of condition (vi) to all $F$-compatible ideals.  It is not hard to see that one does obtain a bijection between radical $F$-compatible ideals since they are intersections of centers of sharp $F$-purity.  Section \ref{SectionAdjointLikeTestIdeals} is concerned with proving an analog of (vi) for the big test ideal.
\end{remark}

Using the ideas of Fedder's criterion, we also obtain the following result.

\begin{theorem}
\label{ThmMainAppOfFedder}
Suppose that $S$ is a regular $F$-finite ring such that $F^e_*S$ is a free $S$ module (for example, if $S$ is local) and that $R = S/I$ is a quotient that is a normal domain.  Further suppose that $\Delta_R$ is an effective $\bQ$-divisor on $\Spec R$ such that $\Hom_R(F^e_* R((p^e - 1) \Delta ), R)$ is a rank one free $F^e_* R$-module (for example, if $R$ is local and $(p^e - 1)(K_R + \Delta$) is Cartier).  Then there exists an effective $\bQ$-divisor $\Delta_S$ on $\Spec S$ such that:
\begin{itemize}
\item[(a)]  $(p^e - 1)(K_S + \Delta_S)$ is Cartier.
\item[(b)]  $I$ is $(\Delta_S, F)$-compatible and $(S, \Delta_S)$ is sharply $F$-pure at the minimal associated primes of $I$ (that is, at the generic points of $\Spec S/I$).
\item[(b)]  $\Delta_S$ induces $\Delta_R$ as in Theorem \ref{ThmFirstFAdjunction}.
\end{itemize}
\end{theorem}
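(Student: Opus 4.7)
The plan is to use Fedder's criterion to lift the map associated to $\Delta_R$ up to $S$, and then use the correspondence between maps and divisors of Theorem \ref{TheoremMapsInduceDivisors} to extract $\Delta_S$. Fix an $F^e_* S$-module generator $T$ of $\Hom_S(F^e_* S, S)$, which exists because $S$ is regular with $F^e_* S$ free. By Theorem \ref{TheoremDivisorsInduceMaps}, the divisor $\Delta_R$ corresponds to a map $\phi_R \in \Hom_R(F^e_* R, R)$ that is a generator of the free rank-one submodule $\Hom_R(F^e_* R((p^e - 1)\Delta_R), R)$. Using the isomorphism
\[
\Hom_R(F^e_* R, R) \cong F^e_*(I^{[p^e]} : I) / F^e_* I^{[p^e]}
\]
from \cite[Lemma 1.6]{FedderFPureRat} (recorded in the proof of Corollary \ref{CorDescriptionOfFedderColon}), I lift $\phi_R$ to an element $f \in (I^{[p^e]} : I)$ and define $\phi_S := T(f \cdot \blank) \in \Hom_S(F^e_* S, S)$. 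Applying Theorem \ref{TheoremMapsInduceDivisors} to $\phi_S$ produces the desired effective $\bQ$-divisor $\Delta_S$ on $\Spec S$, and part (a) of the conclusion is immediate from Theorem \ref{TheoremMapsInduceDivisors}(b). Furthermore, $\phi_S$ is a generator of $\Hom_S(F^e_* S((p^e-1)\Delta_S), S)$ by Theorem \ref{TheoremMapsInduceDivisors}(c), a fact we will use twice below.

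For part (b), observe first that $\phi_S(F^e_* I) = T(F^e_* fI) \subseteq T(F^e_* I^{[p^e]}) \subseteq I$ since $T$ is $S$-linear. Because $\phi_S$ is a full generator of $\Hom_S(F^e_* S((p^e-1)\Delta_S), S)$ and $I$ is a radical (in fact prime) ideal, Proposition \ref{PropUniformlyFCompatIfAndOnlyIf} upgrades this single check into genuine $F$-compatibility of $I$ with respect to $(S, \Delta_S)$. For the sharply $F$-pure claim at the minimal primes of $I$ — only $I$ itself, since $R$ is a domain — I localize $S$ at $I$ and work modulo $IS_I$: by construction, $\phi_S$ descends on $R = S/I$ to a map which, under the Fedder identification, is $\phi_R$ itself. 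Localizing $\phi_R$ at the generic point of $\Spec R$ gives a nonzero $k(R)$-linear map $F^e_* k(R) \to k(R)$, and this is automatically surjective because $k(R)$ is a field. The local surjectivity criterion from Observation \ref{ObsMapsRestrictToCenters} then pulls this surjectivity back to $(\phi_S)_I$, which is precisely sharp $F$-purity of $(S, \Delta_S)$ at $I$.

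Part (c) is then essentially bookkeeping: the map $\phi_S$ descends modulo $I$ to $\phi_R$ (this is how $f$ was chosen), and Theorem \ref{ThmFirstFAdjunction} identifies the divisor induced on $\Spec R$ by the restriction of $\phi_S$ with the divisor associated to $\phi_R$ via Theorem \ref{TheoremMapsInduceDivisors} — namely $\Delta_R$.

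The main conceptual obstacle is the jump from the single map $\phi_S$ sending $F^e_* I$ into $I$ to genuine uniform $(\Delta_S, F)$-compatibility of $I$ across all exponents and all maps in $\Hom_S(F^{e'}_* S(\lceil (p^{e'}-1)\Delta_S\rceil), S)$; this is exactly what Proposition \ref{PropUniformlyFCompatIfAndOnlyIf} delivers, but its hypothesis requires $\phi_S$ to be a generator of the twisted Hom corresponding to $\Delta_S$ rather than merely an element of it. That generating property is not automatic from Fedder's construction and is precisely what Theorem \ref{TheoremMapsInduceDivisors}(c) — applied here in the reverse direction from its usual use — supplies.
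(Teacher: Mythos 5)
Your proof is correct and follows the same strategy as the paper: lift the map $\phi_R$ associated to $\Delta_R$ to a map $\phi_S : F^e_* S \to S$ via Fedder's lemma, associate $\Delta_S$ to $\phi_S$ by Theorem \ref{TheoremMapsInduceDivisors}, and observe that the Fedder construction forces $\phi_S(F^e_* I) \subseteq I$ while nondegeneracy of $\phi_R$ gives sharp $F$-purity at the generic point of $V(I)$. You have in fact spelled out more carefully than the paper's one-line argument the step where Proposition \ref{PropUniformlyFCompatIfAndOnlyIf} (together with Theorem \ref{TheoremMapsInduceDivisors}(b,c) guaranteeing the freeness and generator hypotheses) is what upgrades the single containment $\phi_S(F^e_* I) \subseteq I$ to full $(\Delta_S, F)$-compatibility across all exponents.
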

\begin{proof}
The key point is that every map $F^e_* R \rightarrow R$ is obtained by restricting a map $F^e_* S \rightarrow S$ to $R$, see \cite[Lemma 1.6]{FedderFPureRat}.  Note that condition (b) follows immediately since the map $F^e_* R \rightarrow R$ we are concerned with is non-zero.
\end{proof}

\begin{remark}
The $\Delta_S$ constructed in the above theorem is in no way canonically chosen.
\end{remark}

\begin{remark}
I do not know of anything like a characteristic zero analog of this except in the case that $X \subseteq Y$ is a complete intersection, see \cite{EinMustata}, also compare with \cite{KawakitaComparisonNonLCI} and \cite{EinMustataJetSchemesAndSings}.
\end{remark}

We now show that for an $F$-pure pair, there are at most finitely many centers of sharp $F$-purity (equivalently there are at most finitely many $(\Delta,F)$-compatible ideals).  We give a proof that is written using the language of divisors.  However the same proof may be given without this language (this was done in a preprint of this paper).  This result was proved for local rings in \cite[Corollary 5.2]{SchwedeCentersOfFPurity}, using the method of \cite{EnescuHochsterTheFrobeniusStructureOfLocalCohomology} or a modification of the method of \cite{SharpGradedAnnihilatorsOfModulesOverTheFrobeniusSkewPolynomialRing}.  Finally, essentially the same result has also been obtained independently by Kumar and Mehta, \cite{KumarMehtaFiniteness}.

\begin{theorem}
\label{ThmFinitelyManyCenters}
 If $(R, \Delta, \ba_{\bullet})$ is sharply $F$-pure, then there are at most finitely many centers of sharp $F$-purity.
\end{theorem}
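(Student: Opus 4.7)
The plan is to run a noetherian induction on $\dim R$, using the big test ideal to carve out a proper closed subscheme in which all but finitely many centers must lie, and then using the adjunction theorem proved above (Theorem \ref{ThmFirstFAdjunction}) to descend to a lower-dimensional triple where the inductive hypothesis applies. The base case $\dim R = 0$ is trivial because $R$ then has only finitely many prime ideals.

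For the inductive step, set $\tau = \tau_b(R; \Delta, \ba_{\bullet})$. First I would establish, by an argument parallel to Proposition \ref{PropositionBigTestIdealIsSmallest}, that $\tau$ is the unique smallest $F$-compatible ideal whose intersection with $R^{\circ}$ is non-trivial; this ensures that every center of sharp $F$-purity which is not a minimal prime of $R$ must contain $\tau$. Since $R$ has only finitely many minimal primes by noetherianity, the problem reduces to bounding the number of centers containing $\tau$. If $\tau = R$, then by the triple version of Lemma \ref{LemmaPropertiesOfUniformlyFCompatible}(6) there are no further centers and we are finished. Otherwise, $\tau$ has finitely many minimal primes $P_1, \dots, P_n$, and since $(R,\Delta,\ba_\bullet)$ is sharply $F$-pure all $F$-compatible ideals are radical, so $\tau = P_1 \cap \cdots \cap P_n$ and each $P_i$ is itself a center by Lemma \ref{LemmaPropertiesOfUniformlyFCompatible}(5). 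Every center containing $\tau$ must therefore contain some $P_i$.

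It now suffices to bound, for each $P_i$, the number of centers of $(R,\Delta,\ba_\bullet)$ that contain $P_i$. Apply Theorem \ref{ThmFirstFAdjunction}(vi) to produce a bijection between such centers and the centers of the induced triple $(R/P_i, \Delta_{R/P_i}, \overline{\ba_\bullet})$ on the quotient. By Theorem \ref{ThmFirstFAdjunction}(iv), the induced triple inherits sharp $F$-purity, and $\dim R/P_i < \dim R$, so the inductive hypothesis supplies the desired finiteness. Taking the union over the finitely many $P_i$ completes the inductive step.

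The hardest point will be the adjunction descent when $R/P_i$ fails to be normal (Theorem \ref{ThmFirstFAdjunction} is stated under a normality hypothesis) and the fact that the results of Section 4 are written for a single pair $(R,\Delta,\ba^t)$ rather than for the graded system $\ba_\bullet$. To handle the former, one passes instead to the normalization of $R/P_i$ via Proposition \ref{PropositionPropertiesOfNormalizedRestrictedDelta}, where the centers pull back faithfully; to handle the latter, one establishes the routine extensions of Proposition \ref{PropositionBigTestIdealIsSmallest} and Theorem \ref{ThmFirstFAdjunction}(iv),(vi) to graded systems, checking compatibility of the maps $T_{ne}$ with the exponents $\lceil t_n(p^{ne}-1)\rceil$ as $n$ varies. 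Once these technical extensions are in place, the induction runs cleanly.
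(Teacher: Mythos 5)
Your proposal takes a genuinely different route from the paper. You run a noetherian induction on $\dim R$ using the big test ideal $\tau_b$ to locate the non-minimal centers, whereas the paper argues by contradiction: assuming infinitely many centers, it uses the pigeonhole principle to extract infinitely many of the same height whose Zariski closure is an irreducible set $V(P)$, shows $P$ is itself $F$-compatible, restricts $\Spec R$ to an affine open where $R/P$ is normal (which preserves density), applies adjunction, and then contradicts properness of the non-strongly-$F$-regular locus of $(R/P, \Delta_P)$.

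The difference matters, and the place where your proposal has a genuine gap is exactly the normality issue you flag. You propose to pass to the normalization $(R/P_i)^N$ and assert that "the centers pull back faithfully" via Proposition \ref{PropositionPropertiesOfNormalizedRestrictedDelta}. But that proposition's part (v) goes in the \emph{opposite} direction: it takes $F$-compatible ideals of $(R/P_i)^N$ and produces $F$-compatible ideals of $R$. What your induction needs is that every center $Q \supseteq P_i$ of $(R,\Delta,\ba_\bullet)$ determines a center of $((R/P_i)^N, \Delta_{W^N})$ — i.e., that the extension of $Q/P_i$ to $(R/P_i)^N$ (or some prime over it) is compatible with $\phi_{P_i}^N$. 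This is neither stated in the paper nor easy: $\phi_{P_i}^N$ is only $(R/P_i)^N$-linear via $p^e$-th powers, so compatibility of $\phi_{P_i}$ with $Q/P_i$ does not obviously push forward to the extended ideal. Moreover, you cannot fix this by shrinking to an affine open where $R/P_i$ is normal, because the induction must account for \emph{all} centers over $P_i$, and shrinking can discard some; the paper is able to shrink precisely because it only needs the chosen dense set of centers to stay dense, a much weaker requirement. A secondary, smaller gap: the adjunction theorems require the local $\bQ$-Gorenstein isomorphism $\Hom_R(F^e_* R((p^e-1)\Delta),R) \cong F^e_*R$, which forces the paper to begin by passing to a finite affine cover where a single generating map $\phi$ (and a single $a \in \ba_{p^e-1}$ with $\phi(F^e_* a) = 1$) exists; your proposal does not perform this reduction, though it is needed before Theorem \ref{ThmFirstFAdjunction} can be invoked at all. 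The extension of the machinery from a single $\ba^t$ to a graded system $\ba_\bullet$ is the least of the three concerns, and the paper in fact avoids it entirely by absorbing the ideal term into the map: replacing $\phi(\blank)$ by $\phi(F^e_* a \cdot \blank)$ reduces the triple to a pair at the outset.

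In short: the inductive skeleton is attractive, but the decisive step — descending from $R$ to a lower-dimensional ring while keeping track of \emph{every} center lying over $P_i$ — fails when $R/P_i$ is not normal, and the normalization does not repair it. The paper's density/contradiction argument is precisely designed to sidestep this obstruction.
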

\begin{proof}
We may prove this on a finite affine cover of $\Spec R$.  Thus, we may assume\footnote{This happens after localizing each point, so it happens in a neighborhood of each point, so we may use such neighborhoods to cover $\Spec R$} there exists a map $\phi : F^e_* R(\lceil (p^e - 1) \Delta \rceil) \rightarrow R$ that sends some element $a \in F^e_* \ba_{p^e - 1}$ to $1$.  Note, every center of sharp $F$-purity $Q \in \Spec R$ for $(R, \Delta, \ba_{\bullet})$ satisfies $\phi(F^e_* a Q) \subseteq Q$.  Our goal is to show that there are finitely many prime ideals $Q$ such that $\phi(F^e_* a Q) \subseteq Q$.

First note that we can replace $\phi(\blank)$ by $\phi(a \times \blank)$ and so ignore the term $a$.
For a contradiction, assume there are infinitely many such prime ideals $Q$ such that $\phi(F^e_* Q) \subseteq Q$.   We choose a collection $\mathfrak{Q}$ of infinitely many primes ideals $Q$ satisfying:
\begin{itemize}
 \item[(i)]  $\phi(F^e_* Q) \subseteq Q$
 \item[(ii)]  All $Q \in \mathfrak{Q}$ have the same height.
\item[(iii)]  The closure of the set $\mathfrak{Q}$ in the Zariski topology is an irreducible (possibly non-proper) closed subset $W$ of $\Spec R$.  We set $P$ to be the generic point of that subset $W$ (in other words, $P = \cap_{Q \in \mathfrak{Q}} Q$).
\end{itemize}
Using the pigeon-hole principal, it is not difficult to see that a set $\mathfrak{Q}$ satisfying conditions (i), (ii) and (iii) exists.

We make two observations about the prime ideal $P$:
\begin{itemize}
 \item{}  $P$ must have smaller height than the elements of $\mathfrak{Q}$
 \item{}  $P$ satisfies $\phi(F^e_* P) \subseteq P$ since $P$ is the intersection of the elements of $\mathfrak{Q}$
\end{itemize}
By restricting to an open affine set of $\Spec R$ containing $P$, we may assume that $R/P$ is normal (the elements of $\mathfrak{Q}$ will still form a dense subset of $\Spec R/P$).  Therefore, $\phi$ induces a divisor $\Delta_P$ on $\Spec R/P$ as in Theorem \ref{ThmFirstFAdjunction}.  The set of elements in $\mathfrak{Q}$ restrict to centers of sharp $F$-purity for $(R/P, \Delta_P)$ by Theorem \ref{ThmFirstFAdjunction}(vi).  As noted above, $\{ Q/P \text{ } | \text{ } Q \in \mathfrak{Q} \}$ is dense in $\Spec R/P$ and simultaneously $\{ Q/P \text{ } | \text{ } Q \in \mathfrak{Q} \}$ is contained in the non-strongly $F$-regular locus of $(R/P, \Delta_P)$, which is closed and proper.  This is a contradiction.
\end{proof}

\begin{remark}
If one wishes to assume that $R$ is not necessarily normal and that $\Delta = 0$ (or even that $\Delta$ is some sort of appropriate generalization of a $\bQ$-divisor, see for example \cite{HartshonreGeneralizedDivisorsAndBiliaison} or \cite[Chapter 16]{KollarFlipsAndAbundance}), the proof goes through without change.
\end{remark}


\begin{corollary}
\label{CorFinitelyManyCompatiblySplit}
Suppose that $X$ is a noetherian $F$-finite Frobenius split scheme with splitting $\phi : F^e_* \O_X \rightarrow \O_X$, then there exists at most finitely many $\phi$-compatibly split subschemes.
\end{corollary}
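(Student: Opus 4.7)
The plan is to reduce to Theorem~\ref{ThmFinitelyManyCenters} via the correspondence of Corollary~\ref{CorCompatFrobeniusSplitIdealsAreCenters}. Since $X$ is noetherian, cover it by finitely many affine opens $U_1, \ldots, U_n$. A $\phi$-compatibly split closed subscheme $Y \subseteq X$ is determined by its restrictions $Y \cap U_i$, each of which is a $(\phi|_{U_i})$-compatibly split subscheme of $U_i$. Hence it suffices to prove finiteness on each affine piece, so we may assume $X = \Spec R$ for an $F$-finite noetherian ring $R$ equipped with a splitting $\phi : F^e_* R \to R$.

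Next, a compatibly split subscheme of $\Spec R$ corresponds to an ideal $I \subseteq R$ with $\phi(F^e_* I) \subseteq I$. Since $\phi$ is surjective, any such $I$ is radical (iterate $\phi$ on a sufficiently high Frobenius power of any element of $\sqrt{I}$), and the minimal primes of $I$ are themselves $\phi$-compatible by the argument of Lemma~\ref{LemmaPropertiesOfUniformlyFCompatible}(5). Therefore every compatibly split ideal is the intersection of its finitely many minimal primes, each of which is compatibly split. It thus suffices to bound the number of prime ideals $Q \subseteq R$ with $\phi(F^e_* Q) \subseteq Q$.

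When $R$ is normal, Theorem~\ref{TheoremMapsInduceDivisors} produces an effective $\bQ$-divisor $\Delta_\phi$ with $(p^e-1)(K_R + \Delta_\phi)$ Cartier; part (d) of that theorem combined with surjectivity of $\phi$ shows that $(R, \Delta_\phi)$ is sharply $F$-pure; Corollary~\ref{CorCompatFrobeniusSplitIdealsAreCenters} identifies the $\phi$-compatibly split primes with the centers of sharp $F$-purity of $(R, \Delta_\phi)$; and Theorem~\ref{ThmFinitelyManyCenters} (taking $\ba_\bullet = R$) bounds the latter by a finite number. For general non-normal $R$, we invoke the remark immediately following Theorem~\ref{ThmFinitelyManyCenters}, which asserts that its proof goes through verbatim for non-normal $R$ with $\Delta = 0$. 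Concretely, that proof is driven solely by the single map $\phi$: extract an infinite collection of compatible primes with a common generic point $P$, shrink to an affine open on which $R/P$ is normal (possible because $R$ is excellent and reduced), restrict $\phi$ to $R/P$ using the adjunction of Theorem~\ref{ThmFirstFAdjunction}, and derive a contradiction from the fact that the restricted primes must simultaneously be Zariski-dense in $\Spec R/P$ and lie in the proper non-strongly-$F$-regular locus of $(R/P, \Delta_P)$.

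The only mild obstacle is ensuring that the non-normal variant of Theorem~\ref{ThmFinitelyManyCenters} applies to the bare map $\phi$ rather than a full triple $(R, \Delta, \ba_\bullet)$, but this is exactly what the remark following that theorem guarantees, so no new work is required beyond the reduction and bookkeeping above.
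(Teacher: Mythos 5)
Your proposal is correct and follows essentially the same approach as the paper: the paper's proof simply covers $X$ by finitely many affines and invokes Theorem~\ref{ThmFinitelyManyCenters} (together with its following remark on the non-normal case), which is exactly your reduction. You supply considerably more detail than the paper's two-sentence proof — in particular spelling out why closed subschemes are determined by their restrictions to the cover, why compatibly split ideals are radical with compatibly split minimal primes, and how the normal case reduces to Corollary~\ref{CorCompatFrobeniusSplitIdealsAreCenters} while the general case invokes the remark — but these are precisely the points the paper leaves implicit in the phrase ``by the above argument.'' One small imprecision: when you say ``restrict $\phi$ to $R/P$ using the adjunction of Theorem~\ref{ThmFirstFAdjunction},'' note that the restriction of $\phi$ to $R/P$ needs only $\phi(F^e_*P)\subseteq P$, not the normality of $R$ that Theorem~\ref{ThmFirstFAdjunction} assumes; the divisor $\Delta_P$ is then obtained on the normal ring $R/P$ via Theorem~\ref{TheoremMapsInduceDivisors}, and the bijection of compatible primes above $P$ is elementary. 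This is also the content of the remark you cite, so the argument stands.
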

\begin{proof}
Use a finite affine cover of $X$.  On each open affine subset, there are finitely many compatibly split subschemes by the above argument.
\end{proof}

\section{Comments on adjoint-like test ideals and restriction theorems}
\label{SectionAdjointLikeTestIdeals}

Based on the work of Takagi, it is natural to hope that there is a restriction theorem of (generalized) adjoint-like test ideals, similar to the ones in \cite{TakagiPLTAdjoint} and \cite{TakagiHigherDimensionalAdjoint}.  Using the results of the previous section, we can accomplish this.

\begin{definition}
\label{DefnPenultimateAdjointTestIdeal}
Suppose that $R$ is $F$-finite normal ring and that $(R, \Delta, \ba^t)$ is a triple.  Further suppose that :
\begin{itemize}
\item[(a)] $Q \in \Spec R$ is a center of sharp $F$-purity for $(R, \Delta)$.
\item[(b)] $\ba \cap (R \setminus Q) \neq \emptyset$.
\item[(c)] $(R_Q, \Delta|_{\Spec R_Q})$ is sharply $F$-pure.
\item[(d)] $R/Q$ is normal.
\item[(e)] There exists an integer $e_0$ such that $\Hom_R(F^{e_0}_* R((p^{e_0} - 1)\Delta), R)$ is free as an $F^{e_0}_* R$-module.
\item[(f)] The integer $e_0$ is the smallest positive integer satisfying condition (e).
\end{itemize}
Fix a map $\phi_{e_0} = \phi : F^{e_0}_* R \rightarrow R$ corresponding to $\Delta$.  We define the \emph{big test ideal of $(R, \Delta, \ba^t)$ outside of $Q$}, denoted $\tau_{b}(R; \nsubseteq Q; \Delta, \ba^t)$ (if it exists), to be the smallest ideal $J$ satisfying the following two conditions:
\begin{itemize}
 \item  $J$ is not contained in $Q$ (that is, $J \cap (R \setminus Q) \neq \emptyset$).
 \item $\phi_{ne_0}(F^{ne_0}_* \ba^{\lceil t(p^{ne_0} - 1) \rceil}  J) \subseteq J$ for all $n \geq 0$ where $\phi_{ne_0}$ is as in Definition \ref{DefinitionComposingMaps}.
\end{itemize}
\end{definition}

\begin{remark}
Note that with regards to Definition \ref{DefnPenultimateAdjointTestIdeal}(b), using the fact that $\ba \cap (R \setminus Q) \neq \emptyset$, we see that $Q$ is a center of sharp $F$-purity for $(R, \Delta)$ if and only if it is a center of sharp $F$-purity for $(R, \Delta, \ba^t)$.  Likewise, the localized pair $(R_Q, \Delta|_{\Spec R_Q})$ is sharply $F$-pure if and only if the localized triple $(R_Q, \Delta|_{\Spec R_Q}, (\ba R_Q)^t)$ is sharply $F$-pure since $\ba R_Q = R_Q$.
\end{remark}

\begin{remark}
It is unnecessary to choose $e_0$ to be the \emph{smallest} integer satisfying condition (e).  If one uses any integer $e_0$ satisfying condition (e), then one obtains the same $\tau_{b}(R, \nsubseteq Q; \Delta, \ba^t)$.  We will not verify this here as the proof is rather involved and is essentially the same argument as in Proposition \ref{PropositionBigTestIdealIsSmallest}.
\end{remark}

\begin{remark}
It is also interesting to study the smallest ideal $J$ which properly contains $Q$ and such that $\phi_{ne_0}(F^{ne_0}_*  \ba^{\lceil t(p^{ne_0} - 1)\rceil}  J) \subseteq J$ for all $n \geq 0$ (again, if it exists).  For future reference, we will denote that ideal by $\tau_{b}(R, \supseteq Q; \Delta, \ba^t)$.
\end{remark}

\begin{remark}
If $\ba = R$, then $\tau_{b}(R, \nsubseteq Q; \Delta) = \tau_{b}(R, \nsubseteq Q; \Delta, \ba^t)$ is the unique smallest ideal not contained in $Q$ such that $\phi_{e_0}(F^{e_0}_* J) \subseteq J$.  Likewise, if $\ba = R$, $\tau_{b}(R, \supseteq Q; \Delta) = \tau_{b}(R, \supseteq Q; \Delta, \ba^t)$ is the smallest ideal properly containing $Q$ such that $\phi_{e_0}(F^{e_0}_*  J) \subseteq J$.
\end{remark}

\begin{remark}
\label{RemarkOnNonIntegralCenters}
It is probably interesting to look at non-prime radical ideals $Q$ which are $F$-compatible with respect to $(R, \Delta)$.  Set $R^{\circ Q}$ to be the set of elements not contained in any minimal prime of $Q$.  In that case, one should probably consider ideals $J$  minimal with respect to the conditions that  $J \cap R^{\circ Q} \neq \emptyset$ and $\phi(F^{e_0}_* J) \subseteq J$.  If one takes $Q$ to be the zero ideal of $R$, then $\tau_{b}(R, \nsubseteq Q; \Delta)$ is just the usual big test ideal, see Proposition \ref{PropositionBigTestIdealIsSmallest}.  However, in this paper, we will not work in this generality.
\end{remark}



\begin{remark}
\label{RemarkRelationsBetweenAdjointTestIdeals}
Suppose that the ideals $\tau_{b}(R, \nsubseteq Q; \Delta, \ba^t)$ and $\tau_{b}(R, \supseteq Q; \Delta, \ba^t)$ exist.  Notice that $\tau_{b}(R, \nsubseteq Q; \Delta, \ba^t) \subseteq \tau_{b}(R, \supseteq Q; \Delta, \ba^t)$.  Furthermore, we claim that
\begin{equation}
\label{EquationRestrictionClaim}
\tau_{b}(R, \nsubseteq Q; \Delta, \ba^t) + Q = \tau_{b}(R, \supseteq Q; \Delta, \ba^t).
\end{equation}
The containment $\supseteq$ follows from the definition of $\tau_{b}(R, \supseteq Q; \Delta, \ba^t)$ because $\tau_{b}(R, \nsubseteq Q; \Delta, \ba^t) + Q$ satisfies
\begin{equation}
\label{EqnCompatiContainment}
\phi_{ne_0}(F^{ne_0}_* \ba^{\lceil t(p^{ne_0} - 1) \rceil}(\tau_{b}(R, \nsubseteq Q; \Delta, \ba^t) + Q)) \subseteq \tau_{b}(R, \nsubseteq Q; \Delta, \ba^t) + Q
 \end{equation}
since both $Q$ and $\tau_{b}(R, \nsubseteq Q; \Delta, \ba^t)$ satisfy the condition of Equation \ref{EqnCompatiContainment}.  But then since both $\tau_{b}(R, \nsubseteq Q; \Delta, \ba^t)$ and $Q$ are contained in $\tau_{b}(R, \supseteq Q; \Delta, \ba^t)$, we are done.
\end{remark}

We can now prove that $\tau_{b}(R, \supseteq Q; \Delta, \ba^t)$ exists.

\begin{proposition}
\label{PropInitialRestrictionTheoremForTestIdeals}
Suppose that $(R, \Delta, \ba^t)$ and $Q \in \Spec R$ are as in Definition \ref{DefnPenultimateAdjointTestIdeal}.  Further suppose that $\alpha : R \rightarrow R/Q$ is the natural surjection.  Suppose that $\Delta_{R/Q}$ is the $\bQ$-divisor on $\Spec R/Q$ corresponding to $\Delta$ as in Theorem \ref{ThmFirstFAdjunction}.  Then $\tau_{b}(R, \supseteq Q; \Delta, \ba^t)$ exists and is equal to $\alpha^{-1}(\tau_{b}(R/Q; \Delta_{R/Q}, \overline\ba^t))$.  In particular
\[ \tau_{b}(R, \supseteq Q; \Delta, \ba^t)/ Q = \tau_{b}(R, \supseteq Q; \Delta, \ba^t)|_{R/Q} = \tau_{b}(R/Q; \Delta_{R/Q}, \overline\ba^t). \]
\end{proposition}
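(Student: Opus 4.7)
The plan is to exhibit $\alpha^{-1}\bigl(\tau_b(R/Q;\Delta_{R/Q},\overline\ba^t)\bigr)$ as the unique smallest ideal of $R$ satisfying the two defining conditions of $\tau_b(R,\supseteq Q;\Delta,\ba^t)$, via the order-preserving bijection between ideals of $R$ containing $Q$ and ideals of $R/Q$ given by $J \mapsto J/Q =: \overline J$. The engine is Observation \ref{ObsMapsRestrictToCenters}: since $Q$ is $F$-compatible for $(R,\Delta)$, the map $\phi_{e_0}$ descends along $\alpha$ to $\phi_{e_0,Q} : F^{e_0}_*(R/Q) \to R/Q$, and by the construction in Theorem \ref{ThmFirstFAdjunction} this $\phi_{e_0,Q}$ is precisely the map corresponding to $\Delta_{R/Q}$.

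First I would iterate the commutative square
\[
\xymatrix{
F^{e_0}_* R \ar[r]^{\phi_{e_0}} \ar[d]_{F^{e_0}_*\alpha} & R \ar[d]^\alpha \\
F^{e_0}_* (R/Q) \ar[r]^{\phi_{e_0,Q}} & R/Q
}
\]
to obtain $\alpha \circ \phi_{ne_0} = \phi_{ne_0,Q} \circ F^{ne_0}_*\alpha$, where $\phi_{ne_0,Q}$ is the $n$-fold self-composition of $\phi_{e_0,Q}$ in the sense of Definition \ref{DefinitionComposingMaps}. Since $\alpha$ is surjective and $\alpha(\ba^{m}) = \overline\ba^{m}$ for every $m \geq 0$, this equation shows that for any ideal $J\supseteq Q$, the compatibility condition $\phi_{ne_0}(F^{ne_0}_* \ba^{\lceil t(p^{ne_0}-1)\rceil} J) \subseteq J$ holds if and only if $\phi_{ne_0,Q}(F^{ne_0}_* \overline\ba^{\lceil t(p^{ne_0}-1)\rceil} \overline J) \subseteq \overline J$. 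Likewise, the condition that $J$ properly contains $Q$ translates exactly to $\overline J \neq 0$, which in the normal domain $R/Q$ is the same as $\overline J \cap (R/Q)^\circ \neq \emptyset$.

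Combining these two translations, the collection of ideals of $R$ satisfying the defining properties of $\tau_b(R,\supseteq Q;\Delta,\ba^t)$ is in order-preserving bijection with the collection of ideals of $R/Q$ satisfying the hypotheses of Proposition \ref{PropositionBigTestIdealIsSmallest}(ii) applied to $(R/Q,\Delta_{R/Q},\overline\ba^t)$ with generator $\phi_{e_0,Q}$. That proposition supplies a unique smallest such ideal of $R/Q$, namely $\tau_b(R/Q;\Delta_{R/Q},\overline\ba^t)$, whose preimage under $\alpha$ is therefore the unique smallest ideal of $R$ with the required properties. This establishes both existence and the claimed formula, and the ``in particular'' statement is immediate upon quotienting by $Q$. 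No serious obstacle arises; the two technical items to confirm carefully are that $\phi_{ne_0,Q}$ really equals the $n$-fold composition of $\phi_{e_0,Q}$ (so Proposition \ref{PropositionBigTestIdealIsSmallest}(ii) applies with the right generator), which follows by iterating the square above, and that $\overline\ba \cap (R/Q)^\circ \neq \emptyset$, which is immediate from hypothesis (b) of Definition \ref{DefnPenultimateAdjointTestIdeal}.
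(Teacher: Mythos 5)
Your argument is correct and follows the same route as the paper: both proofs translate the defining compatibility condition back and forth along the surjection $\alpha$ using the commuting square from Observation \ref{ObsMapsRestrictToCenters}, and then appeal to the order-preserving bijection between ideals of $R$ containing $Q$ and ideals of $R/Q$. You make explicit the appeal to Proposition \ref{PropositionBigTestIdealIsSmallest}(ii) to identify the minimal ideal in $R/Q$ with $\tau_b(R/Q;\Delta_{R/Q},\overline\ba^t)$, which the paper leaves implicit but relies on just as you do.
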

\begin{proof}
As noted before, it is easy to see that if $J$ contains $Q$ and $\phi_{ne_0}(F^e_* \ba^{\lceil t(p^{ne_0} - 1) \rceil} J) \subseteq J$, then $\phi_{ne_0,Q}\left(F^e_* \overline \ba^{\lceil t(p^{ne_0} - 1) \rceil} (J/Q) \right) \subseteq J/Q$.  Conversely, if we have an ideal $J \supseteq Q$ such that $\phi_{ne_0,Q}\left(F^e_* \overline \ba^{\lceil t(p^{ne_0} - 1) \rceil} (J/Q) \right) \subseteq J/Q$ then $\phi_{ne_0}(F^e_* \ba^{\lceil t(p^{ne_0} - 1) \rceil} J) \subseteq J + Q = J$.  But ideals of $R$ containing $Q$ are in bijection with ideals of $R/Q$.  This completes the proof.
\end{proof}

Once we have verified that $\tau_{b}(R, \nsubseteq Q; \Delta, \ba^t)$ exists, Proposition \ref{PropInitialRestrictionTheoremForTestIdeals} will immediately imply the following restriction theorem.

\begin{corollary}
\label{CorRestrictionTheoremForAdjoint2}
Suppose that $(R, \Delta, \ba^t)$ and $Q \in \Spec R$ are as in Definition \ref{DefnPenultimateAdjointTestIdeal}.  Further suppose that $\Delta_{R/Q}$ is the $\bQ$-divisor on $R/Q$ corresponding to $\Delta$ as in Theorem \ref{ThmFirstFAdjunction}.
Then $\tau_{b}(R, \nsubseteq Q; \Delta, \ba^t)|_{R/Q} = (\tau_{b}(R, \nsubseteq Q; \Delta, \ba^t) + Q)|_{R/Q} = \tau_{b}(R/Q; \Delta_{R/Q}, \overline \ba^t)$.
\end{corollary}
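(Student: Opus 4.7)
The plan is a short chain of equalities combining Remark \ref{RemarkRelationsBetweenAdjointTestIdeals} with Proposition \ref{PropInitialRestrictionTheoremForTestIdeals}, which together make the corollary essentially a two-step bookkeeping argument once the ideal $\tau_{b}(R, \nsubseteq Q; \Delta, \ba^t)$ is known to exist. The first equality, $\tau_{b}(R, \nsubseteq Q; \Delta, \ba^t)|_{R/Q} = (\tau_{b}(R, \nsubseteq Q; \Delta, \ba^t) + Q)|_{R/Q}$, is purely formal: for any ideal $I \subseteq R$, the quotient map $\alpha : R \twoheadrightarrow R/Q$ satisfies $\alpha(I) = \alpha(I+Q)$, so the restriction to $R/Q$ is unchanged under adding $Q$.

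For the second equality I would first invoke Remark \ref{RemarkRelationsBetweenAdjointTestIdeals}, which supplies
\[
\tau_{b}(R, \nsubseteq Q; \Delta, \ba^t) + Q \;=\; \tau_{b}(R, \supseteq Q; \Delta, \ba^t).
\]
Then Proposition \ref{PropInitialRestrictionTheoremForTestIdeals} identifies the right-hand side as $\alpha^{-1}\bigl(\tau_{b}(R/Q; \Delta_{R/Q}, \overline{\ba}^t)\bigr)$, so applying $\alpha$ and using that $\alpha$ is surjective produces $\tau_{b}(R/Q; \Delta_{R/Q}, \overline{\ba}^t)$. Chaining these two identities with the trivial first equality completes the proof of the corollary.

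The principal obstacle lies not in this derivation but in the hypothesis implicit in its statement: that $\tau_{b}(R, \nsubseteq Q; \Delta, \ba^t)$ exists in the first place. Arbitrary intersections of ideals $J$ satisfying the invariance condition $\phi_{ne_{0}}(F^{ne_{0}}_{*}\ba^{\lceil t(p^{ne_{0}}-1)\rceil} J) \subseteq J$ again satisfy that invariance, but the condition $J \cap (R \setminus Q) \neq \emptyset$ is not a priori preserved under descending intersections, so a minimal element of the relevant family is not automatic. The natural strategy, modeled on the Matlis-dual construction in Proposition \ref{PropositionBigTestIdealIsSmallest}, is to realize the candidate ideal as an annihilator of a tight-closure-like submodule of $\bigoplus_{\bm \nsupseteq Q} E_{R/\bm}$ with the $R\{F^{e_{0}}\}$-structure coming from $\phi_{e_{0}}$; here sharp $F$-purity of $(R_{Q}, \Delta|_{\Spec R_{Q}})$ together with Theorem \ref{TheoremMapsInduceDivisors}(d) is what prevents the resulting annihilator from being swallowed by $Q$. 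Once such an existence statement is secured, the corollary follows immediately from the preceding two paragraphs.
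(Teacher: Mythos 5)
Your derivation of the corollary itself matches the paper's: the paper's proof is exactly ``Apply Proposition \ref{PropInitialRestrictionTheoremForTestIdeals} and Equation \ref{EquationRestrictionClaim}; the result will follow once we know that $\tau_{b}(R, \nsubseteq Q; \Delta, \ba^t)$ exists,'' which is the same two-step chain you spell out (your first equality, $\alpha(I)=\alpha(I+Q)$, is a trivial preliminary the paper doesn't bother to record). You are also right that the substantive content is the existence of $\tau_{b}(R, \nsubseteq Q; \Delta, \ba^t)$, which the paper defers to the remainder of the section.

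Where you diverge from the paper is in the sketch of how to secure that existence. You propose realizing the ideal as the annihilator of a tight-closure-like submodule of $\bigoplus_{\bm} E_{R/\bm}$ along the lines of Proposition \ref{PropositionBigTestIdealIsSmallest}; this is the route the paper explicitly acknowledges in Remark \ref{RemarkAlsoViaTightClosure} (``working out a version of tight closure theory using $c \in R \setminus Q$ instead of $c \in R^{\circ}$'') but then declines in favor of a more direct construction. The paper instead builds, via Lemmas \ref{LemmaExistenceOfMaps}--\ref{LemmaRepeatingMapWillMakeTestElements} and Proposition \ref{PropositionUniformExistenceForTestElts}, a single ``test element'' $b \in R\setminus Q$ with the uniformity property $b \in \phi_{n_d e_0}(F^{n_d e_0}_* d\, \ba^{\lceil t(p^{n_d e_0}-1)\rceil})$ for every $d \in R\setminus Q$, and then shows that $\tld\tau(R;b,\Delta,\ba^t) = \sum_{n\ge 0}\phi_{ne_0}(F^{ne_0}_* b\,\ba^{\lceil t(p^{ne_0}-1)\rceil})$ is the unique smallest ideal with the required two properties (Theorem \ref{TheoremPenultimateAdjointExists}). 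The paper also cautions (in the remark following Proposition \ref{PropositionUniformExistenceForTestElts}) that even in the Matlis-dual formulation one would still need something equivalent to Proposition \ref{PropositionUniformExistenceForTestElts} to produce the requisite test element, so the dual approach is not genuinely shorter. Your sketch is plausible but would require that work to be carried out; as stated it is an announced strategy rather than a proof of existence.
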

\begin{proof}
Apply Proposition \ref{PropInitialRestrictionTheoremForTestIdeals} and Equation \ref{EquationRestrictionClaim}.  The result will follow once we know that $\tau_{b}(R, \nsubseteq Q; \Delta, \ba^t)$ exists.
\end{proof}



The rest of the section will be devoted to proving that the ideal $\tau_{b}(R, \nsubseteq Q; \Delta, \ba^t)$ exists.

\begin{remark}
\label{RemarkAlsoViaTightClosure}
One way to do this is by working out a version of tight closure theory using $c \in R \setminus Q$ instead of $c \in R^{\circ}$.  However, we will use a more direct approach.
\end{remark}

We begin with several lemmas which are essentially the same as those used in the proof the existence of test elements.  The main technical result of the section is Proposition \ref{PropositionUniformExistenceForTestElts}, which combines the following three lemmas.


\begin{lemma}
\label{LemmaExistenceOfMaps}
Suppose that $(R, \Delta)$ is a sharply $F$-pure pair, $(p^e - 1)(K_R + \Delta)$ is integral, and that $\Hom_R(F^e_* R((p^e - 1)\Delta), R)$ is free as an $F^e_* R$-module with generator $\phi_e$ (by restriction, we also view $\phi_e$ as an element of $\Hom_R(F^e_* R, R)$).  Further suppose that $d \in R$ is an element not contained in any center of $F$-purity for $(R, \Delta)$.

Then:
\begin{itemize}
\item[(i)] $1 \in \phi_{n_0e}(F^{n_0e}_* (dR) )$ for some $n_0 > 0$.
\item[(ii)]  There exists $n_0 > 0$ such that $1 \in \phi_{ne}(F^{ne}_* (dR) )$ for all $n \geq n_0$.
\end{itemize}
\end{lemma}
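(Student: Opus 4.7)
The plan is to bundle the $\phi_e$-iterates of $d$ into a single ideal and force that ideal to equal $R$. Set $J_n := \phi_{ne}(F^{ne}_*(dR))$ for $n \geq 0$ (so $J_0 = dR$) and let $J := \bigcup_n J_n$. Sharp $F$-purity supplies $u \in R$ with $\phi_e(F^e_* u) = 1$, and the $R$-linearity identity $\phi_e(F^e_*(a^{p^e} b)) = a\, \phi_e(F^e_* b)$ then gives $d = \phi_e(F^e_*(d^{p^e} u)) \in J_1$, so $J_0 \subseteq J_1$. Applying $\phi_{ne}(F^{ne}_* \cdot)$ to this inclusion inductively yields $J_n \subseteq J_{n+1}$ for every $n$, so by Noetherianity $J = J_{n_0}$ for some $n_0 \geq 1$. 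By construction $\phi_e(F^e_* J) \subseteq J$, and iterating shows $\phi_{ke}(F^{ke}_* J) \subseteq J$ for every $k \geq 1$.

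The crux, and what I expect to be the main obstacle, is showing that $J$ is radical. If $y^m \in J$, pick $k$ with $p^{ke} \geq m$, so $y^{p^{ke}} \in J$. Since $(R, \Delta)$ is sharply $F$-pure, each composition $\phi_{ke}$ is surjective (a composition of surjections, compare with the remark following Definition \ref{DefnStronglyFRegularSharplyFPure}), so we can pick $u_k \in R$ with $\phi_{ke}(F^{ke}_* u_k) = 1$. Then
\[ y = y \cdot \phi_{ke}(F^{ke}_* u_k) = \phi_{ke}(F^{ke}_*(y^{p^{ke}} u_k)) \in \phi_{ke}(F^{ke}_* J) \subseteq J, \]
which shows $\sqrt{J} = J$.

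With $J$ radical and $\phi_e$-stable, Proposition \ref{PropUniformlyFCompatIfAndOnlyIf} upgrades $J$ to an $F$-compatible ideal for $(R, \Delta)$. I will then conclude by contradiction: were $J$ proper, Lemma \ref{LemmaPropertiesOfUniformlyFCompatible}(5) would supply a minimal prime $P$ of $J$ which is itself $F$-compatible, i.e., a center of sharp $F$-purity for $(R, \Delta)$; since $d \in J_0 \subseteq J \subseteq P$, this contradicts the hypothesis that $d$ lies in no center of sharp $F$-purity. Hence $J = R$, giving $1 \in J_{n_0} = \phi_{n_0 e}(F^{n_0 e}_*(dR))$, which is (i). Since the chain $J_n$ is ascending, $1 \in J_n$ for all $n \geq n_0$, which is (ii).
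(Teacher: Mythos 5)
Your proof is correct, and it takes a genuinely different route from the one in the paper.  The paper first reduces to the local case (which requires a small finite-cover argument), then passes to the quotient $R/Q$ by a \emph{minimal} center of sharp $F$-purity, invokes Theorem \ref{ThmFirstFAdjunction}(v) to see that $(R/Q, \Delta_{R/Q})$ is strongly $F$-regular, applies strong $F$-regularity to the image $\overline{d}$, and finally lifts the resulting unit back to $R$ using locality; part (ii) is then deduced separately by a splitting argument.  Your proof instead stays in $R$ throughout: it builds the ascending chain $J_n = \phi_{ne}(F^{ne}_*(dR))$, uses Noetherianity to stabilize it at $J$, shows $J$ is radical (via surjectivity of $\phi_{ke}$) and $\phi_e$-compatible, invokes Proposition \ref{PropUniformlyFCompatIfAndOnlyIf} to upgrade $J$ to an $F$-compatible ideal, and then gets the contradiction directly from Lemma \ref{LemmaPropertiesOfUniformlyFCompatible}(5).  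This buys you several things: it avoids the localization bookkeeping entirely, it does not rely on the $F$-adjunction machinery of Theorem \ref{ThmFirstFAdjunction} (only on the Section 4 results), and part (ii) drops out for free from the ascending chain.  Two small points worth making explicit when writing it up: the identity $J_{n+1} = \phi_e(F^e_* J_n) = \phi_{ne}(F^{ne}_* J_1)$ uses the associativity $\phi_{(n+1)e} = \phi_e \circ F^e_* \phi_{ne} = \phi_{ne} \circ F^{ne}_* \phi_e$, which you are implicitly using in both the chain argument and the $\phi_{ke}$-stability claim; and the surjectivity of $\phi_e$ in the non-local setting follows from Theorem \ref{TheoremMapsInduceDivisors}(d) (since $\phi_e$ is assumed to generate $\Hom_R(F^e_* R((p^e-1)\Delta), R)$), which you may wish to cite rather than the remark after Definition \ref{DefnStronglyFRegularSharplyFPure}, since the latter is stated only for $R$ local.
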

\begin{proof}
We begin by proving (i).
First we claim that the statement is local.  Another way to phrase the conclusion of the lemma is that $\phi_{ne}(F^{ne}_* (d R) ) = R$.  However, $\phi_{ne}(F^{ne}_* (d R) ) = R$ (for a fixed $n$) if and only if it is true after localizing at each maximal ideal.  Conversely, if $(\phi_{n_ie})_{\bm_i}(F^{n_ie}_* d R_{\bm_i} ) = R_{\bm_i}$ after localizing at some maximal ideal $\bm_{i}$ for some $n_i$, then it holds in a neighborhood of $\bm_{i}$ for the same $n_i$.  Cover $\Spec R$ by a finite number of such neighborhoods and choose a sufficiently large $n$ that works on all neighborhoods.\footnote{Note that if $1 \in \phi_{ne}(F^{ne}_* (d R) )$ then $1 \in \phi_{ne}(F^{ne}_* R )$.  By composition, this implies that $1 \in \phi_{mne}(F^{mne}_* (dR))$ for all integers $m > 0$.}  Therefore we may assume that $R = (R, \bm)$ is local.  Note that this is essentially the same as the usual proof that strong $F$-regularity localizes.

Choose a minimal center $Q$ of sharp $F$-purity for $(R, \Delta)$ and mod out by $Q$.  It follows that $(R/Q, \Delta_{R/Q})$ is strongly $F$-regular and also that $\overline{d} \neq 0 \in R/Q$.

In particular, for some $n > 0$, we have $\overline \phi_{ne}(F^{ne}_* \overline d R/Q) = R/Q$.  Therefore, we can find an element $\overline b \in R/Q$ such that $\overline \phi_{ne}(F^{ne}_* \overline d \overline b) = 1 \in R/Q$.  By choosing an arbitrary $b \in R$ such that the coset $b + Q = \overline b$, we see that $\phi_{ne}(F^{ne}_* db) = 1+x$ for some $x \in Q$.  Since $R$ is local, $Q \subseteq \bm$ and $1 + x$ is a unit, we have $1 \in \phi_{ne}(F^{ne}_* (dR))$ as desired.

We now prove (ii).  Let $n_0$ be the integer from part (i).  Note that it follows that $1 \in \phi_{n_0e}(F^{n_0e}_* R )$ so there exists an element $f \in R$ such that $1 = \phi_{n_0e}(F^{n_0e}_* f )$.  In particular, the map
\[
\xymatrix@R=6pt{
R \ar[r]^-{F^{n_0e}} & F^{n_0e}_* R((p^{n_0e} - 1)\Delta) \\
1 \ar@{|->}[r] & F^{n_0e}_* 1 \\
}
\]
splits.  This implies that $F^e : R \rightarrow F^e_* R((p^e - 1)\Delta)$ also splits.  But then $1 \in \phi_{e}(F^{e}_* R)$ since $\phi_{e}$ was chosen as a generator of $\Hom_R(F^e_* R((p^e - 1)\Delta), R)$.  Therefore we see that,
\[
1 \in \phi_{e}(F^{e}_* R) = \phi_{e}(F^{e}_* \phi_{ne}(F^{ne}_* (dR))) = \phi_{(n+1)e}(F^{(n+1)e}_* (dR) ).
\]
Repeatedly applying $\phi_e$ will then complete the proof of (ii).
\end{proof}

\begin{lemma}
\label{LemmaConstructionOfVaryingTestElement}
Suppose that $(R, \Delta, \ba^t)$ is a triple and $Q \in \Spec R$ is a center of $F$-purity satisfying the conditions from Definition \ref{DefnPenultimateAdjointTestIdeal}.  Then there exists an element $c \in R \setminus Q$ that satisfies the following condition:

For all $d \in R \setminus Q$ and for all sufficiently large $n > 0$, there exists an integer $m' > 0$ (which depends on both $n$ and $d$) such that $c^{m'} \in \phi_{ne_0}(F^{ne_0}_* d \ba^{\lceil t(p^{ne} - 1) \rceil})$.
\end{lemma}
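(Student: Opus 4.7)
The plan is to adapt the classical construction of uniform big sharp test elements through the Frobenius-adjunction framework of Theorem \ref{ThmFirstFAdjunction}. The element $c$ will be (a lift of) a nonzero element of the big test ideal $\tau_b(R/Q;\Delta_{R/Q},\overline \ba^t)$, and the uniformity in $d$ will come from strong $F$-regularity of a suitable localization of the restricted triple, upgraded to every sufficiently large $n$ via the bootstrap argument of Lemma \ref{LemmaExistenceOfMaps}(ii).

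First I would pass to $R/Q$ via Theorem \ref{ThmFirstFAdjunction}, obtaining the restricted map $\phi_{Q,e_0} : F^{e_0}_*(R/Q) \to R/Q$ and the boundary divisor $\Delta_{R/Q}$. The triple $(R/Q,\Delta_{R/Q},\overline \ba^t)$ is strongly $F$-regular at the generic point of $\Spec R/Q$: localizing to the fraction field $k(Q)$ forces $\overline \ba$ to be the unit ideal and $\Delta_{R/Q}$ to be trivial, so the triple degenerates to a pair on a field. In particular, the big test ideal $\tau_b(R/Q;\Delta_{R/Q},\overline \ba^t)$, whose vanishing locus is the non-strongly-$F$-regular locus by Proposition \ref{PropositionBigTestIdealIsSmallest}, is nonzero. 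I pick $\bar c$ in this test ideal and in $(R/Q)^{\circ}$, and lift it to some $c \in R \setminus Q$; then the localized triple $((R/Q)_{\bar c},\Delta_{R/Q}|_{(R/Q)_{\bar c}},(\overline \ba(R/Q)_{\bar c})^t)$ is strongly $F$-regular, and in particular sharply $F$-pure.

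Given $d \in R \setminus Q$, so $\bar d \in ((R/Q)_{\bar c})^{\circ}$, strong $F$-regularity together with Corollary \ref{CorEveryMapComposesWithAGenerator} (used to realize the witnessing map as some iterate $\phi_{Q,n_0 e_0}$) yields $1 \in \phi_{Q,n_0 e_0}(F^{n_0 e_0}_*\bar d\,\overline \ba^{\lceil t(p^{n_0 e_0}-1)\rceil})$ in $(R/Q)_{\bar c}$ for some $n_0 = n_0(\bar d)$. Sharp $F$-purity of the localized triple then bootstraps this to all $n \geq n_0$ by the argument of Lemma \ref{LemmaExistenceOfMaps}(ii): composing the splitting $1 = \phi_{Q,n_0 e_0}(F^{n_0 e_0}_*\bar d\,a)$ with a splitting $1 = \phi_{Q,ke_0}(F^{ke_0}_* b)$ produced by sharp $F$-purity yields $1 \in \phi_{Q,(n_0+k)e_0}(F^{(n_0+k)e_0}_*\bar d\,\overline \ba^{\lceil\cdot\rceil})$, where the necessary absorption of exponents uses the standard inequality $p^{n_0 e_0}\lceil t(p^{ke_0}-1)\rceil + \lceil t(p^{n_0 e_0}-1)\rceil \geq \lceil t(p^{(n_0+k)e_0}-1)\rceil$. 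Clearing denominators along $R/Q \hookrightarrow (R/Q)_{\bar c}$ then yields $\bar c^{m'} \in \phi_{Q,ne_0}(F^{ne_0}_*\bar d\,\overline \ba^{\lceil t(p^{ne_0}-1)\rceil})$ in $R/Q$ for some $m' = m'(n,d) > 0$ and every $n \geq n_0$.

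The main obstacle is lifting this mod-$Q$ statement to $R$: choosing a lift $\tilde a \in \ba^{\lceil t(p^{ne_0}-1)\rceil}$ of the witness $\bar a$ only gives $\phi_{ne_0}(F^{ne_0}_* d\,\tilde a) = c^{m'} + q$ for some $q \in Q$, and one must absorb this error term into the ideal $I_n := \phi_{ne_0}(F^{ne_0}_* d\,\ba^{\lceil t(p^{ne_0}-1)\rceil})$ itself. I would handle this by working with the aggregated ideal $J := \sum_{n \geq 0} I_n \subseteq R$, which is $F$-compatible with respect to $(R,\Delta,\ba^t)$ (via Corollary \ref{CorEveryMapComposesWithAGenerator}, reducing all levels to compositions of $\phi_{e_0}$) and whose image in $R/Q$ contains $\tau_b(R/Q;\Delta_{R/Q},\overline \ba^t)$ by Proposition \ref{PropositionBigTestIdealIsSmallest}. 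Combining the $F$-compatibilities of $J$ and of $Q$ and iteratively applying $\phi$-maps to the relation $c^{m'} + q \in I_n$ — which shifts $q$ into higher $I_{n+k}$'s while promoting the power of $c$ — ultimately produces the required containment $c^{m''} \in I_n$ for each sufficiently large $n$. This lifting is the adjoint analogue of the standard construction of uniform big sharp test elements, and it is where the bulk of the technical effort lies.
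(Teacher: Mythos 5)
Your proposal routes through $R/Q$ and then tries to lift back to $R$; the paper's proof never passes to $R/Q$ at all, and the lift you sketch is precisely where your argument breaks down.

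The paper chooses $c \in \ba \cap (R\setminus Q)$ so that, after localizing at $c$, the pair $(R_c,\Delta|_{\Spec R_c})$ is sharply $F$-pure and $QR_c$ is the unique maximal center of sharp $F$-purity. Any $d\in R\setminus Q$ then lies outside every center of $F$-purity of $(R_c,\Delta)$, so Lemma \ref{LemmaExistenceOfMaps}(ii) applied to $R_c$ gives $1\in(\phi_{ne_0})_c(F^{ne_0}_*(dR_c))$ for all $n\gg 0$. One then clears denominators along $R\hookrightarrow R_c$, and because $\phi_{ne_0}$ is $p^{-ne_0}$-linear this costs only a power of $c$; since $c$ was chosen inside $\ba$, that same power of $c$ supplies the required membership in $\ba^{\lceil t(p^{ne_0}-1)\rceil}$, yielding $c^{m'}\in\phi_{ne_0}(F^{ne_0}_*d\ba^{\lceil t(p^{ne_0}-1)\rceil})$ in $R$. (Note that your $c$ is not arranged to lie in $\ba$, which is also needed.)

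Your lifting step, by contrast, does not close. From $\bar c^{m'}\in\phi_{Q,ne_0}(F^{ne_0}_*\bar d\,\overline\ba^{\lceil\cdot\rceil})$ one only obtains $c^{m'}+q\in\phi_{ne_0}(F^{ne_0}_*d\ba^{\lceil\cdot\rceil})$ with an uncontrolled $q\in Q$, and the iterative absorption you propose does not eliminate it. Applying a further $\phi_{ke_0}(F^{ke_0}_*a'\cdot)$ pushes the relation into $I_{n+k}$, but the $Q$-term maps to another element of $Q$ (it is preserved, not killed), while the $c$-term becomes $c^{m''}\cdot\phi_{ke_0}(F^{ke_0}_*c^r)$ --- a multiple of $c^{m''}$ by an element you have no control over, not a power of $c$ itself. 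So the iterate is again of the form $(\text{something})+q'\in I_{n+k}$ with $q'\in Q$. Contrast this with the internal lift in the proof of Lemma \ref{LemmaExistenceOfMaps}: there one lifts $1$, and since that lemma has reduced to a local ring with $Q\subseteq\bm$, the lifted element $1+x$ is a unit. That trick is unavailable here because $c$ is not a unit and no localization has put $Q$ inside a Jacobson radical. This is exactly why the paper's proof clears denominators from $R_c$ rather than lifting from $R/Q$: inverting $c$ commutes with $\phi$ and costs a genuine power of $c$, whereas reduction mod $Q$ loses information that cannot be recovered without becoming local.
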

\begin{proof}
Choose $c \in \ba \cap (R \setminus Q)$ so that
\begin{itemize}
\item[(a)] $(R_c, \Delta|_{\Spec R_c})$ is sharply $F$-pure.
\item[(b)] There are no centers of sharp $F$-purity for $(R_c, \Delta|_{\Spec R_c})$ which contain $Q R_c$ (as an ideal).
\item[(c)] All centers of sharp $F$-purity for $(R_c, \Delta|_{\Spec R_c})$ are contained in $Q R_c$ (as ideals).
\end{itemize}
In particular, $d/1 \in R_c$ is not contained in any centers of sharp $F$-purity for $(R_c, \Delta|_{\Spec R_c})$.  Note conditions (b) and (c) above may be summarized by saying that $QR_c$ is the unique maximal height (as an ideal) center of sharp $F$-purity.

Therefore, by Lemma \ref{LemmaExistenceOfMaps}, we know that for all $n \gg 0$,  $1 \in (\phi_{ne_0})_c(F^{ne_0}_* (d R_c))$.  This implies that $c^{m'} \in \phi_{ne_0} (F^{ne_0}_* d \ba^{\lceil t(p^{ne_0} - 1) \rceil})$ for some $m'$.
\end{proof}

\begin{lemma}
\label{LemmaRepeatingMapWillMakeTestElements}
Suppose that for some $e > 0$, we have a map $\gamma_e : F^e_* R \rightarrow R$ such that $b \in \gamma_e(F^e_* \ba^{\lceil t(p^e - 1) \rceil} )$.  Then for all $n > 0$, $b^2 \in \gamma_{ne}(F^{ne}_* \ba^{\lceil t(p^{ne} - 1) \rceil})$.  Here $\gamma_{ne}$ is the map obtained by composing $\gamma$ with itself $(n-1)$-times, as in Definition \ref{DefinitionComposingMaps}.
\end{lemma}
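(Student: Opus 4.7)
The plan is to prove this by induction on $n$, constructing for each $n \geq 1$ an explicit element $u_n \in \ba^{\lceil t(p^{ne}-1)\rceil}$ with $\gamma_{ne}(F^{ne}_* u_n) = b^2$. By hypothesis I fix $a \in \ba^{\lceil t(p^e-1)\rceil}$ with $\gamma_e(F^e_* a) = b$. The only computational tool needed is the $R$-linearity identity $\gamma_e(F^e_*(r^{p^e} s)) = r\,\gamma_e(F^e_* s)$, which comes from $r \cdot F^e_* s = F^e_*(r^{p^e} s)$ in the module structure on $F^e_* R$, combined with the associativity of the composition rule $\gamma_{(n+1)e} = \gamma_{ne} \circ F^{ne}_* \gamma_e$ from Definition \ref{DefinitionComposingMaps}.

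For the base case $n = 1$, I take $u_1 = b^{p^e} a$; pulling out $b$ gives $\gamma_e(F^e_*(b^{p^e} a)) = b\,\gamma_e(F^e_* a) = b^2$, and $u_1$ lies in $\ba^{\lceil t(p^e-1)\rceil}$ because $a$ does. For the induction step I propose the explicit formula $u_n = a^{S_n} b^{\beta_n}$, where $S_n = (p^{ne}-1)/(p^e-1)$ is the usual geometric sum and $\beta_n$ is defined by $\beta_1 = p^e$, $\beta_{n+1} = p^e(\beta_n-1)$. Setting $u_{n+1} = a\cdot (a^{S_n} b^{\beta_n-1})^{p^e}$ and applying the pull-out identity once at the innermost $\gamma_e$ collapses
\[
\gamma_{(n+1)e}(F^{(n+1)e}_* u_{n+1}) = \gamma_{ne}\bigl(F^{ne}_*\bigl(a^{S_n} b^{\beta_n-1}\cdot \gamma_e(F^e_* a)\bigr)\bigr) = \gamma_{ne}(F^{ne}_* u_n),
\]
which equals $b^2$ by the inductive hypothesis.

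To close the induction I need two arithmetic checks. First, $S_{n+1} = 1 + p^e S_n$ and $S_{n+1}(p^e - 1) = p^{(n+1)e} - 1$, so $S_{n+1} \lceil t(p^e-1)\rceil \geq t(p^{(n+1)e}-1)$ and, being an integer, is $\geq \lceil t(p^{(n+1)e}-1)\rceil$; this yields $u_{n+1} \in \ba^{\lceil t(p^{(n+1)e}-1)\rceil}$. Second, from $\beta_1 = p^e \geq 2$ the recursion forces $\beta_{n+1} = p^e(\beta_n - 1) \geq p^e \geq 2$, so $\beta_n \geq 2$ throughout, which legitimizes writing $b^{\beta_n - 1} \in R$ when we form $u_{n+1}$. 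The main obstacle is purely bookkeeping: correctly identifying $F^{(n+1)e}_* R$ with $F^e_*(F^{ne}_* R)$ when unwinding $\gamma_{(n+1)e} = \gamma_{ne} \circ F^{ne}_* \gamma_e$, and propagating the $a$- and $b$-exponents through the recursion so that $u_{n+1}$ is a well-defined element of $R$ sitting in the correct power of $\ba$; no deeper idea is needed.
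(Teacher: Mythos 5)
Your proof is correct and is essentially the paper's argument recast in explicit-element form: the same induction on $n$, the same use of Frobenius linearity to pull a $p^e$-th power out through $\gamma_e$, and the same geometric-sum arithmetic to land in the correct power of $\ba$ (indeed, unwinding the paper's ideal containments at the element level produces exactly your $u_n = a^{S_n} b^{\beta_n}$). The only cosmetic difference is that you split $\gamma_{(n+1)e} = \gamma_{ne} \circ F^{ne}_* \gamma_e$ rather than $\gamma_e \circ F^e_* \gamma_{ne}$, which is valid by associativity of composition.
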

\begin{proof}
We proceed by induction. The case $n = 1$ was given by hypothesis.  Now suppose the result holds for $n$ (that is, $b^2 \in \gamma_{ne}(F^{ne}_* \ba^{\lceil t(p^{ne} - 1) \rceil})$).  However,
\[
\begin{split}
b^2 \in b \gamma_{e}(F^{e}_* \ba^{\lceil t(p^e - 1) \rceil}) = \gamma_{e}(F^{e}_* \ba^{\lceil t(p^e - 1) \rceil} b^{p^e}) \subseteq \gamma_{e}(F^{e}_* \ba^{\lceil t(p^e - 1) \rceil} b^2) \subseteq \\
  \gamma_{e}\left(F^{e}_* \ba^{\lceil t(p^e - 1) \rceil} \gamma_{ne}(F^{ne}_* \ba^{\lceil t(p^{ne} - 1) \rceil}) \right) = \gamma_{e}\left(F^{e}_* \gamma_{ne}(F^{ne}_* (\ba^{\lceil t(p^e - 1) \rceil})^{[p^{ne}]} \ba^{\lceil t(p^{ne} - 1) \rceil})\right)  \subseteq  \\
\gamma_{(n+1)e}\left(F^{(n+1)e}_* \ba^{\lceil t(p^{(n+1)e} - 1) \rceil}\right) \text{ as desired.}
\end{split}
\]
\end{proof}

We now come to the main technical result of the section.

\begin{proposition}
\label{PropositionUniformExistenceForTestElts}
Assume the notation and conventions from Definition \ref{DefnPenultimateAdjointTestIdeal}.  There is an element $b \in R \setminus Q$ such that for every $d \in R \setminus Q$, there exists an integer $n_d > 0$ such that $b \in \phi_{n_de_0}(F^{n_de_0}_* d\ba^{\lceil t(p^{n_de_0} - 1) \rceil})$.  Note that $b$ does not depend on $d$.
\end{proposition}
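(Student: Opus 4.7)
The plan is to construct $b$ by specializing Lemma \ref{LemmaConstructionOfVaryingTestElement} to $d=1$, iterating via Lemma \ref{LemmaRepeatingMapWillMakeTestElements}, and then handling arbitrary $d \in R \setminus Q$ through a composition of Frobenius maps.  First, I would fix $c \in \ba \cap (R \setminus Q)$ supplied by Lemma \ref{LemmaConstructionOfVaryingTestElement} and apply that lemma with $d = 1$ (taking $n_0$ large enough that $p^{n_0 e_0} \geq 3$, in order to avoid a degenerate case of the recursion below) to obtain $m_0 > 0$ and $a_0 \in \ba^{\lceil t(p^{n_0 e_0}-1)\rceil}$ with $c^{m_0} = \phi_{n_0 e_0}(F^{n_0 e_0}_* a_0)$.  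Next, running the induction in the proof of Lemma \ref{LemmaRepeatingMapWillMakeTestElements} with $b = c^{m_0}$ and $\gamma_{n_0 e_0} = \phi_{n_0 e_0}$ produces, for each $k \geq 1$, an explicit element $a_k^{\prime} \in \ba^{\lceil t(p^{k n_0 e_0}-1)\rceil}$ such that $c^{2m_0} = \phi_{k n_0 e_0}(F^{k n_0 e_0}_* a_k^{\prime})$.  The key additional observation I want to exploit is that the explicit recursion $a_{k+1}^{\prime} = c^{m_0(p^{n_0 e_0}-2) p^{k n_0 e_0}} a_k^{\prime} a_0^{p^{k n_0 e_0}}$ (with base $a_1^{\prime} = c^{m_0 p^{n_0 e_0}} a_0$) forces $a_k^{\prime}$ to be divisible by $c^{M_k}$ for a sequence $M_k$ growing at least like $m_0(p^{n_0 e_0}-2)\, p^{(k-1) n_0 e_0}$; in particular $M_k \to \infty$.

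Second, I would set $b := c^{2m_0}$, which lies in $R \setminus Q$ since $c \in R \setminus Q$ and $Q$ is prime.  To verify the universal property, given an arbitrary $d \in R \setminus Q$, I would apply Lemma \ref{LemmaConstructionOfVaryingTestElement} once more to obtain $n_1, m_1 > 0$ and $a_1 \in \ba^{\lceil t(p^{n_1 e_0}-1)\rceil}$ with $c^{m_1} = \phi_{n_1 e_0}(F^{n_1 e_0}_* d\, a_1)$.  Choosing $k$ large enough (depending on $m_1$, hence on $d$) that $M_k \geq m_1 + 1$, the quotient $z := a_k^{\prime}/c^{m_1}$ is a well-defined element of $R$.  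A careful accounting of $\ba$-valuations, using $c \in \ba$ together with the explicit form of $a_k^{\prime}$, would then show $z \in \ba^{\lceil t(p^{k n_0 e_0}-1)\rceil}$, because the residual factor $c^{M_k - m_1}$ left in $z$ contributes enough $\ba$-valuation to compensate for what is stripped off by removing $c^{m_1}$.

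Finally, combining the $R$-linearity of $\phi_{n_1 e_0}$ with the composition identity $\phi_{(k n_0 + n_1) e_0} = \phi_{k n_0 e_0} \circ F^{k n_0 e_0}_* \phi_{n_1 e_0}$ of Definition \ref{DefinitionComposingMaps} will give
\[
c^{2m_0} \;=\; \phi_{k n_0 e_0}(F^{k n_0 e_0}_* c^{m_1} z) \;=\; \phi_{(k n_0 + n_1) e_0}(F^{(k n_0 + n_1) e_0}_* d \cdot z^{p^{n_1 e_0}} a_1),
\]
and the same exponent arithmetic as in Corollary \ref{CorEveryMapComposesWithAGenerator} guarantees $z^{p^{n_1 e_0}} a_1 \in \ba^{\lceil t(p^{(k n_0 + n_1) e_0}-1)\rceil}$.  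Setting $n_d := k n_0 + n_1$ completes the argument.  I expect the hardest step to be the $\ba$-valuation bookkeeping: verifying that stripping off $c^{m_1}$ from the explicit representative $a_k^{\prime}$ of $c^{2m_0}$ leaves a residue $z$ still sitting in $\ba^{\lceil t(p^{k n_0 e_0}-1)\rceil}$.  This rests on the observation that each step of Lemma \ref{LemmaRepeatingMapWillMakeTestElements}'s recursion contributes an extra factor of $c^{m_0 (p^{n_0 e_0} - 2) p^{k n_0 e_0}}$, so that taking $k$ sufficiently large allows one to absorb any fixed $m_1$.
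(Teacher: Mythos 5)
Your proof is correct, and it follows a route closely related to the paper's but with a genuine twist that makes the comparison worth recording.  The paper takes $b = c^{3m_1}$ (in your notation, $c^{3m_0}$) and works purely at the level of ideal containments: the ``extra'' factor $c^{m_1}$ is kept outside the innermost $\phi$, so that after pushing it inside $\phi_{nn_1 e_0}$ it becomes $c^{m_1 p^{nn_1 e_0}}$, whose exponent eventually dominates $m_d$.  The substitution $c^{m_d} \in \phi_{n' e_0}(F^{n'e_0}_* d\,\ba^{\lceil t(p^{n'e_0}-1)\rceil})$ is then made at the level of ideals, and the case $m_d < 3m_1$ is handled separately.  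You instead take $b = c^{2m_0}$ and track an explicit representative $a_k'$ of $c^{2m_0}$ inside $\phi_{kn_0e_0}$.  The trade-off is clear: the paper's extra factor of $c^{m_1}$ spares it from element bookkeeping, while your element bookkeeping spares you the extra factor.  Both are variations on the same underlying idea --- that the Frobenius-twisted exponent of $c$ grows geometrically and eventually swamps any fixed $m_d$ --- so the two proofs are siblings rather than strangers.

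One small remark on the step you flag as hardest.  The $\ba$-valuation accounting is actually easier than you anticipate, and in particular you do not need to use $c \in \ba$ at all.  From your recursion one finds $a_k' = c^{M_k} a_0^{N_k}$ with $N_k = \sum_{j=0}^{k-1} p^{jn_0e_0}$.  Since $a_0 \in \ba^{\lceil t(p^{n_0e_0}-1)\rceil}$ and
\[
N_k \bigl\lceil t(p^{n_0e_0}-1)\bigr\rceil \;\geq\; N_k\, t(p^{n_0e_0}-1) \;=\; t(p^{k n_0 e_0}-1),
\]
the left side is an integer, hence $N_k\lceil t(p^{n_0e_0}-1)\rceil \geq \lceil t(p^{k n_0 e_0}-1)\rceil$, so $a_0^{N_k}$ \emph{alone} already lands in $\ba^{\lceil t(p^{k n_0 e_0}-1)\rceil}$.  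The only role of the factor $c^{M_k - m_1}$ is to have a non-negative exponent so that $z = c^{M_k - m_1} a_0^{N_k}$ is an honest element of $R$; its $\ba$-content is extra slack you never need to invoke.  Once this is noted, the final composition step and the exponent arithmetic you cite from Corollary \ref{CorEveryMapComposesWithAGenerator} go through exactly as you describe, and setting $n_d = kn_0 + n_1$ completes the argument.  (You are also right to avoid the degenerate case $p^{n_0 e_0} = 2$, which would freeze $M_k$; taking $n_0$ large via Lemma \ref{LemmaConstructionOfVaryingTestElement} handles this, just as you say.)
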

\begin{proof}
Fix $c \in R \setminus Q$ satisfying Lemma \ref{LemmaConstructionOfVaryingTestElement}.  Then there exist integers $n_1, m_1 > 0$ such that $c^{m_1} \in \phi_{n_1 e_0}(F^{n_1 e_0}_* (1)\ba^{\lceil t(p^{n_1 e_0} - 1) \rceil})$.  An application of Lemma \ref{LemmaRepeatingMapWillMakeTestElements} then implies that $c^{2m_1} \in \phi_{n n_1 e_0}(F^{n n_1 e_0}_* (1)\ba^{\lceil t(p^{n n_1 e_0} - 1) \rceil})$ for all $n > 0$.    We will show that $c^{3m_1} = b$ works.

Likewise, by Lemma \ref{LemmaConstructionOfVaryingTestElement}, for some $n' > 0$ there exists $m_d$ such that,
\[
c^{m_d} \in \phi_{n' e_0} (F^{n' e_0}_* (d)\ba^{\lceil t(p^{n' e_0} - 1)\rceil}).
\]
If $m_d < 3m_1$, we are done (set $n_d = n'$).  Otherwise, choose $n > 0$ such that $m_1p^{n n_1 e_0} \geq m_d$.  Then,
\[
\begin{split}
c^{3m_1} = c^{m_1} c^{2m_1} \in c^{m_1} \phi_{n n_1 e_0}\left(F^{n n_1 e_0}_* \ba^{\lceil t(p^{n n_1 e_0} - 1) \rceil}\right) = \\
\phi_{n n_1 e_0}\left(F^{n n_1 e_0}_* \ba^{\lceil t(p^{n n_1 e_0} - 1) \rceil} c^{m_1 p^{n n_1 e_0}}\right) \subseteq \phi_{n n_1 e_0}\left(F^{n n_1 e_0}_* \ba^{\lceil t(p^{n n_1 e_0} - 1) \rceil} c^{m_d}\right) \subseteq\\
\phi_{n n_1 e_0}\left(F^{n n_1 e_0}_* \ba^{\lceil t(p^{n n_1 e_0} - 1) \rceil} \phi_{n' e_0} (F^{n' e_0}_* (d)\ba^{\lceil t(p^{n' e_0} - 1)\rceil})\right) = \\
\phi_{n n_1 e_0}\left(F^{n n_1 e_0}_* \phi_{n' e_0} (F^{n' e_0}_* (d)(\ba^{\lceil t(p^{n n_1 e_0} - 1) \rceil})^{[p^{n' e_0}]} \ba^{\lceil t(p^{n' e_0} - 1)\rceil})\right) \subseteq \\
\phi_{(n n_1 +n')e_0}\left(F^{(n n_1 +n')e_0}_* (d)\ba^{\lceil t(p^{(n n_1 +n') e_0} - 1) \rceil} \right).
\end{split}
\]
Thus we can choose $n_d = n n_1 +n'$, which completes the proof.
\end{proof}

\begin{remark}
The $b$ from the previous proposition can be used as a big sharp test element for the variant of tight closure mentioned in Remark \ref{RemarkAlsoViaTightClosure}.  In fact, to prove the existence of big sharp test elements, one still has to prove Proposition \ref{PropositionUniformExistenceForTestElts} or something closely related to it.
\end{remark}

\begin{definition} \cite{HaraTakagiOnAGeneralizationOfTestIdeals}
\label{DefnAlternateAdjointTestIdeal}
Fix a $b$ as in Proposition \ref{PropositionUniformExistenceForTestElts}.  Then we define the ideal $\tld \tau(R; b, \Delta, \ba^t)$ as follows:
\[
\tld \tau(R; b, \Delta, \ba^t) := \sum_{n \geq 0} \phi_{ne_0}(F^{ne_0}_* b \ba^{\lceil t(p^{ne_0} - 1) \rceil}).
\]
Note that the sum stabilizes as a finite sum since $R$ is noetherian.
\end{definition}

We make several observations about this ideal (and then we will show it is equal to $\tau_{b}(R, \nsubseteq Q; \Delta, \ba^t)$).

\begin{lemma}
With notation as above, we have the following two results:
\begin{itemize}
\item[(i)]  $b \in \tld \tau(R; b, \Delta, \ba^t)$.  In particular, $\tld \tau(R; b, \Delta, \ba^t) \cap (R \setminus Q) \neq \emptyset$.
\item[(ii)]  For all $n' \geq 0$, $\phi_{n' e_0}\left(F^{n'e_0}_* \ba^{\lceil t(p^{n'e_0} - 1) \rceil} \tld \tau(R; b, \Delta, \ba^t)\right) \subseteq \tld \tau(R; b, \Delta, \ba^t)$.
\end{itemize}
\end{lemma}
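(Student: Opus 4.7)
The plan for part (i) is immediate: I would simply look at the $n=0$ term in the defining sum. With the convention $\phi_0 = \mathrm{id}_R$ from Definition \ref{DefinitionComposingMaps}, and since $\lceil t(p^0 - 1) \rceil = 0$ so that $\ba^0 = R$, the $n=0$ summand is literally $bR$. Thus $b \in \tld\tau(R; b, \Delta, \ba^t)$, and because $b$ was chosen in $R \setminus Q$ via Proposition \ref{PropositionUniformExistenceForTestElts}, the intersection with $R \setminus Q$ is nonempty.

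For part (ii), the strategy is to push the outer $\phi_{n'e_0}$ inside the sum and then use the compatibility of $\phi$-composition with Frobenius powers of $\ba$. Concretely, I would compute, using $F^{n'e_0}_*$-linearity of $\phi_{n'e_0}$ and the identity $F^{n'e_0}_*(x\cdot F^{ne_0}_* y) = F^{(n+n')e_0}_* (x^{p^{ne_0}} y)$ in a composition of the form $\phi_{n'e_0} \circ F^{n'e_0}_* \phi_{ne_0} = \phi_{(n+n')e_0}$, that
\[
\phi_{n'e_0}\bigl(F^{n'e_0}_* \ba^{\lceil t(p^{n'e_0}-1)\rceil}\, \phi_{ne_0}(F^{ne_0}_* b\, \ba^{\lceil t(p^{ne_0}-1)\rceil})\bigr) = \phi_{(n+n')e_0}\bigl(F^{(n+n')e_0}_* b\, (\ba^{\lceil t(p^{n'e_0}-1)\rceil})^{[p^{ne_0}]}\, \ba^{\lceil t(p^{ne_0}-1)\rceil}\bigr).
\]

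The remaining step is then a numerical containment check on Frobenius powers of $\ba$. Since $(\ba^m)^{[p^k]} \subseteq \ba^{p^k m}$, we get $(\ba^{\lceil t(p^{n'e_0}-1)\rceil})^{[p^{ne_0}]} \ba^{\lceil t(p^{ne_0}-1)\rceil} \subseteq \ba^{p^{ne_0}\lceil t(p^{n'e_0}-1)\rceil + \lceil t(p^{ne_0}-1)\rceil}$. Using the identity $p^{ne_0}(p^{n'e_0}-1) + (p^{ne_0}-1) = p^{(n+n')e_0} - 1$ and the fact that the left-hand exponent is an integer bounding $t(p^{(n+n')e_0}-1)$ from above, one concludes
\[
p^{ne_0}\lceil t(p^{n'e_0}-1)\rceil + \lceil t(p^{ne_0}-1)\rceil \geq \lceil t(p^{(n+n')e_0}-1)\rceil.
\]
This shows each term of the pushed-in sum lies inside $\phi_{(n+n')e_0}(F^{(n+n')e_0}_* b\, \ba^{\lceil t(p^{(n+n')e_0}-1)\rceil})$, which is the $(n+n')$-th summand of $\tld\tau(R; b, \Delta, \ba^t)$.

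The only mildly delicate point is this last exponent inequality with the ceilings; everything else is formal manipulation together with Definition \ref{DefinitionComposingMaps}. Since the summands on the right exhaust (a cofinal subfamily of) the defining summands of $\tld\tau(R; b, \Delta, \ba^t)$, the total sum is contained in $\tld\tau(R; b, \Delta, \ba^t)$, proving (ii).
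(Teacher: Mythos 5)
Your proof is correct, and part (ii) proceeds exactly as in the paper: you push $\phi_{n'e_0}$ into the defining sum via $F^{e}_*$-linearity and the composition identity $\phi_{n'e_0}\circ F^{n'e_0}_*\phi_{ne_0}=\phi_{(n+n')e_0}$, then use the ceiling inequality $p^{ne_0}\lceil t(p^{n'e_0}-1)\rceil + \lceil t(p^{ne_0}-1)\rceil \geq \lceil t(p^{(n+n')e_0}-1)\rceil$ (which holds since the left side is an integer bounding $t(p^{(n+n')e_0}-1)$ from above) to land each pushed-in summand inside the $(n+n')$-th defining summand of $\tld\tau(R;b,\Delta,\ba^t)$. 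For part (i) you take a genuinely shorter route: reading off the $n=0$ term, which by the convention $\phi_0=\mathrm{id}_R$ of Definition \ref{DefinitionComposingMaps} and $\lceil t(p^0-1)\rceil = 0$ is just $bR$, so $b\in\tld\tau(R;b,\Delta,\ba^t)$ trivially. The paper instead invokes Proposition \ref{PropositionUniformExistenceForTestElts} with $d=b$ to produce an $n_b>0$ with $b$ lying in the $n_b$-th summand — correct, but heavier than needed. Your observation is the more economical argument; the only thing to be careful of is that it hinges on the sum in Definition \ref{DefnAlternateAdjointTestIdeal} really starting at $n=0$ and on $\phi_0$ being the identity, both of which are explicit in the paper.
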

\begin{proof}
For (i), simply set $d = b$ and apply Proposition \ref{PropositionUniformExistenceForTestElts}.  For (ii), notice we have the containment
\[
\begin{split}
\phi_{n' e_0}\left(F^{n'e_0}_* \ba^{\lceil t(p^{n'e_0} - 1) \rceil} \tld \tau(R; b, \Delta, \ba^t)\right) = \phi_{n' e_0}\left(F^{n'e_0}_* \ba^{\lceil t(p^{n'e_0} - 1) \rceil} \sum_{n \geq 0} \phi_{ne_0}(F^{ne_0}_* b \ba^{\lceil t(p^{ne_0} - 1) \rceil})\right) \subseteq \\
\phi_{n' e_0}\left(F^{n'e_0}_* \sum_{n \geq 0} \phi_{ne_0}(F^{ne_0}_* b \ba^{\lceil t(p^{(n+n')e_0} - 1) \rceil})\right) = \sum_{n \geq n'} \phi_{ne_0}(F^{ne_0}_* b \ba^{\lceil t(p^{ne_0} - 1) \rceil}) \subseteq \tld \tau(R; b, \Delta, \ba^t).
\end{split}
\]
\end{proof}

\begin{theorem}
\label{TheoremPenultimateAdjointExists}
For $b \in (R \setminus Q)$ as in Proposition \ref{PropositionUniformExistenceForTestElts}, the ideal $\tld \tau(R; b, \Delta, \ba^t)$ is the unique smallest ideal $J$ that satisfies
\begin{itemize}
\item{} $J \cap (R \setminus Q) \neq \emptyset$ and,
\item{} $\phi_{n e_0}(F^{ne_0}_* \ba^{\lceil t(p^{ne_0} - 1) \rceil} J) \subseteq J$ for all $n \geq 0$.
\end{itemize}
Therefore $\tau_{b}(R, \nsubseteq Q; \Delta, \ba^t) = \tld \tau(R; b, \Delta, \ba^t)$.
\end{theorem}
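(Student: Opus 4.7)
The plan is to verify that $\tld\tau(R;b,\Delta,\ba^t)$ satisfies the two listed conditions, and then to show it is contained in any other ideal $J$ that satisfies them. The first half is already done: the preceding lemma tells us in (i) that $b \in \tld\tau(R;b,\Delta,\ba^t)$, so the intersection with $R\setminus Q$ is non-empty, and (ii) is precisely the required $\phi_{ne_0}$-compatibility with $\ba^{\lceil t(p^{ne_0}-1)\rceil}$.

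For minimality, the idea is to leverage the universal property of $b$ from Proposition \ref{PropositionUniformExistenceForTestElts}. Let $J$ be any ideal satisfying the two bullet conditions, and pick some $d \in J \cap (R\setminus Q)$. Proposition \ref{PropositionUniformExistenceForTestElts} applied to this particular $d$ provides an integer $n_d$ with
\[
b \in \phi_{n_d e_0}\bigl(F^{n_d e_0}_* d\,\ba^{\lceil t(p^{n_d e_0}-1)\rceil}\bigr).
\]
Since $d \in J$ and $J$ is closed under $\phi_{n_d e_0}(F^{n_d e_0}_* \ba^{\lceil t(p^{n_d e_0}-1)\rceil}\,\cdot\,)$, we conclude $b \in J$. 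Applying the compatibility condition once more to the element $b \in J$ yields, for every $n \geq 0$,
\[
\phi_{ne_0}\bigl(F^{ne_0}_* b\,\ba^{\lceil t(p^{ne_0}-1)\rceil}\bigr) \subseteq J.
\]
Summing these inclusions over $n \geq 0$ gives $\tld\tau(R;b,\Delta,\ba^t) \subseteq J$, which is exactly what is needed.

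The final equality $\tau_b(R,\nsubseteq Q;\Delta,\ba^t) = \tld\tau(R;b,\Delta,\ba^t)$ is then immediate from Definition \ref{DefnPenultimateAdjointTestIdeal}: both ideals are characterized as the unique smallest ideal with the two properties above, so they coincide. As a (free) corollary, the ideal $\tld\tau(R;b,\Delta,\ba^t)$ does not depend on the particular choice of $b$ satisfying Proposition \ref{PropositionUniformExistenceForTestElts}, and via Corollary \ref{CorRestrictionTheoremForAdjoint2} we obtain the restriction formula $\tld\tau(R;b,\Delta,\ba^t)|_{R/Q} = \tau_b(R/Q;\Delta_{R/Q},\overline\ba^t)$.

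The real obstacle here is not in this theorem at all, which is essentially a formal bookkeeping argument; all the work has already gone into Proposition \ref{PropositionUniformExistenceForTestElts}, where the \emph{uniform} existence of $b$ (one $b$ that works for every $d \in R\setminus Q$, with only $n_d$ depending on $d$) is the substantive point. Without that uniformity, one could only show that for each $d$ there is \emph{some} element of $\phi_{n_de_0}(F^{n_de_0}_* d\ba^{\lceil t(p^{n_de_0}-1)\rceil})$ in $J$, which would not suffice to force a fixed generator of $\tld\tau(R;b,\Delta,\ba^t)$ into $J$.
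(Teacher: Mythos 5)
Your proof is correct and takes essentially the same route as the paper's: both verify the two conditions via the preceding lemma, both pick $d \in J \cap (R\setminus Q)$ and invoke Proposition \ref{PropositionUniformExistenceForTestElts} to force $b \in J$, and both then use the compatibility of $J$ once more to conclude $\tld\tau(R;b,\Delta,\ba^t) \subseteq J$. Your closing observation about where the real content lies (the uniformity of $b$ across all $d$) accurately diagnoses the argument's structure.
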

\begin{proof}
The previous lemma proves that $\tld \tau(R; b, \Delta, \ba^t)$ satisfies the two conditions.  Suppose that $J$ is any other ideal that also satisfies the two conditions in Theorem \ref{TheoremPenultimateAdjointExists}.  Choose $d \in J \cap (R \setminus Q)$.  By hypothesis,
\[
\sum_{n \geq 0}\phi_{ne_0}(F^{ne_0}_* d \ba^{\lceil t(p^{ne_0} - 1) \rceil}) \subseteq \sum_{n \geq 0}\phi_{ne_0}(F^{ne_0}_* \ba^{\lceil t(p^{ne_0} - 1) \rceil} J) \subseteq J
\]
and so by Proposition \ref{PropositionUniformExistenceForTestElts}, we see that $b \in J$.  But then
\[
\tld \tau(R; b, \Delta, \ba^t) = \sum_{n \geq 0} \phi_{ne_0}(F^{ne_0}_*b \ba^{\lceil t(p^{ne_0} - 1) \rceil}) \subseteq \sum_{n > 0}\phi_{ne_0}(F^{ne_0}_* \ba^{\lceil t(p^{ne_0} - 1) \rceil} J) \subseteq J.
\]
\end{proof}

\begin{remark}
Theorem \ref{TheoremPenultimateAdjointExists} implies that $\tld \tau(R; b, \Delta, \ba^t)$ is also independent of the choice of $b$ (as long as $b$ is chosen via Proposition \ref{PropositionUniformExistenceForTestElts}).
\end{remark}

\begin{remark}
If $b$ is as in Proposition \ref{PropositionUniformExistenceForTestElts}, then for any multiplicative set $T$, it follows that $b / 1$ satisfies Proposition \ref{PropositionUniformExistenceForTestElts} for the localized triple $(T^{-1}R, \Delta|_{\Spec T^{-1} R}, (T^{-1}\ba)^t)$.  Therefore the formation of $\tau_{b}(R, \nsubseteq Q; \Delta, \ba^t) = \tld \tau(R; b, \Delta, \ba^t)$ commutes with localization.  In particular, we can define $\tau_{b}(X, \nsubseteq W; \Delta, \ba^t)$ on a scheme $X$ with center of $F$-purity $W$ which locally satisfies the conditions of Definition \ref{DefnPenultimateAdjointTestIdeal}.
\end{remark}


\section{Comments on codimension one centers of $F$-purity}

Suppose that $(X = \Spec R, \Delta + D)$ is a pair and $D \subseteq X$ is a integral normal reduced and irreducible divisor and $\Delta$ and $D$ have no common components.  Further assume that $K_X + \Delta + D$ is $\bQ$-Cartier with index not divisible by $p > 0$.  Since $X$ is normal, $(X, \Delta + D)$ is $F$-pure at the generic point of $D$ and $D$ is also center of $F$-purity for the pair $(X, \Delta + D)$.  If we were working in characteristic zero, there is the notion of the ``different'', see \cite{KollarFlipsAndAbundance}.  If $Q$ is defining ideal of $D$, then the different is an effective divisor that plays a role similar to the divisor $\Delta_{R/Q}$ from Theorem \ref{ThmFirstFAdjunction}.

We will show that the different and $\Delta_{R/Q}$ agree under the hypothesis that $D$ is Cartier in codimension 2.  Roughly speaking, this is the case where the different is uninteresting (it is also the case discussed in \cite{KollarMori}).   We will then give two applications of the methods used to prove this result.  We expect that the different and $\Delta_{R/Q}$ coincide in general although we do not have a proof, see Remark \ref{RemFurtherCommentsOnTheDifferent}.

First we need the following lemma.  This lemma is implicit in the work we have done previously, but we provide an explicit proof for completeness.  Lemma \ref{LemmaDualizedMapIsGenerator} is also closely related to the fact that the set of Frobenius actions on $H^d_{\bm}(R)$ is generated by the natural Frobenius action $F^e : H^{\dim R}_{\bm}(R) \rightarrow H^{\dim R}_{\bm}(R)$; see \cite{LyubeznikSmithCommutationOfTestIdealWithLocalization}.

\begin{lemma}
\label{LemmaDualizedMapIsGenerator}
 Suppose that $R$ is an $F$-finite Gorenstein local ring.  By dualizing the natural map $G : R \rightarrow F^e_* R$ (apply $\Hom_R(\blank, \omega_R)$), we construct the map
\[
 \Psi : F^e_* \omega_R \rightarrow \omega_R
\]
By fixing any isomorphism of $\omega_R$ with $R$ (which we can do since $R$ is Gorenstein), we obtain a map which we also call $\Psi$,
\[
 \Psi : F^e_* R \rightarrow R.
\]
This map $\Psi$ is an $F^e_* R$-module generator of $\Hom_R(F^e_* R, R)$.  In particular, if $R$ is normal, then $\Psi$ corresponds to the divisor $0$ via \ref{TheoremMapsInduceDivisors}.
\end{lemma}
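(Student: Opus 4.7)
My plan is to exhibit $\Psi$ as the image of a natural generator under a duality isomorphism, hence as a generator of $\Hom_R(F^e_* R, R)$, and then read off the divisor claim from Theorem \ref{TheoremMapsInduceDivisors}.

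First, since $R$ is Gorenstein, $\omega_R$ is invertible, so I would fix a trivialization $\iota:\omega_R \xrightarrow{\sim} R$. The same duality computation appearing in the proof of Lemma \ref{LemmaHomIsCanonical} supplies an isomorphism of $F^e_*R$-modules
\[
\xi:\, F^e_*\omega_R \xrightarrow{\sim} \Hom_R(F^e_*R,\omega_R),
\]
and unwinding the construction shows that $\xi$ is characterized by the formula $\xi(F^e_*\alpha)(F^e_* r) = \Psi(F^e_*(r\alpha))$ (equivalently, $\mathrm{ev}_1\circ\xi = \Psi$, where $\mathrm{ev}_1(\phi)=\phi(1)$ is the ``evaluation at $1$'' map obtained from the Frobenius $G$). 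Specializing to $\alpha = 1$ via $\iota$ then gives $\xi(F^e_*1) = \Psi$ as an element of $\Hom_R(F^e_*R,\omega_R)$. Since $F^e_*1$ generates the rank-one free $F^e_*R$-module $F^e_*\omega_R$, its image $\Psi$ generates the target, and transporting along $\iota$ shows that $\Psi\in\Hom_R(F^e_*R,R)$ is an $F^e_*R$-module generator.

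For the final sentence of the lemma, I would assume $R$ normal and apply Theorem \ref{TheoremMapsInduceDivisors} to $\Psi$: the associated $\bQ$-divisor $\Delta_\Psi$ is characterized by parts (b) and (c), which demand that $\Hom_R(F^e_*R((p^e-1)\Delta_\Psi), R)$ be free of rank one over $F^e_*R$ with some generator mapping to $\Psi$ under the natural inclusion into $\Hom_R(F^e_*R, R)$. Since $\Hom_R(F^e_*R, R)\cong F^e_*R$ already holds (Gorenstein) and $\Psi$ is itself a generator by the first part, $\Delta_\Psi = 0$ satisfies these requirements, and the uniqueness clause of Theorem \ref{TheoremMapsInduceDivisors}(f) forces $\Delta_\Psi = 0$.

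The main obstacle is really just the duality identification $\xi$ together with its explicit formula. This is a standard instance of Grothendieck duality for the finite Frobenius morphism, and is essentially the identification already used in Lemma \ref{LemmaHomIsCanonical}; in the Gorenstein local setting at hand one can also verify the key point by reducing to height-one primes (where $R$ is a DVR) and invoking an explicit computation analogous to Example \ref{ExampleRegularCase}.
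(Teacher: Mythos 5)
Your proof is correct, and it is closely parallel to the paper's, though organized a bit differently. Both arguments rest on the same two ingredients: (1) duality for the finite morphism $F^e$ gives an $F^e_*R$-linear isomorphism between $F^e_*\omega_R$ and $\Hom_R(F^e_*R,\omega_R)$, and (2) the Gorenstein hypothesis trivializes $\omega_R$ while the Frobenius normalization $G(1)=1$ fixes the scaling. The paper proceeds by \emph{comparison}: it picks an arbitrary generator $\phi$ of $\Hom_R(F^e_*R,R)$, writes $\Psi(\blank)=\phi(d\cdot\blank)$, dualizes the equality back to $G(\blank)=d\cdot\phi^\vee(\blank)$, and concludes $d$ is a unit from $G(1)=1$. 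You instead argue \emph{constructively}: you exhibit $\Psi$ as the image $\xi(F^e_*1)$ of the generator $F^e_*1$ of $F^e_*\omega_R$ under the duality isomorphism $\xi$, hence a generator of the target. The one place your write-up is light is the claimed characterization $\xi(F^e_*\alpha)(F^e_*r)=\Psi(F^e_*(r\alpha))$; this is not a separate fact to be unwound but follows immediately from the definition $\Psi=\mathrm{ev}_1\circ\xi$ together with the $F^e_*R$-linearity of $\xi$ (namely $\xi(F^e_*(r\alpha))=F^e_*r\cdot\xi(F^e_*\alpha)$, and $(F^e_*r\cdot\psi)(F^e_*1)=\psi(F^e_*r)$). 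One advantage of the paper's phrasing is that it never needs an explicit formula for $\xi$ — only the formal fact that dualizing a composition with multiplication by $d$ yields multiplication by $d$ — whereas your route makes the identification more concrete at the cost of having to verify the formula. Both give the same conclusion, and your deduction of $\Delta_\Psi=0$ from Theorem \ref{TheoremMapsInduceDivisors}(b)(c)(f) is fine.
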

\begin{proof}
 First note that the choices we made in the setup of the lemma are all unique up to multiplication by a unit (note there is also the choice of isomorphism between $(F^e)^! \omega_R$ with $F^e_* \omega_R$ as in Remark \ref{RemHaveToBeCareful}).  Therefore, these choices are irrelevant in terms of proving that $\Psi$ is an $F^e_* R$-module generator.  Suppose that $\phi$ is an arbitrary $F^e_* R$-module generator of $\Hom_R(F^e_* R, R)$, and so we can write $\Psi(\blank) = \phi(d \cdot \blank)$ for some $d \in F^e_* R$.  Using the same isomorphisms we selected before, we can view $\phi$ as a map $F^e_* \omega_R \rightarrow \omega_R$.  By duality for a finite morphism, we obtain $\phi^{\vee} : R \rightarrow F^e_* R$.  Note now that $G(\blank) = d \cdot \phi^{\vee}(\blank)$.  But $G$ sends $1$ to $1$ which implies that $d$ is a unit and completes the proof.
\end{proof}

We now need the following (useful) surjectivity.  A similar argument (involving local duality) was used in the characteristic $p > 0$ inversion of adjunction result of \cite[Theorem 4.9]{HaraWatanabeFRegFPure}.

\begin{proposition}
\label{PropSurjectiveCodimension1Map}
Using the notation above, further suppose that $D$ is Cartier in codimension 2 and that $(p^e - 1)(K_X + D + \Delta)$ is Cartier.  Then the natural map of $F^e_* \O_X$-modules:
\[
 \Phi : \Hom_{\O_X}(F^e_* \O_X((p^e - 1)(D + \Delta)), \O_X) \rightarrow \Hom_{\O_D}(F^e_* O_D( (p^e - 1)\Delta |_D), \O_D).
\]
induced by restriction is surjective.
\end{proposition}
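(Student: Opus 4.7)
The plan is to identify $\Phi$, on the open subscheme $U \subseteq X$ where $D$ is Cartier, with the Frobenius pushforward of a line bundle restriction (hence trivially surjective), and then to globalize using the reflexivity of both source and target. By hypothesis $X \setminus U$ has codimension at least $3$ in $X$, so $D \setminus (U \cap D)$ has codimension at least $2$ in the normal scheme $D$ since $D$ has codimension one in $X$.

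Set $L := (1-p^e)(K_X + D + \Delta)$, which is Cartier by hypothesis. Applying Lemma \ref{LemmaHomIsCanonical} to the pair $(X, D+\Delta)$ gives a canonical isomorphism of $F^e_* \O_X$-modules
\[
 \sHom_{\O_X}(F^e_* \O_X((p^e-1)(D+\Delta)), \O_X) \;\cong\; F^e_* \O_X(L).
\]
On $U \cap D$ the Cartier adjunction formula yields $K_D|_{U \cap D} = (K_X + D)|_{U \cap D}$, so $(1-p^e)(K_D + \Delta|_D)|_{U \cap D} = L|_{U \cap D}$ is Cartier; applying Lemma \ref{LemmaHomIsCanonical} on $D$ then gives
\[
 \sHom_{\O_D}(F^e_* \O_D((p^e-1)\Delta|_D), \O_D)|_{U \cap D} \;\cong\; F^e_* \O_D(L|_D)|_{U \cap D}.
\]
The map $\Phi$ is constructed by precomposing $\phi \in \sHom_{\O_X}(F^e_*\O_X((p^e-1)(D+\Delta)),\O_X)$ with the natural inclusion $F^e_* \O_X((p^e-1)\Delta) \hookrightarrow F^e_* \O_X((p^e-1)(D+\Delta))$ and descending modulo $I_D$, so by naturality it corresponds, under the isomorphisms above, to a unit multiple of the Frobenius pushforward of the standard quotient $\O_U(L|_U) \twoheadrightarrow \O_{U \cap D}(L|_D)$, which is surjective because $L|_U$ is a line bundle. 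Thus $\Phi|_{U \cap D}$ is surjective.

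For the globalization, both source and target are reflexive $\sHom$-sheaves---the source is $\sHom(-,\O_X)$ into the reflexive sheaf $\O_X$, and the target is $\sHom(-,\O_D)$ into the reflexive sheaf $\O_D$ (reflexive since $D$ is normal). Given any affine open $V \subseteq X$ and any section $\psi$ of the target over $V \cap D$, surjectivity of $\Phi|_U$ produces a lift $\phi_{V \cap U}$ of $\psi|_{V \cap U \cap D}$ to a section of the source over $V \cap U$. Reflexivity of the source combined with the codimension at least $2$ bound on $V \setminus (V \cap U)$ extends $\phi_{V \cap U}$ uniquely to a section $\phi_V$ of the source over $V$. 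The difference $\Phi(\phi_V) - \psi$ then vanishes on $V \cap U \cap D$; since $(V \cap D) \setminus (V \cap U \cap D)$ has codimension at least $2$ in $V \cap D$ and the target is reflexive, this difference vanishes identically. Hence $\Phi$ is surjective on sections over every affine open, and therefore surjective as a sheaf map.

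The main technical obstacle is verifying that, under the isomorphisms of Lemma \ref{LemmaHomIsCanonical}, the map $\Phi|_{U \cap D}$ is genuinely identified with a nonzero unit multiple of the line bundle restriction, rather than some a priori possibly vanishing map. This amounts to a local computation at the generic point of $D$, where both sides become free rank-one modules over an $F^e_*$ of a DVR and one checks directly, using the trace-like generators exhibited in Example \ref{ExampleRegularCase}, that the induced restriction to the residue field $k(D)$ sends $F^e_* 1 \mapsto 1$ up to a unit.
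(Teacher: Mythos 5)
Your overall strategy---work on the Cartier locus $U$, reduce there to the restriction of a line bundle, and globalize by reflexivity of $\sHom$-sheaves---is a reasonable outline, and the globalization half is roughly parallel to the paper's observation that the image of $\Phi$ is a reflexive $F^e_*\O_D$-module, so that surjectivity can be checked in codimension one on $D$. But the heart of the proposition is precisely the identification you try to dispatch by ``naturality,'' and that is where the argument has a real gap. First, the isomorphisms of Lemma \ref{LemmaHomIsCanonical} are \emph{not} canonical: Remark \ref{RemHaveToBeCareful} is explicit that they depend on a choice of identification of $(F^e)^!\omega_X$ with $\omega_X$, so ``by naturality it corresponds to a unit multiple of the standard quotient'' is exactly the assertion that needs proof, not a consequence of functoriality. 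Second, and more seriously, your proposed verification is at the generic point of $D$, i.e.\ at a codimension-one point of $X$. Two nonzero maps of rank-one reflexive $F^e_*\O_{U\cap D}$-modules that agree up to a unit at the generic point of $D$ need only agree up to a \emph{nonzero} (not necessarily unit) section; the factor could vanish along a prime divisor of $D$, killing surjectivity exactly at a codimension-two point of $X$. This is why the paper localizes at codimension-two points of $X$ (codimension-one of $D$), where $D$ is Cartier and both $X$ and $D$ are Gorenstein, and runs Grothendieck duality for the finite Frobenius through the pair of short exact sequences involving $\times f$ and $1\mapsto f^{p^e-1}$, invoking Lemma \ref{LemmaDualizedMapIsGenerator} to recognize generators. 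That duality diagram \emph{is} the content you label ``the main technical obstacle'' and then leave unproved.

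There is also a secondary, fixable issue in your globalization: $V\cap U$ is not affine, so surjectivity of $\Phi|_U$ as a sheaf map does not immediately produce a global lift $\phi_{V\cap U}$ over $V\cap U$; one would need a vanishing of $H^1(V\cap U,\ker\Phi)$, which requires a depth estimate beyond mere reflexivity. The paper avoids this by localizing at a point first, using that the image is cyclic (hence torsion-free, hence free, hence reflexive), and only then comparing reflexive sheaves; you would do well to reorganize the reduction in that order.
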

\begin{proof}
The statement is local so we may assume that $X = \Spec R$ where $R$ is the spectrum of a local ring. Furthermore, because we are working locally, the domain of $\Phi$ is isomorphic to $F^e_* R$.  Thus the image of $\Phi$ is cyclic as an $F^e_* \O_D$-module which implies that the image of $\Phi$ is a reflexive $F^e_* \O_D$-module.  Therefore, it is sufficient to prove that $\Phi$ is surjective at the codimension one points of $D$ (which correspond to codimension two points of $X$).  We now assume that $X = \Spec R$ is the spectrum of a two dimensional normal local ring and that $D$ is a Cartier divisor defined by a local equation $(f = 0)$.  Since $D$ is normal and one dimensional, $D$ is Gorenstein, and so $X$ is also Gorenstein.  In particular, $(p^e - 1)\Delta$ is Cartier.  This also explains how we can restrict $(p^e -1)\Delta$ to $D$:  perform the restriction at codimension 1 points of $D$, and then take the corresponding divisor.

Consider the following diagram of short exact sequences:
\[
 \xymatrix{
0 \ar[r] & R \ar[d]_{1 \mapsto f^{p^e - 1}}\ar[r]^{\times f} & R \ar[d]^{1 \mapsto 1} \ar[r] & R/f \ar[r] \ar[d]^{1 \mapsto 1} & 0\\
0 \ar[r] & F^e_* R \ar[r]^{F^e_* \times f} & F^e_* R \ar[r] & F^e_* (R/f) \ar[r] & 0.
}
\]
Apply the functor $\Hom_R(\blank, \omega_R)$ and note that we obtain the following diagram of short exact sequences.
\[
 \xymatrix{
0 \ar[r] & \omega_{R} \ar[r]^{\times f} & \omega_{R} \ar[r] & \omega_{R/f} \cong \Ext^1_R(R/f, \omega_R) \ar[r] & 0 \\
0 \ar[r] & F^e_* \omega_{R} \ar[u]^{\alpha} \ar[r]^{F^e_* \times f} & F^e_* \omega_{R} \ar[u]^{\beta} \ar[r] & F^e_* \omega_{R/f} \cong \Ext^1_R(F^e_* (R/f), \omega_R) \ar[u]^{\delta} \ar[r] & 0
}
\]
The sequences are exact on the right because $R$ is Gorenstein and hence Cohen-Macaulay.  Note that by Lemma \ref{LemmaDualizedMapIsGenerator}, we see that $\delta$ and $\alpha$ can be viewed as $F^e_*R$-module generators of the modules $\Hom_{R/f}(F^e_* (R/f), R/f) \cong \Hom_{R/f}(F^e_* \omega_{R/f}, \omega_{R/f})$ and $\Hom_R(F^e_* R, R) \cong \Hom_R(F^e_* \omega_R, \omega_R)$ respectively. Furthermore, the map labeled $\beta$ can be identified with $\alpha \circ \left(F^e_* (\times f^{p^e - 1}) \right)$.

But the diagram proves exactly that the map $\beta \in \Hom_R(F^e_* R, R)$ restricts to a generator of $\Hom_{R/f}(F^e_* \omega_{R/f}, \omega_{R/f})$ which is exactly what we wanted to prove in the case that $\Delta = 0$.  When $\Delta \neq 0$, we can simply pre-multiply the $\alpha$, $\beta$ and $\delta$ with a local generator of the Cartier divisor $(p^e - 1)\Delta$.
\end{proof}

\begin{remark}
 Suppose that $X$ is normal, $\Delta = 0$ and $D$ is Gorenstein in codimension 1 and S2 (but $D$ is not necessarily normal or irreducible), then the map $\Phi$ from Proposition \ref{PropSurjectiveCodimension1Map} is still surjective.  The proof is unchanged.
\end{remark}


The previous example also gives us the following corollary.  Compare with \cite[Theorem 5.1]{KollarShepherdBarron}, \cite[Theorem 2.5]{KaruBoundedness}, \cite[Proposition 2.13]{FedderWatanabe} and \cite[Theorem 5.1]{SchwedeEasyCharacterization}.

\begin{corollary}
Suppose that $R$ is normal, local and $\bQ$-Gorenstein with index not divisible by $p$ and that $f \in R$ is a non-zero divisor such that the map $\Phi$ from Proposition \ref{PropSurjectiveCodimension1Map} (where $D = \Div(f)$ and $\Delta = 0$) is surjective.\footnote{Note that $\Phi$ is surjective if $R/f$ is normal, or more generally if $R/f$ is S2 and Gorenstein in codimension 1.}

If $R[f^{-1}]$ is strongly $F$-regular and $R/f$ is $F$-pure then $R$ is strongly $F$-regular.  In particular, both $R$ and $R/f$ are Cohen-Macaulay.
\end{corollary}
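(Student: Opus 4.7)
My strategy is to show directly that for every nonzero $c \in R$ there exist $e \geq 1$ and $\phi \in \Hom_R(F^e_* R, R)$ with $\phi(F^e_* c) = 1$, which establishes strong $F$-regularity of the normal local domain $R$; the Cohen-Macaulay conclusion then follows from the fact that strongly $F$-regular rings are Cohen-Macaulay, together with $f$ being a nonzerodivisor on $R$. The construction produces $\phi$ by composing two ingredients: one originating from sharp $F$-purity of the pair $(R, D)$, which I deduce from the hypotheses; and another from strong $F$-regularity of $R[f^{-1}]$.

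For the first ingredient, I would first establish that $(R, D)$ is sharply $F$-pure. Choose $e$ large enough that $(p^e - 1)(K_R + D)$ is Cartier (possible since $R$ is $\bQ$-Gorenstein of index coprime to $p$ and $D$ is Cartier) and that $F$-purity of $R/f$ is witnessed at level $e$ by some surjective $\theta \in \Hom_{R/f}(F^e_* R/f, R/f)$; both conditions are satisfied once $e$ is a sufficiently divisible positive integer, using that $F$-purity propagates to multiples via iterated splittings. By hypothesis $\Phi$ is surjective, so lift $\theta$ to some $\phi \in \Hom_R(F^e_* R((p^e - 1)D), R)$ with $\Phi(\phi) = \theta$. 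Picking $\bar a \in R/f$ with $\theta(F^e_* \bar a) = \bar 1$ and lifting to $a \in R$ gives $\phi(F^e_* a) = 1 + fs$ for some $s \in R$, which is a unit since $f \in \bm$, so $\phi(F^e_* R) = R$. Thus $(R, D)$ is sharply $F$-pure. Now fix a generator $\psi$ of the rank-one free $F^e_* R$-module $\Hom_R(F^e_* R, R)$. Under the inclusion $\Hom_R(F^e_* R((p^e - 1)D), R) \hookrightarrow \Hom_R(F^e_* R, R)$, which corresponds to multiplication by $f^{p^e - 1}$ up to a unit of $F^e_* R$, $\phi|_{F^e_* R}$ can be written as $\psi(F^e_* h f^{p^e - 1} \cdot -)$ for some $h \in F^e_* R$; after absorbing $h a$ into the map we obtain $\phi_2 \in \Hom_R(F^e_* R, R)$ with $\phi_2(F^e_* f^{p^e - 1}) = 1$. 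An induction using $R$-linearity and the identity $f^{p^{(n+1)e} - 1} = (f^{p^{ne} - 1})^{p^e} \cdot f^{p^e - 1}$ then shows the $n$-fold self-composition $\phi_2^{(n)} := \phi_2 \circ F^e_* \phi_2 \circ \cdots \circ F^{(n-1)e}_* \phi_2 \in \Hom_R(F^{ne}_* R, R)$ satisfies $\phi_2^{(n)}(F^{ne}_* f^{p^{ne} - 1}) = 1$ for all $n \geq 1$.

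For the second ingredient, fix nonzero $c \in R$. Since $R[f^{-1}]$ is strongly $F$-regular, there exist $e_1$ and $\tilde\phi_1 \in \Hom_{R[f^{-1}]}(F^{e_1}_* R[f^{-1}], R[f^{-1}])$ with $\tilde\phi_1(F^{e_1}_* c) = 1$. Using the identification $\Hom_{R[f^{-1}]}(F^{e_1}_* R[f^{-1}], R[f^{-1}]) = \Hom_R(F^{e_1}_* R, R)[f^{-1}]$ (valid because $F^{e_1}_* R$ is a finitely generated $R$-module), clear denominators to produce $\phi_1 \in \Hom_R(F^{e_1}_* R, R)$ and $j \geq 0$ with $\phi_1(F^{e_1}_* c) = f^j$. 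Pick $n$ with $p^{ne} - 1 \geq j$ and set $\phi_3(F^{ne}_* x) := \phi_2^{(n)}(F^{ne}_* f^{p^{ne} - 1 - j} x)$, so that $\phi_3(F^{ne}_* f^j) = 1$. The composition $\phi_3 \circ F^{ne}_* \phi_1 \in \Hom_R(F^{ne + e_1}_* R, R)$ then sends $F^{ne + e_1}_* c$ to $\phi_3(F^{ne}_* f^j) = 1$, completing the proof of strong $F$-regularity; $R$ is consequently Cohen-Macaulay, and then so is $R/f$. The main obstacle is the first step: carefully transferring $F$-purity of $R/f$ through $\Phi$ to sharp $F$-purity of $(R, D)$, and extracting the concrete map $\phi_2$ with $\phi_2(F^e_* f^{p^e - 1}) = 1$. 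Everything thereafter---the inductive iteration of $\phi_2$, the denominator clearing from $R[f^{-1}]$, and the final composition---is routine.
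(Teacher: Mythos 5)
Your proof is correct and takes the same approach as the paper: lift a Frobenius splitting of $R/f$ through the surjection $\Phi$ to obtain a surjective $\phi \in \Hom_R(F^e_*R, R)$ that, being the restriction of an element of $\Hom_R(F^e_* R((p^e-1)D), R)$, sends a unit multiple of $f^{p^e-1}$ to $1$, and then invoke strong $F$-regularity of $R[f^{-1}]$. The only difference is expository: the paper appeals implicitly to the standard localization criterion for strong $F$-regularity (essentially Hochster--Huneke, Theorem 3.3 of \cite{HochsterHunekeTightClosureAndStrongFRegularity}) at the final step, whereas you spell it out in full via the iteration $\phi_2^{(n)}(F^{ne}_* f^{p^{ne}-1}) = 1$ and the denominator-clearing argument.
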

\begin{proof}
Since the map
\[
\Phi : \Hom_R(F^e_* R((p^e - 1)\Div(f)), R) \rightarrow \Hom_{R/f}(F^e_* (R/f), R/f).
\]
is surjective, a splitting $\overline{\phi} \in \Hom_{R/f}(F^e_* (R/f), R/f)$ has a pre-image $\phi \in \Hom_R(F^e_* R, R)$.  It then follows (just as in Observation \ref{ObsMapsRestrictToCenters}) that the map $\phi$ is also surjective.  In particular, $\phi$ sends some multiple of $f^{p^e - 1}$ to $1$.  But then since $R[f^{-1}]$ is strongly $F$-regular, we see that $R$ itself is strongly $F$-regular.
\end{proof}

\begin{corollary}
 Suppose that $S$ is an $F$-finite regular local ring and $I$ is a prime ideal of $S$ such that $R = S/I$ is normal and satisfies
\[
 (I^{[p^e]} : I) = I^{[p^e]} + (g)
\]
for some $g \in S$ (note that this implies that $(p^e - 1)K_R$ is Cartier).  Further suppose that $f \in S$ is an element whose image in $R$ is non-zero and such that $R/(fR)$ is normal (or S2 and Gorenstein in codimension 1).  Then
\[
 \left((I + (f) )^{[p^e]} : (I + (f) )\right) = (I + (f) )^{[p^e]} + (f^{p^e -1} g).
\]
\end{corollary}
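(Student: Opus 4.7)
The plan is to combine Fedder's criterion with the surjectivity of Proposition~\ref{PropSurjectiveCodimension1Map}, applied to the Cartier divisor $D := \Div(f)$ on $\Spec R$. Write $\overline I := I + (f)$ and $\overline R := R/(f) = S/\overline I$. Since $\overline R$ is normal (or $S2$ and Gorenstein in codimension one) and $D$ is Cartier, $(p^e - 1)(K_R + D)$ is a Cartier divisor, so Lemma~\ref{LemmaHomIsCanonical} makes both $\Hom_R(F^e_* R((p^e-1)D), R)$ and $\Hom_{\overline R}(F^e_* \overline R, \overline R)$ rank-one free modules over $F^e_* R$ and $F^e_* \overline R$ respectively.

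First I would fix an $F^e_* S$-module generator $T \in \Hom_S(F^e_* S, S)$. By \cite[Lemma~1.6]{FedderFPureRat} (as used in Corollary~\ref{CorDescriptionOfFedderColon}), for any ideal $J \subseteq S$ the assignment $d + J^{[p^e]} \longleftrightarrow \phi_d$, where $\phi_d$ denotes the descent to $S/J$ of $T \circ (F^e_* \cdot d)$, identifies $(J^{[p^e]} : J)/J^{[p^e]}$ with $\Hom_{S/J}(F^e_*(S/J), S/J)$. The hypothesis $(I^{[p^e]}:I) = I^{[p^e]} + (g)$ means precisely that $\phi_g$ generates $\Hom_R(F^e_* R, R)$, and the goal reduces to showing that $\phi_{f^{p^e-1} g}$ generates $\Hom_{\overline R}(F^e_* \overline R, \overline R)$ as an $F^e_* \overline R$-module.

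Next I would identify a generator of $\Hom_R(F^e_* R((p^e-1)D), R)$ as a submodule of $\Hom_R(F^e_* R, R)$. Writing $R((p^e-1)D) = \tfrac{1}{f^{p^e-1}} R$ inside $k(R)$, the inclusion $F^e_* R \hookrightarrow F^e_* R((p^e-1)D)$ becomes multiplication by $F^e_* f^{p^e-1}$ on rank-one free $F^e_* R$-modules, since $F^e_* 1 = F^e_*(f^{p^e-1}) \cdot F^e_*(1/f^{p^e-1})$. Dualizing, the restriction map $\Hom_R(F^e_* R((p^e-1)D), R) \hookrightarrow \Hom_R(F^e_* R, R)$ has image the cyclic $F^e_* R$-submodule generated by $\phi_g \circ (F^e_* \cdot f^{p^e-1}) = \phi_{f^{p^e-1} g}$.

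Finally, Proposition~\ref{PropSurjectiveCodimension1Map} (with $\Delta = 0$) supplies the surjection
\[
\Hom_R(F^e_* R((p^e-1)D), R) \twoheadrightarrow \Hom_{\overline R}(F^e_* \overline R, \overline R),
\]
and any surjection between cyclic modules sends a generator to a generator. Thus the descent $\overline{\phi_{f^{p^e-1} g}}$ generates the right-hand side; running Fedder's correspondence in reverse for the ideal $\overline I \subseteq S$ then forces $(\overline I^{[p^e]} : \overline I) = \overline I^{[p^e]} + (f^{p^e-1} g)$. I expect the main obstacle to be Step~2, the bookkeeping that identifies the submodule generator as precisely $\phi_{f^{p^e-1} g}$; once that is in place, the surjectivity in Proposition~\ref{PropSurjectiveCodimension1Map} does all the decisive work.
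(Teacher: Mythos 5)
Your proof is correct and follows essentially the same route as the paper: use Fedder's lemma to translate colon ideals into generators of $\Hom(F^e_* \cdot, \cdot)$, show that the inclusion $F^e_* R \hookrightarrow F^e_* R((p^e-1)\Div(f))$ corresponds (after dualizing) to multiplication by $f^{p^e-1}$, and then invoke the surjection from Proposition~\ref{PropSurjectiveCodimension1Map} to push the generator down to $\overline R$. The paper's proof is terser and leaves the $f^{p^e-1}$ bookkeeping implicit (it is visible in the identification $\beta = \alpha \circ F^e_*(\times f^{p^e-1})$ inside the proof of Proposition~\ref{PropSurjectiveCodimension1Map}), whereas you spell it out explicitly, but the underlying argument is the same.
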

\begin{proof}
 If $A = S/(I + f)$, then it follows from Proposition \ref{PropSurjectiveCodimension1Map} that $\Hom_A(F^e_* A, A)$ is free of rank 1 as an $F^e_* A$-module and furthermore that a generator of $\Hom_A(F^e_* A, A)$ is obtained by multiplying a generator of $\Hom_R(F^e_* R, R)$ by $f^{p^e - 1}$ and restricting.  The result then follows from \cite[Lemma 1.6]{FedderFPureRat}.
\end{proof}

\begin{remark}
\label{RemFurtherCommentsOnTheDifferent}
Suppose that $D$ is a normal prime divisor on $X$ a normal scheme.  Further suppose that $\Delta$ is an effective $\bQ$-divisor (without common components with $D$) such that $K_X + \Delta + D$ is $\bQ$-Cartier.  Then there exists a canonically determined effective $\bQ$-divisor $\Delta_D$ on $D$ with $(K_X + \Delta + D)|_D \sim_{\bQ} K_D + \Delta_D$; see \cite[Chapter 16]{KollarFlipsAndAbundance} for a description of the construction of the different (which can be performed in any characteristic).  Furthermore, in characteristic zero, the singularities of $(X, D + \Delta)$ near $D$ are closely related to the singularities of $(D, \Delta_D)$; see for example \cite{KollarFlipsAndAbundance} and \cite{KawakitaInversion}.  We expect that the different coincides with the divisor $\Delta_{R/Q}$ we have constructed, but we do not have a proof (the problem might be quite easy if approached correctly).  One should note that we believe that the divisor called the ``different'' in \cite[Theorem 4.3]{TakagiPLTAdjoint} is $\Delta_Q$.  Again, we suspect that $\Delta_{R/Q}$ coincides with the different in general.
\end{remark}

\section{Comments on normalizing centers of $F$-purity}

In the characteristic zero setting, one obstruction to working with an arbitrary log canonical centers $W \subseteq X$ is the fact that $W$ may not be normal.  One way around this issue is to normalize the subscheme $W$ (even if $W$ is a divisor).  Therefore, it is tempting to do the same in positive characteristic.  Using Lemma \ref{LemmaLiftMapToNormalization}, one can do something like this in characteristic zero.  In particular, in Proposition \ref{PropositionPropertiesOfNormalizedRestrictedDelta} we do obtain canonically determined $\bQ$-divisors on the normalization of any center of $F$-purity.  However, a full analog of inversion of adjunction on log canonicity via normalizing centers of $F$-purity is impossible, as we will see in Example \ref{ExampleCannotNormalize}.

\begin{lemma}
\label{LemmaLiftMapToNormalization}
Suppose that $R$ is a reduced $F$-finite ring and that $\phi \in \Hom_R(F^e_* R, R)$.  Set $R^N$ to be the normalization of $R$ inside the total field of fractions.  Then $\phi$ extends to a unique $R^N$-linear map $\phi^N : F^e_* R^N \rightarrow R^N$ that restricts back to $\phi$.
\end{lemma}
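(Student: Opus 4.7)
The plan is to define $\phi^N$ by a conductor-based formula and then verify containment of the image in $R^N$ via the classical Cayley--Hamilton characterization.

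Let $\mathfrak{c} = \{c \in R : cR^N \subseteq R\}$ denote the conductor; since $R^N$ is module-finite over $R$ (as $R$ is $F$-finite and reduced, hence excellent and Japanese), $\mathfrak{c}$ contains a non-zerodivisor $r$, and consequently $r^{p^e}R^N \subseteq R$. First I would extend $\phi$ to a $K$-linear map $\phi_K : F^e_* K \to K$ on the total ring of fractions of $R$ (well-defined since $K$ is a finite product of fields and $F^e_*$ commutes with inverting non-zerodivisors), and set $\phi^N := \phi_K|_{F^e_* R^N}$, so that for $x \in R^N$ one has the explicit formula
\[
  \phi^N(F^e_* x) = r^{-1}\phi(F^e_*(r^{p^e}x)),
\]
well-defined because $F^e_*(r^{p^e}x) \in F^e_* R$. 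The uniqueness claim is immediate from this formula: any $R^N$-linear extension must send $r \cdot F^e_* x$ to both $r\phi^N(F^e_* x)$ and $\phi(F^e_*(r^{p^e}x))$, and $r$ is a non-zerodivisor. Independence of the choice of $r$, the fact that $\phi^N$ restricts to $\phi$, and $R^N$-linearity are routine manipulations using $R$-linearity of $\phi$ and cancellation by $r$; in the $R^N$-linearity check one arrives at an identity of the form $r\bigl[\,a\phi(F^e_*(r^{p^e}x)) - \phi(F^e_*(r^{p^e}a^{p^e}x))\,\bigr] = 0$ for $a \in R^N$, which gives the desired equality.

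The main step, and the main obstacle, is showing that $M := \phi^N(F^e_* R^N) \subseteq K$ is contained in $R^N$. For any $c \in \mathfrak{c}$ we have $c^{p^e}R^N \subseteq R$, and the $R^N$-linearity of $\phi^N$ yields
\[
  cM \;=\; \phi^N(F^e_*(c^{p^e}R^N)) \;=\; \phi(F^e_*(c^{p^e}R^N)) \;\subseteq\; R.
\]
Moreover $cM$ is closed under multiplication by $R^N$, since for $s \in R^N$ and $y \in M$ one has $s(cy) = c(sy) \in cM$, using that $M$ is itself $R^N$-stable. Thus $cM$ is an ideal of $R$ that is simultaneously an $R^N$-module, so by the defining maximality of $\mathfrak{c}$, $cM \subseteq \mathfrak{c}$. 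Varying $c$, every $y \in M$ satisfies $y\mathfrak{c} \subseteq \mathfrak{c}$. Because $\mathfrak{c}$ is a finitely generated $R$-module containing a non-zerodivisor (hence faithful), the determinant trick applied to the multiplication-by-$y$ endomorphism of $\mathfrak{c}$ produces a monic integral equation over $R$ satisfied by $y$, so $y \in R^N$. The delicate point is recognizing this conductor-theoretic reformulation $R^N = \{y \in K : y\mathfrak{c} \subseteq \mathfrak{c}\}$; the rest is bookkeeping with the explicit formula.
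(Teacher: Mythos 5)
Your proof is correct, and the construction of $\phi^N$ (extend $\phi$ to the total quotient ring $k(R)$ by inverting nonzerodivisors, then restrict the domain to $F^e_* R^N$) is exactly what the paper does. Where you differ is in the treatment of the containment $\phi^N(F^e_* R^N) \subseteq R^N$: the paper simply cites this to Brion--Kumar (hint to Exercise 1.2.E(4)) and to Proposition 7.11 of \cite{SchwedeCentersOfFPurity}, whereas you give a self-contained argument via the conductor $\mathfrak{c}$. Your conductor argument is the standard one underlying those references: show $cM$ is a common ideal of $R$ and $R^N$ for every $c \in \mathfrak{c}$ (hence $cM \subseteq \mathfrak{c}$ by maximality of the conductor), conclude $y\mathfrak{c} \subseteq \mathfrak{c}$ for every $y \in M$, and apply the determinant trick with the finitely generated faithful module $\mathfrak{c}$ to get integrality. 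One small note: you only need the inclusion $\{y \in K : y\mathfrak{c} \subseteq \mathfrak{c}\} \subseteq R^N$, not the full equality you mention at the end, and the forward direction is exactly Cayley--Hamilton as you say. Your uniqueness argument by cancellation against the nonzerodivisor $r$ is a concrete version of the paper's appeal to injectivity of $\Hom_R(F^e_* R, R) \to \Hom_{k(R)}(F^e_* k(R), k(R))$; both are fine. In short: same route, with the cited containment actually proved.
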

\begin{proof}
To construct $\phi^N$, simply tensor $\phi$ with the total field of fractions $k(R)$ of $R$ and then restrict the domain to $F^e_* R^N$.  The fact that the image of $\phi^N$ is contained inside $R^N$ follows from \cite[Hint to Exercise 1.2.E(4)]{BrionKumarFrobeniusSplitting}; for a complete proof see \cite[Proposition 7.11]{SchwedeCentersOfFPurity}.  The fact that this $\phi^N$ is unique follows from the fact that the natural map
\[
\Hom_R(F^e_* R, R) \rightarrow \Hom_R(F^e_* R, R) \tensor_R k(R) \cong \Hom_{k(R)}(F^e_* k(R), k(R))
\]
is injective.
\end{proof}

\begin{proposition}
\label{PropositionPropertiesOfNormalizedRestrictedDelta}
Suppose that $X = \Spec R$ and $(X, \Delta)$ is a pair and that $\sHom_{\O_X}(F^e_* \O_X((p^e - 1)\Delta), \O_X)$ is free of rank 1 as an $F^e_* \O_X$-module.  Further suppose that $\Spec R/I = W \subset X$ is a reduced closed subscheme such that $(X, \Delta)$ is sharply $F$-pure at the generic points of $W$ and $I$ is  $F$-compatible with respect to $(R, \Delta)$.  Set $\eta : \left( \Spec R/I \right)^N = W^N \rightarrow W$ to be the normalization map and write $W^N = \coprod_{i = 1}^m W_i^N$; the disjoint union of $W^N$ into its irreducible components.

Then there exists a canonically determined $\bQ$-divisor $\Delta_{W^N}$ on $W^N$ satisfying the following properties:
\begin{itemize}
\item[(i)]  If one sets $\Delta_{W^N, i}$ to be the portion of $\Delta_{W^N}$ on $W^N_i$, then $(p^e - 1)(K_{W^N_i} + \Delta_{W^N, i})$ is Cartier and furthermore $\sHom_{\O_{W_i^N}}(F^e_* \O_{W_i^N}((p^e - 1)\Delta_{W^N, i}), \O_{W_i^N}) \cong F^e_* \O_{W_i^N}$ as $F^e_* \O_{W_i^N}$-modules.
\item[(ii)]  The conductor ideal of $(R/I)$ in $(R/I)^N$ is $F$-compatible with respect to $((R/I)^N, \Delta_{W^N})$.
\item[(iii)]  The big test ideal $\tau_b( (R/I)^N; \Delta_{W^N})$ of $((R/I)^N, \Delta_{W^N})$ is contained in the conductor ideal of $R/I \subseteq (R/I)^N$.
\item[(iv)] If $(X, \Delta)$ is sharply $F$-pure, then $(W^N, \Delta_{W^N})$ is also sharply $F$-pure.
\item[(v)]  If $\overline J$ is an ideal of $(R/I)^N$ which is $F$-compatible with respect to $(R, \Delta_{W^N})$, then inverse image $J$ of $\overline J$ in $R$ is $F$-compatible with respect to $(R, \Delta)$.  (In particular, $\tau_{b}(R, \nsubseteq I; \Delta)$, defined as suggested in Remark \ref{RemarkOnNonIntegralCenters}, is contained in the inverse image of $\tau_{b}( (R/I)^N, \Delta_{W^N})$).
\end{itemize}
\end{proposition}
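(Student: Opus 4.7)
The plan is to carry out the construction of Theorem \ref{ThmFirstFAdjunction} and then lift one step further to the normalization. Start with a generator $\phi \in \Hom_R(F^e_* R, R)$ of the rank-one free $F^e_*R$-module $\sHom_{\O_X}(F^e_*\O_X((p^e-1)\Delta), \O_X)$, which corresponds to $\Delta$ via Theorem \ref{TheoremMapsInduceDivisors}. Since $I$ is $F$-compatible, $\phi$ descends to $\phi_I \in \Hom_{R/I}(F^e_*(R/I), R/I)$, nonzero on each irreducible component by the sharp $F$-purity assumption at the generic points of $W$. Lemma \ref{LemmaLiftMapToNormalization} extends $\phi_I$ uniquely to $\phi_I^N \in \Hom_{(R/I)^N}(F^e_*(R/I)^N, (R/I)^N)$; restricting componentwise and invoking Theorem \ref{TheoremMapsInduceDivisors} yields the divisors $\Delta_{W^N, i}$ which assemble into $\Delta_{W^N}$. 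Canonicity follows from Theorem \ref{TheoremMapsInduceDivisors}(f), because a different choice of $\phi$ differs by a unit multiple which propagates harmlessly through descent and lifting; property (i) is immediate from Theorem \ref{TheoremMapsInduceDivisors}(b) applied on each $W_i^N$.

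For (iv), Theorem \ref{TheoremMapsInduceDivisors}(d) identifies sharp $F$-purity of $(X, \Delta)$ with surjectivity of $\phi$. Surjectivity passes to $\phi_I$ by Observation \ref{ObsMapsRestrictToCenters} and then to $\phi_I^N$, because the image of $\phi_I^N$ is an $(R/I)^N$-ideal containing the image $R/I$ of $\phi_I$ and hence containing $1$. For (ii), let $C = \{c \in (R/I)^N : c(R/I)^N \subseteq R/I\}$, an ideal of $(R/I)^N$ contained in $R/I$. For $c \in C$ and $r \in (R/I)^N$, one has $cr^{p^e} \in R/I$, so
\[
 \phi_I^N(F^e_* c)\cdot r = \phi_I^N(F^e_*(cr^{p^e})) = \phi_I(F^e_*(cr^{p^e})) \in R/I,
\]
showing $\phi_I^N(F^e_*c) \in C$; compatibility with higher-level maps follows by composition (Corollary \ref{CorEveryMapComposesWithAGenerator}). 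For (iii), since $(R/I)^N$ is a finite $R/I$-module, a common denominator of a set of module generators is a non-zero-divisor $z \in R/I$ lying in $C$ and not in any minimal prime of $(R/I)^N$, so $C$ is an $F$-compatible ideal of $(R/I)^N$ with nontrivial intersection with $(R/I)^{N,\circ}$; by Proposition \ref{PropositionBigTestIdealIsSmallest} (or Lemma \ref{LemmaPropertiesOfUniformlyFCompatible}(4)), $\tau_b((R/I)^N; \Delta_{W^N}) \subseteq C$.

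For (v), given $\overline J$ that is $F$-compatible with respect to $((R/I)^N, \Delta_{W^N})$, let $J$ be its preimage in $R$ (an ideal containing $I$). For every $\phi' \in \Hom_R(F^{e'}_*R(\lceil(p^{e'}-1)\Delta\rceil), R)$, I run the same descent/lift cascade to produce $\phi_I^{\prime,N}$, and conclude $\phi_I^{\prime,N}(F^{e'}_*\overline J) \subseteq \overline J$ from the $F$-compatibility of $\overline J$; pulling back gives $\phi'(F^{e'}_*J) \subseteq J$. The containment of $\tau_{b}(R, \nsubseteq I; \Delta)$ in the preimage of $\tau_b((R/I)^N; \Delta_{W^N})$ then follows from minimality, since that preimage is $F$-compatible by what was just proved and (by (iii)) lies in the conductor, which contains non-zero-divisors of $R/I$ lifting out of every minimal prime of $I$. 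The main technical obstacle is verifying in this last argument that the lifted $\phi_I^{\prime,N}$ really does lie in $\Hom_{(R/I)^N}(F^{e'}_*(R/I)^N(\lceil(p^{e'}-1)\Delta_{W^N}\rceil), (R/I)^N)$, i.e.\ that the divisor of $\phi'$ dominating $(p^{e'}-1)\Delta$ is preserved as one dominating $(p^{e'}-1)\Delta_{W^N}$ after descent and normalization. This I plan to handle by tracking the divisor correspondence of Theorem \ref{TheoremMapsInduceDivisors} across both operations, using Theorem \ref{TheoremMapsInduceDivisors}(e) together with Corollary \ref{CorEveryMapComposesWithAGenerator} to reduce to the case $e' = e$, where the correspondence is tautological by the construction of $\Delta_{W^N}$.
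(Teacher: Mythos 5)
Your construction of $\Delta_{W^N}$ and your proofs of (i), (iv), and the canonicity all match the paper's route exactly: take a generator $\phi$ associated to $\Delta$, descend to $\phi_I$ on $R/I$ using $F$-compatibility of $I$, lift to $\phi_I^N$ on $(R/I)^N$ via Lemma \ref{LemmaLiftMapToNormalization}, and read off $\Delta_{W^N}$ from Theorem \ref{TheoremMapsInduceDivisors}. The argument for (iii) (that $\tau_b$ is the smallest $F$-compatible ideal meeting $((R/I)^N)^\circ$) is also the paper's. The two places where you depart from the paper are the same, and they share a common difficulty: checking $F$-compatibility at Frobenius levels $e'$ not divisible by $e$.

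For (ii), the paper simply cites \cite[Proposition 7.10]{SchwedeCentersOfFPurity}, whereas you give a direct computation with the conductor $C$. That computation is correct and clean for $\phi_I^N$ itself, and it extends to self-compositions $(\phi_I^N)_n$ and their $F^{ne}_*(R/I)^N$-multiples, so it handles every element of $\Hom_{(R/I)^N}(F^{ne}_*(R/I)^N((p^{ne}-1)\Delta_{W^N}), (R/I)^N)$. But $F$-compatibility (Definition \ref{DefnUniformlyFCompat}) quantifies over \emph{all} $e'>0$, and the conductor is in general not radical (e.g.\ $C=(t^2)\subset k[t]$ over $k[t^2,t^3]$), so Proposition \ref{PropUniformlyFCompatIfAndOnlyIf} does not apply to close the gap for $e'$ with $e\nmid e'$. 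The external reference is doing precisely that work, and your argument as written does not replace it. For (v) you explicitly flag the ``main technical obstacle'' — showing $\phi_I^{\prime,N}$ lands in the $\Delta_{W^N}$-twisted Hom module — but the remedy you propose (compose to ``reduce to the case $e'=e$'' using Theorem \ref{TheoremMapsInduceDivisors}(e) and Corollary \ref{CorEveryMapComposesWithAGenerator}) does not work as stated: self-composition of $\phi'$ reaches only levels $ne'$, and factoring through the generator $\phi$ changes the level by $+e$, so neither operation descends you to level $e$ when $e\nmid e'$. The paper's own treatment of (v) is admittedly terse on this same point, so the difficulty is shared, but your proposal leaves it open.
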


\begin{remark}
Even though $W^N$ is not necessarily equidimensional, it is easy to define $K_{W^N}$ since we can work on each component individually.
\end{remark}

\begin{proof}
We can associate to $\Delta$ a map $\phi : F^e_* \Hom_R(F^e_* R, R)$ (up to scaling by a unit).  By assumption, this $\phi$ restricts to a map $\phi_I \in \Hom_{R/I}(F^e_* (R/I), R/I)$ which is non-zero at the generic point of each irreducible component of $R/I$.  By Lemma \ref{LemmaLiftMapToNormalization}, this map extends to a map $\phi_I^N \in \Hom_{\O_{W^N}}(F^e_* \O_{W^N}, \O_{W^N})$.  Thus this map gives us our $\Delta_{W^N}$ by Theorem \ref{TheoremMapsInduceDivisors}.  Notice that the image of a unit under $R \rightarrow (R/I)^N$ is still a unit, so that $\Delta_{W^N}$ is uniquely determined.

At this point, statement (i) is obvious.  Statement (ii) follows from \cite[Proposition 7.10]{SchwedeCentersOfFPurity} and statement (iii) follows from the fact that the big test ideal is the smallest ideal $F$-compatible ideal with respect to $((R/I)^N, \Delta_{W^N})$.  For statement (iv), note that if $\phi$ is surjective, then so is $\phi_I$.  But then it is easy to see that $\phi_I^N$ is also surjective.


To prove (v), we first note that $\phi_I(F^e_* (\overline{J} \cap R/I) ) \subseteq \overline{J} \cap R/I$.  But then we see that the pre-image of $\overline{J} \cap R/I$ in $R$ is $F$-compatible with respect to $(R, \Delta)$.
\end{proof}


One might hope that the converse to property (iv) of Proposition \ref{PropositionPropertiesOfNormalizedRestrictedDelta} above holds, but unfortunately, this is not the case.  Of course, it is easy to see that if $\phi_I^N$ is actually a splitting (ie, if it sends $1$ to $1$), then so is $\phi_I$ and thus $\phi$ is surjective near $I$ (which would imply that $(R, \Delta)$ is sharply $F$-pure near $I$).  However, it can happen that $\phi_I^N$ is surjective (that is, it sends some $x$ to $1$) but $\phi_I$ is not (in particular, the element $x$ is in $(R/I)^N$ but \emph{not} in $R/I$).  The following example illustrates this phenomenon.

\begin{example}
\label{ExampleCannotNormalize}
Suppose that $R = k[a,b,c]$ where $k = \bF_2$, the field with two elements (any perfect field of characteristic two will work).  Set $I = (ac^2 + b^2)$.  Set $\Delta = \divisor (I)$.  It is easy to see that $I$ is $F$-compatible with respect to $(R, \Delta)$.  Notice that we can write
\[
R/I = k[a,b,c]/(ac^2 + b^2) \cong k[x^2, xy, y].
\]
Therefore, the normalization of $R/I$ is simply $k[x,y]$.  We will exhibit a map $\phi_I : F_* (R/I) \rightarrow R/I$, restricted from a map $\phi : F_* R \rightarrow R$, that is not surjective, but that the extension $\phi_I^N$ to the normalization is surjective.  Of course, $R/I$ is not weakly normal and so it is not $F$-pure, which implies that no such $\phi_*$ can be surjective.

To construct $\phi$, we simply take the following map which is associated to $\Delta$.  Explicitly, we take the map
$\psi : F_* R \rightarrow R$ that sends $abc$ to $1$ (and all other lower-degree monomials to zero) and pre-compose with multiplication by $ac^2 + b^2$.  That is,
\[
\phi(\blank) = \psi\left((ac^2 + b^2) \cdot \blank \right).
\]
We compute $\phi$ on the relevant monomials.
\begin{center}
\begin{tabular}{ccccc}
$\phi(1) = 0$ & &  $\phi(c) = 0$ & & $\phi(bc) = c$\\
$\phi(a) = 0$ & &  $\phi(ab) = 0$ & & $\phi(abc) = b$\\
$\phi(b) = 0$ & &  $\phi(ac) = 0$ & & \\
\end{tabular}
\end{center}
Thus we see that $\phi$ (and therefore also $\phi_I$) is not surjective when localized at the origin.  Now we wish to consider the corresponding map on $k[x,y]$.  First we retranslate $\phi$ in terms of the variables $x$ and $y$.
\begin{center}
\begin{tabular}{ccccc}
$\phi_I^N(1) = 0$ & & $\phi_I^N(y) = 0$ & & $\phi_I^N(xy^2) = y$ \\
$\phi_I^N(x^2) = 0$ & & $\phi_I^N(x^3y) = 0$ & & $\phi_I^N(x^3y^2) = x y$ \\
$\phi_I^N(xy) = 0$ & & $\phi_I^N(x^2) = 0$ & & \\
\end{tabular}
\end{center}
Therefore, $y = \phi_I^N(xy^2) = y\phi_I^N(x)$ which implies that $\phi_I^N(x) = 1$.
\end{example}

\begin{remark}
Of course, in the above example, there were certain purely-inseparable field extensions in the normalization.  In particular, $R/I$ was
not weakly normal.  It may be that without such pure-inseparability, when $\phi_I^N$ is surjective so is $\phi$.
\end{remark}

\section{Further remarks and questions}
\label{SectionFurtherRemarks}

We conclude with some remarks and speculation.

\begin{remark}
\label{RemarkFinalNonNormal}
It is natural to try to generalize the results of this paper outside of the case when $R$ is normal.  One approach to this is to normalize $R$ as we discussed in the previous section.  However, as we saw, this approach has limitations.  Another more direct approach might be, instead of working with pairs $(R, \Delta)$ such that $(p^e -1)(K_R + \Delta)$ is Cartier, to consider pairs $(R, N)$ where $N$ is a free (or perhaps locally free) subsheaf of $\Hom_R(F^e_* R, R)$ for some $e$.

Perhaps yet a better formulation would be to consider first the graded non-commutative algebra $\oplus_e \Hom_R(F^e_* R, R)$ where the multiplication is defined by composition.  That is, for $\phi \in \Hom_R(F^d_* R, R)$ and $\psi \in \Hom_R(F^e_* R, R)$ the product $\phi \cdot \psi$ is defined to be $\phi \circ F^d_* \psi \in \Hom_R(F^{e+d}_* R, R)$.   Dually, one could consider the non-commutative ring $\mathcal{F}(E_R)$ of \cite{LyubeznikSmithCommutationOfTestIdealWithLocalization}.  Then perhaps a pair could be the combined data of the ring $R$ and a graded subalgebra $A \subseteq \oplus_e \Hom_R(F^e_* R, R)$ such that $A$ is generated as an algebra over $A_0 \cong R$ by a single element $\phi \in \Hom_R(F^e_* R, R)$ for some $e$.  Two pairs $(R, A)$ and $(R, A')$ would be said to be equivalent, if there is an integer $n > 0$ such that $A_{ne} = A'_{ne}$ for all $e$ (here $A_{ne}$ is the $ne$'th graded piece of $A$).

Almost all of the results of this paper can be generalized to such a setting.
\end{remark}

\begin{remark}
This theory can also be used to help identify subschemes of a quasi-projective variety $X$ that are compatibly split with a given Frobenius splitting.  In particular, suppose that $\phi : F^e_* \O_X \rightarrow \O_X$ is a Frobenius splitting.  We can then associate a divisor $\Delta_{\phi}$ to $\phi$.  Any center of log canonicity of the pair $(X, \Delta)$ is a center of sharp $F$-purity, see \cite{SchwedeCentersOfFPurity}, and thus the associated scheme is compatibly split with $\phi$.
\end{remark}

\begin{question}
Suppose that $R$ is a normal $\bQ$-Gorenstein ring of finite type over a field of characteristic zero and that $Q \in \Spec R$ is a center of log canonicity.  Further suppose that $R_Q$ is log canonical and that, when reduced to characteristic $p \gg 0$ (or perhaps to infinitely many $p > 0$), $({R_p})_{Q_p}$ is $F$-pure.  Then for each $p \gg 0$, we can associate a (canonically defined) $\Delta_{Q_p}$ on $R_p / Q_p$.  We then ask whether or not $\Delta_{Q_p}$ is reduced from some $\bQ$-divisor $\Delta$ on $R$?
\end{question}

\begin{question}
Is there a characteristic zero analog of $\tau_{b}(R, \nsubseteq Q; \Delta)$?  Takagi has considered similar questions, see \cite[Conjecture 2.8]{TakagiHigherDimensionalAdjoint}.  One possible analog is something along the following lines:
For a log resolution $\pi : \tld X \rightarrow X = \Spec R$ of $(R, \Delta)$, let $E = \sum E_i$ be the sum of divisors $E_i$ of $\tld X$ (exceptional or not) such that $Q \in \pi(E_i)$ and such that the discrepancy of $(R, \Delta)$ along $E_i$ is $\leq -1$.  Then consider the ideal
\[
\pi_* \O_{\tld X}(\lceil K_{\tld X} - \pi^* (K_X + \Delta) + \epsilon \sum E_i\rceil) \text{ for $\epsilon > 0$ sufficiently small.}
\]
Is it possible that this coincides with $\tau_{b}(R, \nsubseteq Q; \Delta)$ for infinitely many $p > 0$?  Also compare with \cite{FujinoNonLCSheaves}.
\end{question}

Finally, we consider the non-local setting.

\begin{remark}
\label{RemarkGlobalGluing}
Suppose that $(X, \Delta)$ is a pair where $X$ is a (possibly proper) variety of finite type over an $F$-finite field $k$.  In particular, we know that $(F^e)^! \omega_X$ can itself be identified with $\omega_X$; see Remark \ref{RemHaveToBeCareful}.  Further suppose that $K_X + \Delta$ is $\bQ$-Cartier with index not divisible by $p > 0$.  Now suppose that $W \subset X$ is a normal closed variety defined by an ideal sheaf $I_W$ which is locally $F$-compatible with respect to $\Delta$.  Then on a sufficiently fine affine cover $U_i$ of $X$, we can associate $\bQ$-divisors $\Delta_{W_i}$ on $W_i = U_i \cap W$.  It is easy to see that these divisors agree on overlaps since they were canonically determined.  Therefore, there is a $\bQ$-divisor $\Delta_W$ on $W$ determined by $(X, \Delta)$.

Furthermore, we claim that
\begin{equation}
(p^e - 1)( K_X + \Delta)|_W \sim  (p^e - 1)(K_W + \Delta_W).
\end{equation}


One way to see this is to work globally (in particular, partially globalize Theorems \ref{TheoremMapsInduceDivisors} and \ref{TheoremDivisorsInduceMaps}).  More precisely, there is a bijection of sets:
\begin{equation}
\label{EqnGlobalBijection}
\left\{ \begin{matrix}\text{Effective $\bQ$-divisors $\Delta$ on $X$ such}\\\text{that $(p^e - 1)(K_X + \Delta)$ is Cartier}\end{matrix} \right\} \leftrightarrow \left\{ \begin{matrix}\text{Line bundles $\sL$ and non-zero }\\ \text{elements of $\Hom_{\O_X}(F^e_* \sL, \O_X)$} \end{matrix}\right\} \Big/ \sim
\end{equation}
The equivalence relation on the right side identifies two maps $\phi_1: F^e_* \sL_1 \rightarrow \O_X$ and $\phi_2 : F^e_* \sL_2 \rightarrow \O_X$ if there is an isomorphism $\gamma : \sL_1 \rightarrow \sL_2$ and a commutative diagram:
\begin{equation*}
\begin{split}
\xymatrix{
 F^e_* \sL_1 \ar[d]_{F^e_* \gamma} \ar[r]^{\phi_1} & \O_X \ar[d]^{\text{id}} \\
 F^e_* \sL_2 \ar[r]^{\phi_2} & \O_X \\
}
\end{split}
\end{equation*}
We sketch the correspondence for the convenience of the reader.  Given $\Delta$, set $\sL = \O_X((1-p^e)(K_X + \Delta))$.  Then observe that
\[
\sHom_{\O_X}(F^e_*\sL,\O_X) \cong F_*^e\sHom_{\O_X}(\sL,\O_X((1-p^e)K_X)) \cong F^e_* \O_X((p^e - 1)\Delta).
\]
We can choose a global section of $\O_X((p^e - 1)\Delta)$ corresponding to the effective integral divisor $(p^e - 1)\Delta$ (up to multiplication by a unit).  This section may be viewed as a map $\phi_{\Delta} : F^e_* \sL \rightarrow \O_X$ by the above isomorphism.  For the converse direction, given such a $\phi$ we obtain a global section of $F^e_* \sL^{-1}((1-p^e)K_X)$.  This corresponds to an effective divisor $D$.  Set $\Delta_{\phi} = {1 \over p^e - 1} D$.  Again, as mentioned before, this is simply the globalized version of Theorems \ref{TheoremMapsInduceDivisors} and \ref{TheoremDivisorsInduceMaps}.

Now, since $I_W$ is locally $F$-compatible with respect to $\Delta$, we have that $\phi_{\Delta}(F^e_* I_W \sL) \subseteq I_W$.  By restriction, we obtain a map $\phi_W : \sL|_W \rightarrow \O_W$.  It is then clear that $\O_X( (p^e - 1)( K_X + \Delta) )|_W$ is isomorphic to $\O_W( (p^e - 1)(K_W + \Delta_W))$ as desired.
\end{remark}


\def\cprime{$'$} \def\cprime{$'$}
  \def\cfudot#1{\ifmmode\setbox7\hbox{$\accent"5E#1$}\else
  \setbox7\hbox{\accent"5E#1}\penalty 10000\relax\fi\raise 1\ht7
  \hbox{\raise.1ex\hbox to 1\wd7{\hss.\hss}}\penalty 10000 \hskip-1\wd7\penalty
  10000\box7}
\providecommand{\bysame}{\leavevmode\hbox to3em{\hrulefill}\thinspace}
\providecommand{\MR}{\relax\ifhmode\unskip\space\fi MR}
\providecommand{\MRhref}[2]{%
  \href{http://www.ams.org/mathscinet-getitem?mr=#1}{#2}
}
\providecommand{\href}[2]{#2}

\end{document}